\newlength{\minipagewidth}
\newcommand{\pN}[1]{{#1}}
\newcommand{\opt}[1]{\widetilde{#1}}
\newcommand{\cc}{{\cal C}_2}
\newcommand{\al}{\mathcal{A}}
\newcommand{\E}{\mathbb{E}}
\newcommand{\R}{\mathbb{R}}
\renewcommand{\S}{\mathbb{S}}
\renewcommand{\P}{\mathbb{P}}
\renewcommand{\L}{\mathbb{L}}
\newcommand{\calL}{\mathcal{L}}
\newcommand{\calP}{\mathcal{P}}
\newcommand{\calN}{\mathcal{N}}
\newcommand{\calR}{\mathcal{R}}
\newcommand{\calW}{\mathcal{W}}
\newcommand{\eps}{\varepsilon}
\newcommand{\ph}{\varphi}
\newcommand{\Span}{\mathrm{Span} }
\newcommand{\Tr}{\mathrm{Tr}}
\newcommand{\Id}{\mathrm{Id}}
\newcommand{\Law}{{\rm Law}}
\newcommand{\limop}[1]{\mathop{ {\rm #1} }\limits}
\newcommand{\e}{{\rm e}}
\renewcommand{\d}{ {\rm d}}
\newcommand{\as}{\quad {\rm a.s.}}
\newcommand{\eqdef}{ \mathop{=}^{{\rm def}} }
\newcommand{\one}[1]{ {\rm l} \hspace{-.7 mm} {\rm l}_{ #1}   }
\newcommand{\dps}{\displaystyle}
\newcommand{\fracd}[2]{\frac{\dps #1}{\dps #2}}
\newcommand{\cond}[1]{  { \, \left | \, #1 \right .} }
\newcommand{\st}{ \, | \, }
\newcommand{\abs}[1]{\left | #1\right |}
\newcommand{\set}[1]{\left\{#1\right\}}
\newcommand{\pare}[1]{ \left(#1\right) }
\newcommand{\p}[1]{ \left(#1\right) }
\newcommand{\bracket}[1]{\left \langle #1\right \rangle}
\newcommand{\barr}[1]{\left. \begin{array}{#1}}
\newcommand{\earr}{\end{array}\right.}
\newcommand{\bmat}{\begin{pmatrix}}
\newcommand{\emat}{\end{pmatrix}}
\theoremstyle{plain}
\newtheorem{The}{Theorem}[section]
\newtheorem{Lem}[The]{Lemma}
\newtheorem{Pro}[The]{Proposition}
\newtheorem{Def}[The]{Definition}
\numberwithin{equation}{section}
\theoremstyle{definition}
\newtheorem{Rem}[The]{Remark}
\title{A $N$-uniform quantitative Tanaka's theorem for the conservative Kac's $N$-particle system with Maxwell molecules}
\author{Mathias Rousset\footnote{Supported in part by {\it ERC MSMath}. {\it AMS 2000 subject classifications:} Primary 60J27; secondary 65C40. {\it Keywords}: trend to equilibrium, Markov process, Kac's particle system, coupling. } \\
{\it CERMICS } \\
{\it INRIA Paris - Rocquencourt, Ecole des Ponts Paristech, \& Université Paris-Est. }
}
\begin{document}

\maketitle

\begin{abstract}
This paper considers the space homogenous Boltzmann equation with Maxwell molecules and arbitrary angular distribution. Following Kac's program, emphasis is laid on the the associated conservative Kac's stochastic $N$-particle system, a Markov process with binary collisions conserving energy and total momentum. An explicit Markov coupling (a probabilistic, Markovian coupling of two copies of the process) is constructed, using simultaneous collisions, and parallel coupling of each binary random collision on the sphere of collisional directions. The euclidean distance between the two coupled systems is almost surely decreasing with respect to time, and the associated quadratic coupling creation (the time variation of the averaged squared coupling distance) is computed explicitly. Then, a family (indexed by $\delta > 0$) of $N$-uniform ``weak'' coupling / coupling creation inequalities are proven, that leads to a $N$-uniform power law trend to equilibrium of order ${\sim}_{ t \to + \infty} t^{-\delta} $, with constants depending on moments of the velocity distributions strictly greater than $2(1 + \delta)$. The case of order $4$ moment is treated explicitly, achieving Kac's program without any chaos propagation analysis. Finally, two counter-examples are suggested indicating that the method: (i) requires the dependance on $>2$-moments, and (ii) cannot provide contractivity in quadratic Wasserstein distance in any case.
\end{abstract}

\paragraph{Foreword} This paper is the rewritten, submitted version of the preliminary version: M. Rousset, {\it Scalable and Quasi-Contractive Markov Coupling of Maxwell Collision} also available on arXiv. The latter preliminary version is \emph{not} to be published.

\tableofcontents

\section*{Introduction}
\paragraph{Kac's particle system}~

The Kac's conservative $N$-particle system is a Markov stochastic process describing the evolution of the velocities in $(\R^d)^N$ of $N$ particles subject to random binary elastic collisions. The latter satisfies: (i) independence with respect to possible positions of particles (space homogeneity); (ii) conservation of momentum and kinetic energy (elasticity). Assuming propagation of chaos (asymptotic independence of subsets of particles), the formal $N \to \infty$ limit of the probability distribution of a single velocity satisfies the classical Boltzmann kinetic non-linear equation, in its space homogenous simplified form. When the collision rate is constant, in particular independent of the relative speed of particle pairs, we speak of \emph{Maxwell molecules}. 

A classical problem consists in quantifying the speed at which the probability distribution of the latter process converges towards its large time limit. The latter limiting, invariant probability, is the uniform distribution on the sphere $\S^{Nd-d-1}$ defined by the conservation of momentum and energy. 

\paragraph{Context}~

Let us recall standard strategies for probability flows solutions of evolution (linear or non-linear) equations defined by a reversible Markovian mechanism.

\begin{enumerate}[(i)]
\item  The \emph{(relative) entropy dissipation} method. Denote $t \mapsto \pi_t$ a probability flow with state space $E$, expected to converge to $\pi_\infty$. The entropy method computes the variation of the relative entropy
\[
\frac{\d }{\d t} \underbrace{ \int_E \frac{\d \pi_t }{\d \pi_\infty} \ln \frac{\d \pi_t }{\d \pi_\infty}  \d \pi_\infty }_{E(\pi_t)}= - D(\pi_t) \leq 0,
\]
and try to obtain exponential convergence to equilibrium by obtaining a so-called \emph{modified log-Sobolev inequality} of the form
\begin{equation}
  \label{eq:mLSI}
  E(\pi) \leq \frac{1}{2c_{\rm ls}} D(\pi) \qquad \forall \pi \in  \calP(E),
\end{equation}
for some constant $c_{\rm ls} >0$. When $t \mapsto \pi_t$ is the distribution flow of a reversible diffusion on a Riemannian manifold, the famous curvature condition $CD(c_{\rm cd},\infty)$ of Bakry and Emery (a mixture of strong convexity of the diffusion drift's potential, and uniform positive curvature of the metric, with lower bound $c_{\rm cd} > 0$, see~\cite{BakGenLed14} and references therein) yields such an exponential convergence by proving the inequality $0 < c_{\rm cd} \leq c_{\rm ls} $. The latter is obtained using the inequality
\[
\frac{\d}{\d t} D(\pi_t) \leq - 2 c_{\rm cd} D(\pi_t),
\]
and integrating through time. This topic has received considerable interest recently, due to the following gradient's flow interpretation: the probability flow of a reversible diffusion on a manifold is in fact a gradient flow of the relative entropy $E(\pi)$ with respect to the probability metric given by the quadratic Wasserstein distance $W_2$ (see the monographs~\cite{Vil08,AmbGigSav06}). This has led to the interpretation of $c_{\rm cd}$ has a uniform displacement convexity constant, and yielded a conceptual explanation for the inequality $0 < c_{\rm cd} \leq c_{\rm ls} $.

\item The weaker ($0 < c_{\rm ls} \leq c_{\rm sg} $) \emph{spectral gap} method which computes
\[
\frac{\d }{\d t} \underbrace{ \int_E \p{ 1-\frac{\d \pi_t }{\d \pi_\infty} }^2\d \pi_\infty }_{E_2(\pi_t)} = - D_2(\pi_t) \leq 0 ,
\]
and try to obtain exponential convergence to equilibrium by obtaining a so-called \emph{spectral gap inequality} of the form
\[
E_2(\pi) \leq \frac{1}{c_{\rm sg}} D_2(\pi) \qquad \forall \pi \in  \calP(E).
\]
When $t \mapsto \pi_t$ is the flow of a reversible Markov process, the latter is indeed the spectral gap of $D_2$ (the so-called Dirichlet form) seen as a self-adjoint operator in $\L_2(E,\pi_\infty)$.
\item The \emph{Markov coupling} method, which amount to construct an explicit Markov coupling, a probabilistic coupling of two copies of the Markov process of interest which is itself again Markov:
\[
t \mapsto (U_t,V_t) \in E \times E.
\]
If the latter coupling contracts with respect to some distance in an average $L_p$ sense:
\[
\frac{\d}{\d t} \E \p{ d(U_t,V_t)^p }^{1/p} \leq - c_{p} \, \E \p{ d(U_t,V_t)^p }^{1/p}
\] 
for any initial condition, then the method yields an upper bound on the contractivity (with constant $ c_{w_p} \geq c_{ p } > 0$)  with respect to the related probability Wasserstein distance $W_p$.  Exponential trend to equilibrium follows. Here again, for reversible diffusion on a manifold, the curvature condition $CD(c_{\rm cd},\infty)$ is typically required (see \cite{Wan97} and references therein) to construct such a { \bf contractive coupling using parallel transport}. Using again the gradient's flow interpretation, the $CD(c_{\rm cd},\infty)$ condition is essentially equivalent to contractivity in quadratic Wasserstein distance $c_{\rm cd} = c_{w_2}$, (see for instance~\cite{RenStu05,AmbGigSav06}).
\end{enumerate}
\begin{Rem}~
  \begin{enumerate}[(i)]
  \item Up to our knowledge, there is no such settled general theory for jump processes, yielding inequalities analogous to $c_{\rm cd}=c_{w_2} \leq c_{\rm ls}$, (see however~\cite{Oll09,Maa11} and related papers for recent approaches). This is of importance in our context, since space homogenous Boltzmann's collisions are reversible {\bf jump } processes, and an the type of angular distribution (``small jumps'') is known to influence the trend to equilibrium (see discussion below).
\item In the context of $N$-particle systems, such methods may be used either on the particle system (and one may look for $N$-uniform constants), or directly on the non-linear mean-field (here, kinetic) equation, seen as the formal $N=+\infty$ case.
\item In practice, some more or less weakened versions of the above inequalities, especially of the modified log-Sobolev (``entropy / entropy dissipation'') inequality are obtained. They are of the form:
\begin{equation}
  \label{eq:mLSI_weak}
\boxed{
  E(\pi)^{1+1/\delta} \leq \frac{1}{2c_{{\rm ls},\delta}(\pi) } D(\pi)\qquad \forall \pi \in  \calP(E),
}
\end{equation}
and yields { \bf algebraic or power law trends} of order $t^{-\delta}$, for $\delta \in ]0,+\infty]$ ($\delta=+\infty$ formally stands for the exponential case). $c_{{\rm ls},\delta}(\pi)$ is typically {\bf dependent on  moments } of $\pi$. A priori moment propagation estimates on the probability flow have to be obtained separately in order to get quantitative convergence to equilibrium. In particular, in kinetic theory, ``Cercignani's conjecture'' refers to the case $c_{{\rm ls},\delta=+\infty}(\pi) > 0$, where the type of dependence with respect to $\pi$ (moments, regularity) is known to be propagated by the probability flow. Usually, probabilists speak of modified log-Sobolev inequalities when $c_{{\rm ls}}$ is unconditionally bounded below (independent of $\pi$).
\end{enumerate}
\end{Rem}

The mathematical literature studying the convergence to equilibrium of the space homogenous Boltzmann kinetic equation, and its related Kac's conservative $N$-particle system is extremely vast, and we refer to the classical reviews~\cite{Cer69,Vil02}. In the present work, the so-called Boltzmann collision kernel (the Markov generator defining random collisions on the sphere defined by the conservation laws of a pair velocities) will be denoted with Carleman's representation
\begin{equation}\label{eq:kernel}
\boxed{
  b( v-v_\ast , \d n'_v ) \equiv {\rm unif}_{\theta}(n_v, \d n'_v) \beta(\abs{v-v_\ast} , \d \theta ),
}
\end{equation}
where in the above $(v,v_\ast)\in \R^d \times \R^d$ are the incoming (pre-collisional) velocities of a pair particle, $(n_v,n'_v) = \pare{\frac{v-v_\ast}{\abs{v-v_\ast}},\frac{v'-v'_\ast}{\abs{v'-v'_\ast}}} \in \S^{d-1} \times \S^{d-1} $ are respectively the pre- and post-collisional directions, $n_v \cdot n'_v = \cos \theta$ defines the scattering angle $\theta \in [0, \pi]$, and ${\rm unif}_{\theta}(n_v, \d n'_v)$ is the uniform probability distribution on the sphere defined by a prescribed scattering angle. In~\cite{DesMouVill11}, a review is provided about the different types of collisions classifying the possible large time behaviors. Following their convention, one can introduce two parameters $(\gamma,\nu) \in [-d, +\infty [ \times ]- \infty ,2]$ and assume that
\[
\beta(\abs{v-v_\ast} , \d \theta ) \limop{\sim} \abs{v-v_\ast}^{\gamma} \theta^{-\nu-1} \d \theta,
\] 
in the limit where $ \abs{v-v_\ast} \to +\infty $, as well as $\theta \to 0^+$. From physical scattering theory, the case $ \gamma < 0$ is often called ''soft potential'', while $\gamma > 0$ is called ''hard potential'' and $\gamma = 0 $ is called ``Maxwell molecules''. The case $\nu \leq 0$ corresponds usually to bounded kernels and is called ``angular cut-off''. The case $\nu \in ]0,2[$ corresponds to Levy generators associated to fractional pseudo-differential operators, while formally the case $\nu =2$ is the diffusive case, the Boltzmann operator becoming proportional to the Laplace-Beltrami operator on the sphere of collisional directions, also called the ``Landau operator''. Finally, we also mention the important Kac's ``caricature'' case, where $d=1$ and momentum is not conserved; the latter is usually considered with Maxwell molecules and angular cut-off: $\nu <0, \gamma =0$.

\paragraph{Literature}~

\emph{Entropy method, $N=+\infty$.} First, the most studied method for trend to equilibrium in kinetic theory is by far the entropy method, in the case of the kinetic ($N=+\infty$) equation. Some famous counterexamples (see \cite{Bob88,BobCer99,Vil03}) have shown that a weak entropy-entropy dissipation inequality of the form~\eqref{eq:mLSI_weak} (called ``Cercignani's conjecture'' in kinetic theory) cannot not hold for $\delta=+\infty$ ({\it i.e. } $c_{{\rm ls},\delta=+\infty}(\pi)=0$), even when restricting to reasonable conditions on $\pi$ (moments, regularity,...). This counter-example contains several physically realistic collisions (for instance $\nu < 0, \gamma < 2$, which includes Maxwell molecules in the non-diffusive case, of interest here). In fact, it has been conjectured in~\cite{DesMouVill11} from rigorous proofs in meaningful particular examples (for instance the Landau case  $\gamma = 0, \nu = 2$ in~\cite{DesVil00}), that a necessary and sufficient criteria such that a modified log-Sobolev inequality holds is the following: $c_{ {\rm ls} } > 0 \Leftrightarrow \gamma + \nu^+ \geq 2 $. The latter suggests a common contribution of the probability of high energy collisions ($\gamma$ large), and of small scattering angle ($\nu$ large). 

Meanwhile, many studies have been developped in the cases where exponential entropy convergence is known to fail, say $c_{\rm ls} = 0$. Some weakened versions of the ``entropy / entropy dissipation'' analysis of the form~\eqref{eq:mLSI_weak} (here is a sample: \cite{Cer82,CarCar92,BobCer99,TosVil99_trend,Vil03}) in order to obtain algebraic or power law trends with some a priori estimates on $\pi$ that has to be obtained seperately. 

These facts {\bf motivates the power-law behavior and moment dependence in the Maxwell case with angular cut-off $(\gamma = 0, \nu < 0)$, which are obtained in the present paper}.

\emph{Spectral gap and Wild's expansion, $N=+\infty$.}
For the case of interest in the present paper (Maxwell molecules, jump kernels: $(\gamma = 0, \nu < 0)$), an expansion method, known as Wild's expansion, enables to give precise estimates using some refined form of the central limit theorem. It has been shown in~\cite{CarLu03}, that arbitrary high moments of a velocity distribution necessarily lead to arbitrary slow decay to equilibrium (in $L^1$). In~\cite{CarLu03,DolGabReg09,DolReg10,DolReg12} a full theory of convergence to equilibrium for Maxwell molecules is then developed using Wild's method, showing that the convergence is essentially exponential with rate given by the spectral gap, but requires some moment and regularity condition on the initial condition, and a constant which is sub-optimal for short time.

In~\cite{Mou06}, the case of hard potentials is treated with a spectral method that essentially prove exponential convergence with rate given by the spectral gap of the linearized near equilibrium equation, and rely on moment creation in the case of hard potentials.

These facts motivates moment dependence found in the present paper, but suggests that the associated power law behavior is sub-optimal for very large times.

\emph{Spectral gap method $N<+\infty$.} Direct studies of the trend of equilibrium of the Kac's $N$-particle system have been undertaken~\cite{DiaSal00,CarCarLos03,CarCarLos08,Oli09}. The main striking feature of the latter list is the { \bf difficulty to achieve the so-called ``Kac's program'' for large time behavior} (see~\cite{MisMou11} ): obtaining a scalable ($N$-uniform) analysis of the trend to equilibrium of the $N$-particle system. A famous result (see ~\cite{CarCarLos03,CarCarLos08}) exactly computes the \emph{spectral gap} for Maxwell molecules ($\gamma = 0$), and proves that the latter is $N$-uniform ($\lim_{N \to + \infty} c_{N, {\rm sg}} >0$). However, the $L_2((\R^d)^N,\pi_\infty)$-norm used in the spectral gap case, is usually thought to be an unsatisfactory $N$-scalable measure of trend to equilibrium (it is rather associated to the \emph{linearized} kinetic equation for $N=+\infty$, see the last section of~\cite{Vil03} for a longer discussion). By extensivity of entropy, the modified log-Sobolev constant $c_{N,{\rm ls}}$ is believed to be a more reliable quantity.

\emph{Entropy method, $N<+\infty$.} According to~\cite{Vil03}, it is conjectured (and partially proven in the case of Kac's caricature) that the modified log-Sobolev constant of Kac's $N$-particle system, at least for $\nu < 0 $, is of order $c_{N, {\rm ls}} \sim N^{-1 + \gamma/2}$. This is of special interest in the (unphysical) case $\gamma=2$, since it shows that a modified log-Sobolev inequality holds, similarly to the kinetic equation ($N=+\infty$). In general (and in particular for the diffusive, Landau case with Maxwell molecules, or more generally for $ \gamma + \nu^+ \geq 2 $), the large $N$ behavior of the modified log-Sobolev constant of the Kac's particle system is an open problem.

\emph{Coupling method, $N \leq +\infty$.} The use of explicit coupling methods to study the trend to equilibrium of Markov processes (or Markov chains) is now a classical topic on its own, especially for discrete models (see \textit{e.g.} the classical textbook \cite{LevPerWil09}). It is also a well-established topic for continuous models, as well as for non-linear partial differential equations that have an interpretation in terms of a Markovian mechanism. For the granular media equation (diffusive particles interacting through a smooth pairwise potential), and its related $N$-particle system, Markov coupling can give exponential trend to equilibrium, by using a ``strong coupling/coupling creation inequality'' (see for instance \cite{Mal01,BolGenGui12,BolGenGui12bis}, using $CD(c_{N,{\rm cd}},\infty)$-type convexity assumptions on potentials, with $ \lim_{N \to + \infty} c_{N, {\rm cd}} >0$). For the Kac's $N$-particle system of kinetic theory, the only paper known to us quantitatively using a Markov coupling is in~\cite{Oli09}. In the latter, the (almost optimal, and not $N$-uniform) { \bf estimate ($c_{N,{ \rm w2}} \sim 1/(N\ln N)$) is obtained for Kac's caricature}, in accordance with the result cited in~\cite{Vil03}: $c_{N, {\rm ls}} \sim 1/N $.

\emph{Quantitative propagation of chaos.} Finally in~\cite{MisMou11}, the authors have reversed the point of view of Kac's program, and proved indirectly the trend to equilibrium of Kac's $N$-particle system by pulling-back the long time stability of the kinetic (mean-field $N=+\infty$ limit) equation using uniform in time propagation of chaos.

\paragraph{Motivation of the paper.}~

The goal of this paper is to develop on the Kac's particle system with Maxwell molecules a {\bf ``weak approach'' of the (quadratic) coupling method, uniformly in the number of particles $N$}. The latter results extend in spirit the classical paper by~Tanaka \cite{Tan78}, where the quadratic Wasserstein distance between the solution of the kinetic equation with Maxwell collisions and the equlibrium Gaussian distribution (the Mawellian) is shown to be decreasing through time, with a similar coupling argument, but without quantitative analysis. In a sense, the analysis in the present paper makes Tanaka's argument quantitative (with respect to time), and available for the Kac's $N$-particle system.

More precisely, we will obtain power law trends to equilibrium with respect to a permutation symmetrized version of the quadratic Wasserstein distance, and upon estimates on higher moments of the velocity distribution. {\bf Up to our knowledge, this is the first time this type of estimate is obtained directly on the Kac's particle systems}. Such results are similar in spirit to the classical results mentioned above (\cite{Cer82,CarCar92,BobCer99,TosVil99_trend,Vil03}) that are obtained with entropy methods using weakened ``entropy/entropy creation'' inequalities. Yet, the coupling method has some noticeable specificities:
\begin{enumerate}[(i)]
\item{\it Maxwell restriction.} Because it requires coupling of simultaneous collisions, the analysis is restricted to Maxwell molecules ($ \gamma = 0$). 
\item { \emph{Angular condition}} The analysis is independent of the scattering angular distribution $ \nu \in ] - \infty , 2]$.
\item { \emph{Particle system size}} The analysis is independent of the particle system size $N$. It can work similarly for the kinetic equation $N=+\infty$.
\item { \emph{A priori estimates}} The analysis depends only on higher $>2$ moments of velocity distributions, and { \bf not on regularity estimates}. Moments are known to exactly propagates through time (finite and infinite moments remain so, see the classical paper~\cite{IkeTru56}) for the $N=+\infty$ kinetic equation with Maxwell molecules, but unfortunately, not so directly for the Kac's $N$-particle system. We will use the easiest case of order $4$ moments.
\item{ \emph{Constants}} Constants explicitable.
\end{enumerate}

We finally also suggest some negative results in the form of two counterexamples to stronger versions of ``coupling/coupling creation inequalities''. Although similar in spirit, the latter have a different interpretation as compared to the counterexamples to Cercignani's conjecture (see~\cite{Bob88,Vil03}) in the entropy context:  they provide information on the limitation of the specific choice of the coupling, but not directly on the trend to equilibrium of the model. Here are the counterexamples:
\begin{enumerate}[(i)]
\item Velocity distributions with sufficiently heavy tails can make the coupling creation vanish. This first counterexample shows that the obtained ``coupling/coupling creation inequality'' \emph{must} involve some higher order (say, $>2$) velocity distribution moments.
\item There exists a continuous perturbation of the identity coupling at equilibrium for which however the coupling creation is sub-linearly smaller than the coupling itself. This second type of counterexample shows that even with moment restrictions, a \emph{sub-exponential trend is unavoidable}.
\end{enumerate}
As discussed above, the latter facts are consistent with known power law behaviors depending on moment conditions (see again~\cite{CarLu03}) in the angular cut-off case $(\nu <0, \gamma=0)$. However the sub-optimality of the considered coupling may be conjectured in the diffusive Landau case $(\nu = 2, \gamma =0)$, where exponential convergence with a log-Sobolev gap ($c_{\rm ls} > 0$) is known to occur (see again~\cite{DesVil00}).

\paragraph{Summary of results}~

Here, and in the rest of the paper, the following notation is used
\[
\boxed{
\bracket{o\pare{\pN{u},\pN{v},\pN{u}_{\ast },\pN{v}_{\ast}} }_{N} \eqdef \fracd{1}{N^2} \sum_{n_1 , n_2 =1}^{N} o(\pN{u}_{(n_1)},\pN{u}_{(n_1)},\pN{u}_{(n_2)},\pN{v}_{(n_2)}),
}
\]
in order to account for averages over particles of a two-body observable $o: \pare{\R^d\times \R^d}^2 \to \R$. 

The Markov coupling of Kac's conservative $N$-particle system is a Markov process denoted
\begin{equation*}
  t \mapsto (\pN{U}_t,\pN{V}_t)\equiv (U_{t,(1)},V_{t,(1)}, \hdots,U_{t,(N)},V_{t,(N)}) \in \pare{ \R^{d} \times \R^{d} }^N .
\end{equation*}
$\pN{U}_t \in \pare{ \R^{d} }^N$ and $\pN{V}_t \in \pare{ \R^{d} }^N$ both satisfying the same normalized conservation laws: 
\begin{equation}
  \label{eq:cons}
  \begin{cases}
 \dps    \bracket{ V_t }_N = 0 \as \qquad &[\text{centered momenta}] \\[5pt]
\dps \bracket{ \abs{ V_t }^2 }_N = 1 \as. \qquad & [\text{normalized energy}]
  \end{cases}
\end{equation}
Throughout the paper, we will denote the associated probability distributions of the particle system:
\[
\pi_t \eqdef \Law\p{V_t} \in \calP\p{(\R^d)^N}, \quad \pi_{c,t} \eqdef \Law \p{ \pN{U}_t,\pN{V}_t } \in \calP \p{ \pare{ \R^{d} \times \R^d }^N } ,
\]
and assume that $t \mapsto U_t$ is always taken to be distirbuted according to the equilibrium stationary distribution, given by the uniform distribution on the sphere defined by the conservation laws:
\[
\Law\p{U_t} = \pi_\infty = {\rm unif}_{\S^{Nd-d-1}}.
\]

We also assume that the Kac's $N$-particle systems are constructed from a Boltzmann kernel of the type~\eqref{eq:kernel} with { \bf Maxwell molecules } ($\gamma = 0$), and we will use Levy's normalization on scattering angular distribution
\begin{equation}
  \label{eq:Levy}
  \int_{0}^{\pi} \sin^2 \theta \beta(\d \theta ) = 1 .
\end{equation}

The Markov dynamics of the coupled particle system can be described without ambiguity with the related \emph{ master equation }
\begin{equation*}
  \label{eq:master_c}
  \frac{\d}{\d t} \pi_{c,t} = \calL^{\ast}_{c,N} \pi_{c,t},
\end{equation*}
where $*$ refers to duality between measures and test functions. In the above, the dynamics generator is of the form
\begin{equation}
  \label{eq:full_gen_c}
  \calL_{c,N} \eqdef \fracd{1}{2 N} \sum_{1 \leq n \neq m \leq N } L_{c,(n,m)} ,
\end{equation}
where the two-body coupled generator $L_{c,(n,m)}$ is a { \bf coupled } collision operator acting on the particle pair $(n,m)$ for $n \neq m$, and defined when acting on test functions $\psi \in C^\infty_c(\p{ \R^d \times \R^d }^2)$ by:
\begin{equation}
  \label{eq:coupled_gen}
L_c(\psi)(u,v) \eqdef  \int_{ \S^{d-1} \times [0,\pi]}  \pare{ \psi(u',v') - \psi(u,v) } {\rm unif}_{c,\theta}(n_u,n_v, \d n'_u \d n'_v) \,  \beta(\d \theta ).
\end{equation}
In the above, the parallel spherical coupling ${\rm unif}_{c,\theta}$ is precisely defined in point $(iii)$ of Definition~\ref{def:simparcoupl} below. 
\begin{Rem}~

  \begin{enumerate}[(i)]
  \item Due to the lack of smoothness of ${\rm unif}_{c,\theta} \p{n_u,n_v; \d n'_u \d n'_v}$ (there is a singularity on the extremity set $\set{n_u,n_v \in \S^{d-1} \vert n_u=- n_v}$), there is a difficulty to define rigorously the coupled system without angular cut-off:
\[
b_0 = \int_{[0,\pi]} \beta(\d \theta) < + \infty.
\] 
However, the analysis of the present paper is independent of the latter cut-off value $b_0$, and the proofs will be carried out for arbitrary angular singularity, using a continuity argument.
\item By construction, such contracting Markov couplings { \bf cannot satisfy detailed balance } (time symmetry is broken to obtain a contractive map) on the product space~$\L^2(( \R^d \times \R^d)^N)$, so that there is no simple way (at least known to us) to write a dual (to the master equation~\eqref{eq:coupled_gen}) kinetic equation on the density of $\pi_{c,t}$.
  \end{enumerate}  
\end{Rem}

The latter coupling can be defined without ambiguity by requiring { \bf simultaneous collisions, and parallel coupling of each collision }. This is specified using the following set of rules.
\begin{Def}[Simultaneous parallel coupling]\label{def:simparcoupl} The Simultaneous Parallel Coupling between $t \mapsto U_t$ and $t \mapsto V_t$ is obtained by the following set of rules:
\begin{enumerate}[(i)]
\item Collision times and collisional particles are the same (simultaneous collisions), as implied by~\eqref{eq:full_gen_c}.
\item For each collision, the scattering angles  $\theta \in [0,\pi]$ of are the same, as implied by~\eqref{eq:coupled_gen}.
\item For each coupled collision, the post-collisional directions $n'_u \in \S^{d-1}$ and $n'_v \in \S^{d-1}$ are coupled using the elementary rotation along the great circle (the geodesic) of $\S^{d-1}$ joining $n_u$ and $n_v$. The resulting coupled probability is denoted 
\[
{\rm unif}_{c,\theta} \p{n_u,n_v; \d n'_u \d n'_v}.
\]
\end{enumerate}
\end{Def}

Since the post-collisional directions $(n'_u,n'_v)$ are obtained using {\bf parallel coupling on a sphere}, a strictly (under the crucial assumption that { \bf $d \geq 3$}) positively curved manifold, the latter coupling is bound to be almost surely decreasing, in the sense that for any initial condition and $0 \leq t \leq t+h$
\begin{equation*}
  \bracket{ \abs{\pN{U}_{t+h}-\pN{V}_{t+h}}^2}_N \leq \bracket{ \abs{\pN{U}_{t}-\pN{V}_{t}}^2}_N \as.
\end{equation*}

We then compute the \emph{quadratic coupling creation} defined by
\begin{equation*}
  \fracd{\d}{\d t} \E  \bracket{ \abs{\pN{U}_{t}-\pN{V}_{t}}^2}_N = - \E  \p{  \cc \pare{ \pN{U}_t , \pN{V}_t } } \leq 0,
\end{equation*}
where the ``two-body coupling creation'' functional satisfies
\begin{align}\label{eq:coupl_crea}
\boxed{
\cc(u,v) =  \fracd{d-2}{2d-2}  \bracket{ \abs{u-u_\ast}\abs{v-v_\ast} - (u-u_\ast ) \cdot (v-v_\ast) }_N \geq 0,
}
\end{align}
and can be interpreted as a degree of alignement between the velocity difference $v-v_\ast \in \R^d$, and its coupled counterpart $u-u_\ast \in \R^d$.

In order to relate the coupling and the coupling creation, we have introduced in the present paper an original general sharp inequaIity, proved using brute force calculation: for any vectors $\pN{u} \in (\R^d)^N$ and $\pN{v}  \in (\R^d)^N$ both satisfying the normalized conservation laws~\eqref{eq:cons}, it holds
\begin{empheq}[box=\fbox]{align}\label{eq:fund_ineq_uv}
& 1 - \bracket{\pN{u} \cdot \pN{v}}^2_N \leq \min\pare{\kappa_{\bracket{\pN{u}\otimes \pN{u}}_N},\kappa_{\bracket{\pN{v}\otimes \pN{v}}_N} }  \nonumber \\
& \qquad 
\times \bracket{ \abs{\pN{u}-\pN{u}_\ast}^2\abs{\pN{v}-\pN{v}_\ast}^2 - \pare{\pare{\pN{u}-\pN{u}_\ast} \cdot \pare{\pN{v}-\pN{v}_\ast}}^2}_N, \quad \forall \pN{u},\pN{v}\in \S^{Nd-d-1} . 
  \end{empheq}
 In the above, the spectral quantity
\begin{equation}
  \label{eq:kappa}
   \kappa_{S} \eqdef \pare{1- \lambda_{\max}(S)}^{-1} \in [\frac{d}{d-1},+\infty]
\end{equation}
is defined with the spectral radius $\lambda_{\max}(S) \leq 1$ of a positive trace $1$ symmetric matrix. It is finite if and only if $S$ is of rank at least $2$ (non-alignement condition). Note that if $\bracket{\pN{u} \cdot \pN{v}}_N \geq 0$, then the coupling distance satisfies $\bracket{ \abs{ \pN{u} - \pN{v} }^2}_N \leq 2 \p{  1 - \bracket{\pN{u} \cdot \pN{v}}^2_N }$. The equality case in~\eqref{eq:fund_ineq_uv} is achieved (sharpness) under some strong isotropy and co-linear coupling conditions, detailed in Section~\ref{sub:spec}.
\begin{Rem}
  The inequality~\eqref{eq:fund_ineq_uv} can be interpreted as a way to bounde from above the euclidean distance $\bracket{ \abs{ \pN{u} - \pN{v} }^2}_N$ with a quadratic average of {\bf alignement} between the velocity difference $v-v_\ast \in \R^d$ and $u-u_\ast \in \R^d$.
\end{Rem}

It is then of interest to compare that the alignement functional in the right hand side of~\eqref{eq:fund_ineq_uv}, and the coupling creation functional~\eqref{eq:coupl_crea}. They differ by a weight of the form $\abs{u-u_\ast}\abs{v-v_\ast}$ which implies that the strong ``coupling/coupling creation'' constant
\[
 c_{2,N} \eqdef \inf_{ \pN{u},\pN{v}\in \S^{Nd-d-1}   } \frac{\cc \p{ \pN{u},\pN{v} } }{ 2\bracket{\abs{\pN{u}-\pN{v}}^2}_N } 
\]
is degenerated when the number of particles becom large: $\lim_{N \to + \infty} c_{2,N} = 0$ (see the counterexamples of Section~\ref{sec:cex} for more details). However, a direct Hölder inequality yields some weaker power law versions (see details in Section~\ref{sec:results}) for any $\delta >0$,  of the form
\begin{equation}
  \label{eq:rate}
\boxed{
 c(\delta,u,v) \leq \frac{\cc \p{ \pN{u},\pN{v} } }{ 2\bracket{\abs{\pN{u}-\pN{v}}^2}_N^{1+1/2\delta} },
}
\end{equation} 
where the constant $c\p{\delta,\pN{u},\pN{v}}$ can be lower bounded by \emph{($N$-averaged) moments} of the velocity distributions, of any order strictly greater then $2(1 + \delta)$. Additional control on the positive correlation condtion $\bracket{\pN{u} \cdot \pN{v}}_N \geq 0$, and on the isotropy of $\bracket{\pN{u}\otimes \pN{u}}_N$ are required by the inequality~\eqref{eq:fund_ineq_uv}. 

Upon a priori control of such moments, this leads to a { \bf power law trend to equilibrium of the Kac's system distribution}, of order $\limop{\sim}_{t \to + \infty} t^{-\delta}$. The trend to equilibrium is obtained with respect to a permutation symmetrized quadratic Wasserstein distance defined by the quotient distance on $(\R^d)^N /  {\rm Sym }_N$: $d_{\rm sym}(u,v) \eqdef \inf_{\sigma \in {\rm Sym}_N} \pare{\bracket{ \abs{\pN{u}-\pN{v}_{\sigma(\,.\,)} }^2}_N}^{1/2}$. The latter is natural for exchangeable distributions, and necessary to handle the positive correlation assumption $\bracket{ u \cdot v }_N \geq 0 $.

The case of order $4$-moments is finally treated explicitly, and sub-linear trends are estimated. For any $0 < \delta < 1$, we prove that:
\[
\boxed{
d_{{\rm sym},W_2}\p{\pi_t,\pi_\infty}  \leq \p{ d_{{\rm sym},W_2}\p{\pi_0,\pi_\infty}^{-1/\delta} + c_\delta \p{t-t_\ast}^+   }^{-\delta},
}
\]
where the cut-off time satisfies depends logarithmically on the initial order $4$:
\[
t_\ast = 2 \p{\ln \p{ \frac{d}{d+2} \E\bracket{\abs{V_0}^4}_N - 1}  }^+.
\]
and $c_\delta > 0$ is explicitly computable. For instance, we find that
\[
\lim_{\delta \to 1}\lim_{d \to +\infty} \lim_{N\to + \infty} c_{\delta,N} \geq 10^{-3},
\]
which although sub-optimal, is not unreasonably small.

\paragraph{Contents}~

In Section~\ref{sec:not_res}, we recall some notation and basic concepts related to kinetic theory and probabilistic couplings for Markov particle systems. We then detail the results of the present work.

In Section~\ref{sec:coupl}, the parallel, spherical coupling of interest is detailed, together with the precise calculation of the associated quadratic coupling creation.

In Section~\ref{sec:spec}, the special inequality between coupling distance and colinearity of coupled pairs is proven. 

In Section~\ref{sec:proofs}, some details of proofs are given.

\section{Notation and precise results}\label{sec:not_res}

\subsection{Kinetic theory}
As usual, the velocities of a pair of collisional particles are denoted
\[(v,v_{\ast}) \in \R^d\times \R^d,\] 
and the post-collisional quantities are denoted by adding the superscrpit~$'$. All particles are assumed to have the same mass so that the conservation of momentum imposes
\begin{equation*}
  v'+v'_\ast = v+v_\ast ,
\end{equation*}
and conservation of energy imposes
\begin{equation*}
   \abs{v'}^2+\abs{v_\ast'}^2=\abs{v}^2+\abs{v_\ast}^2.
\end{equation*}
As a consequence, the relative speed is also conserved
\begin{equation*}
   \abs{v'-v_\ast'}=\abs{v-v_\ast}.
\end{equation*}
The post-collisional velocities of a particle pair are thus given by the standard collision mapping
\begin{equation}
\label{eq:collision}
    \begin{cases}
      v' = \frac{1}{2}(v+v_\ast) + \frac{1}{2} \abs{v-v_\ast} n'_v, \\[2pt]
      v'_\ast = \frac{1}{2}(v+v_\ast) - \frac{1}{2} \abs{v-v_\ast} n'_v,
    \end{cases}
  \end{equation}
where
\[
(n_v,n'_v) = \pare{\frac{v-v_\ast}{\abs{v-v_\ast}},\frac{v'-v'_\ast}{\abs{v'-v'_\ast}}} \in \S^{d-1} \times \S^{d-1}
\] 
denote the pre-collisional/post-collisional directions. The \emph{scattering or deviation angle} $ \theta \in [0,\pi] $ of the collision is then uniquely defined as the half-line angle between the pre-collisional and the post-collisional directions:
\begin{equation*}
  \label{eq:devangle}
  \cos \theta \eqdef n'_v \cdot n_v.
\end{equation*}
The binary collisions are then specified by the following operator (a Markov generator) acting on test functions $\ph \in C^{\infty}_c((\R^d)^2)$:
\begin{equation}
   \label{eq:gen_levy}
   L(\ph)(v,v_\ast) \eqdef \int_{\S^{d-1} \times [0,\pi]} \pare{\ph(v',v'_\ast) - \ph(v,v_\ast)}  b(v-v_\ast , \d n'_v ) ,
 \end{equation}
where in the above the \emph{Boltzmann collision kernel} $b$ can be decomposed as
\[
b(v-v_\ast , \d n'_v ) \eqdef \int_{\theta \in [0,\pi]} {\rm unif}_{\theta}(n_v, \d n'_v) \, \beta(\abs{v-v_\ast},\d \theta),
\] 
with (i) $\beta(\abs{v-v_\ast}, \d \theta)$ the \emph{angular collisional kernel}, a positive measure on $[0,\pi]$ satisfying the Levy normalization condition~\eqref{eq:Levy}, and (ii) $ {\rm unif}_{\theta}$ is the uniform probability distribution on the sphere of collisional directions $\S^{d-1}$ with prescribed scattering (or deviation) angle $\theta$. More formally: 
\begin{equation}
  \label{eq:rand_rot}
  {\rm unif}_{\theta}(n_v, \d n'_v) \eqdef {\rm unif}_{\set{n'_v \in \S^{d-1} \st n_v \cdot n'_v = \cos \theta }}\pare{ \d n'_v },
\end{equation}
where ${\rm unif}_S$ denotes the uniform probability distribution on a sphere $S$ in euclidean space. The introduction of ${\rm unif}_{\theta}$ will be convenient to describe the coupled collision ${\rm unif}_{c,\theta}$.

By construction, ${\rm unif}_\theta$ satisfies the \emph{detailed balance condition} (micro-reversibility) with invariant probability the uniform distribution on the sphere $\S^{d-1}$. Formally:
\begin{equation*}
  \label{eq:rev}
\d n_v {\rm unif}_{\theta} ( n_v , \d n'_v ) = \d n_v '  {\rm unif}_{\theta} ( n'_v , \d n_v ) \in \calP\pare{\S^{d-1} \times \S^{d-1} },
\end{equation*}
where we implicitly define $\d n_v = {\rm unif}_{\S^{d-1}}( \d n_v )$. It is convenient to keep in mind that~\eqref{eq:rev} extends by measure decomposition to the following version of detailed balance in the euclidean ambient space
\[
\d v \d v_\ast {\rm unif}_{\theta} ( n_v , \d n'_v ) = \d v' \d v'_\ast  {\rm unif}_{\theta} ( n'_v , \d n_v )
\] 
as (unbounded) positive measures in $\R^d \times \R^d$; by a tensorization argument, the latter yields \emph{reversibility} (see below) of the conservative Kac's particle system.

The conservative Kac's $N$-particle system is then defined as a Markov process
\begin{equation}
  \label{eq:part_sys_nocoupled}
 t \mapsto \pN{V}_t \equiv (\pN{V}_{t,(1)}, \hdots,\pN{V}_{t,(N)}) \in \pare{ \R^{d} }^N ,
\end{equation}
whose probability distribution is described without ambiguity with the related master equation
\begin{equation}
  \label{eq:master}
  \frac{\d}{\d t} \pi_t = \calL^{\ast}_N \pi_t,
\end{equation}
holding on the probability distribution flow:
\begin{equation}\label{eq:p_flow}
  \pi_t  \equiv \pi_t^{N}\pare{ \d v_{(1)} \ldots \d v_{(N)} } \eqdef \Law( V_t \equiv \pare{V_{t,(N)},\ldots ,V_{t,(N)}}  ) \in \calP_{\rm sym}((\R^d)^N) \quad t \geq 0,
\end{equation}
where $\calP_{\rm sym}((\R^d)^N)$ denotes permutation symmetric probability distributions (assuming the initial condition $\pi_0$ is already permutation symmetric). Each of the $V_{t,(n)}, 1 \leq n \leq N$ represent the velocity of a physical particle, the whole system of particles being subject to the random binary elastic collisions.

For arbitrary $N \geq 2$, the Markov generator in $(\R^d)^N$ have the following structure:
\begin{equation}
  \label{eq:full_gen}
  \calL_N \eqdef \fracd{1}{2N} \sum_{1 \leq n \neq m \leq N} L_{(n,m)} ,
\end{equation}
where $L_{(n,m)}$ is the Markov generator~\eqref{eq:gen_levy} with state space $ \pare{ \R^{d} \times \R^{d}}^2$, the subscript $(n,m)$ denoting the action on the corresponding pair of particles. The $\frac{1}{N}$ scaling in~\eqref{eq:full_gen} can be physically understood by stating that each individual particle is subject to a collision mechanism with $O(1)$ rate and a uniformly picked other particle. For elastic collisions, we have that $L( \ph)(v,v_\ast = v) = 0$, so that if we consider test functions $\psi \in C^\infty_c((\R^d)^N)$ of average type:
\[
\psi(v) = \bracket{\ph(v)}_N,
\]
we get, thanks to the factor $1/2N$ in~\eqref{eq:full_gen}:
\begin{align*}
  \calL \psi (v) & = \frac{1}{2}  \bracket{ L \p{(v,v_\ast) \mapsto \ph(v) + \ph(v_\ast)}  }_N. \\
 & = \bracket{  L \p{ \ph \otimes \one{} }(v,v_\ast) }_N .
\end{align*}

By construction, the process~\eqref{eq:part_sys_nocoupled} satisfies the physical conservation laws of momentum and energy, that will be taken centered and normalized according to~\eqref{eq:cons} throughout the paper. Moreover, the fundamental detailed balance condition~\eqref{eq:rev} implies detailed balance at the level of the particle system. More precisely:
\begin{enumerate}[(i)]
\item The unique stationary probability distributions is given by the sphere of conservation laws:
\[
\pi_\infty = {\rm unif}_{\S^{d(N-1)-1}} ( \d v_{(1)} \ldots\d v_{(N)} ) \in \calP((\R^d)^N);
\]
where we have implicitly define the unit sphere with normalization condition~\eqref{eq:cons}
\[
\S^{d(N-1)-1} = \set{ v \in (\R^d)^N \st \bracket{v}_N = 0 , \,  \bracket{ \abs{ v }^2 }_N  = 1 }.
\]
\item $\pi_{\infty}$ is in fact an \emph{equilibrium}, in the sense that the process in stationary distribution is \emph{time reversible}
\[
\Law\p{ V_{0}} = \pi_\infty \Rightarrow \Law\p{ V_{t}, 0 \leq t \leq T} = \Law\p{ V_{T-t}, 0 \leq t \leq T} \quad \forall T >0 .
\]
\item Equivalently to~$(ii)$, on has $\calL^{\ast}_N = \calL_N$ in the sense of self-adjointness in the Hilbert space $\L^2\pare{(\R^d)^N, \d v_{(1)} \ldots\d v_{(N)} }$, or alternatively in $\L^2\pare{\S^{d(N-1)-1}}$.
\end{enumerate}

Let us also recall that the latter process can be constructed explicitly in the case of Maxwell molecules ($\beta(\abs{v-v_\ast}, \d \theta) \equiv \beta(\d \theta)$), and angular cut-off ($\int_{[0,\pi]} \beta( \d \theta) <+ \infty $):
\begin{enumerate}[(i)]
\item Each particle perform a collision with a fixed rate $b_0 := \int_{[0,\pi]} \beta( \d \theta) $, and with a uniformly randomly chosen other particle.
\item The scattering angle of each collision is independently sampled according to the probability defined by $\beta(\d \theta )/b_0$.
\item The random post-collisional directions $n'_v$ is uniformly sampled with ${\rm unif}_{\theta}$ and scattering angle prescribed by $(ii)$.
\end{enumerate}
The general angular collisions can then be obtained (rigorously, see~\cite{EthKur85}) as the limit of the latter.

If we denote the probability $\pi_t$ as a (generalized) probability density function with reference measure $\d v_{(1)} \ldots \d v_{(N)}$
\[
\pi_t \equiv f_t( v_{(1)} \ldots v_{(N)} ) \d v_{(1)} \ldots \d v_{(N)},
\]
then the detailed balance conditions yields the usual explicit \emph{dual kinetic equation}, for any $v \in (\R^d)^N$:

\begin{align*}
  \fracd{\d}{\d t} f_t(v) & =\fracd{1}{2 N} \sum_{n,m =1}^{N} \int_{\S^{d-1} \times [0,\pi]} \pare{f_t(v^{'}) - f_t(v)}  b\p{ n_{v_{(n,m)} } , \d n'_{v_{(n,m)}} } \nonumber \\
& = \calL_{N} f_t (v),
\end{align*}
where in the above, the subscript $v_{(n,m)} =(v_{(n)},v_{(m)}) \in \R^d \times \R^d$ refers to the corresponding pair of particles.

Finally, one says that weak propagation of chaos holds, if for any time $t \geq 0$, the marginal distribution of $k$ given particles of the above particle system ($k$ being fixed) is converging (in probability distribution) to a product measure when $N \to + \infty$ (independence). Under this assumption, the large $N$ limit of the \emph{one body} marginal distribution $\pi_t \in \calP(\R^d)$ of the particle system satisfies an evolution equation in closed form with a quadratic non-linearity given by:
\begin{equation}
  \label{eq:non-lin}
  \fracd{\d}{\d t} \int_{\R^d} \ph \, \d \pi_t = \int_{\R^d \times \R^d} L \pare{ \ph \otimes \one{} } \d \pi_t \otimes \d \pi_t,
\end{equation}
where in the above $\ph $ is a test function of $\R^d$. The latter can be easily derived from the master equation~\eqref{eq:master}, by choosing tests functions in $( \R^d)^N$ of the form $\bracket{\ph(v)}_N$ with $\ph \in C^\infty_c(\R^d)$. The dual kinetic equation of the non-linear equation~\eqref{eq:non-lin} is the famous Boltzmann equation in $\R^d$ with Maxwell collision kernel $b$. The usual expression on the one particle velocity density, denoted $\dps f_t(v) \d v \equiv \pi_t( \d v ) $, is then:
\begin{equation}
  \label{eq:boltz}
  \fracd{\d}{\d t} f_t(v) = \int_{\R^d \times \S^{d-1} } \pare{f_t(v')f_t(v'_\ast) - f_t(v)f_t(v_\ast)} \d v_\ast \, b(n_v , \d n'_{v}) .
\end{equation}

\subsection{Coupling}

Let $(E,d)$ denote a Polish state space ($E :=\p{\R^d}^N$ euclidean with $d(u,v) := \bracket{\abs{u-v}^2}_N $ in the present paper). We say that a time-homogenous Markov process in the product space
\[
t \mapsto (U_t,V_t) \in E \times E,
\]
is a \emph{Markov coupling}, if the marginal probability distribution of the two processes $t \mapsto U_t \in E $ and $t \mapsto V_t \in E $ are two instances of the same Markov dynamics, with possibly different initial distributions. We will be interested in weakly contracting couplings, where the coupling distance is almost surely decreasing:

\begin{equation}
  \label{eq:coupl_mon}
 d(U_{t+h},V_{t+h}) \leq d(U_{t},V_{t}) \as, \quad \forall t,h \geq 0,
\end{equation}
and will especially consider \emph{quadratic coupling creation} defined by:
\begin{equation*}
  \label{}
\cc(u,v) \eqdef - \left . \frac{\d}{ \d t} \right |_{t=0} \E_{(U_0,V_0) = (u,v)} \pare{ d(U_{t},V_{t})^2 } \geq 0.
\end{equation*}
From an analytic point of view, if $\calL_c$ denotes the Markov generator of the coupled process $t \mapsto (U_t,V_t) $, and $\calL$ the Markov generator of the marginal process $t \mapsto U_t$ (or $t \mapsto V_t$) it is useful to keep in mind that:

\begin{enumerate}[(i)]
\item Coupling amounts to consider the compatibility condition: for any $(u,v) \in E \times E$, and $\ph$ a test function:
\begin{equation*}
  \label{}
\calL_c(\ph \otimes \one{} )(u,v) = \calL(\ph)(u), \quad  \calL_c(\one{} \otimes \ph )(u,v) = \calL(\ph)(v).
\end{equation*}
\item Couplings generators $\calL_c$ invariant by permutation of the role of the two variables $(u,v) \in E^2$ are called ``symmetric'' (if $\ph(u,v)=\ph(v,u)$, then $\calL_c(\ph)(u,v) = \calL_c(\ph)(v,u)$). We will only use symmetric couplings, although this fact is unimportant in the analysis.
\item The quadratic coupling creation functional can be defined as
\begin{equation}
  \label{eq:cc_def}
\cc(u,v) \eqdef   - \calL_c\pare{d^2}(u,v).
\end{equation}
\end{enumerate}
In the most favorable situation, one can expect a contractive coupling / coupling creation inequality with constant $0 < c_{2} < + \infty$:
\begin{equation}
  \label{eq:cc_contr}
  d(u,v)^2  \leq \frac{1}{2c_{2} }  \cc(u,v) , \quad \forall (u,v) \in E^2.
\end{equation}
The latter leads to contractivity with respect to the quadratic Wasserstein distance ($c_2 \leq c_{w_2}$ with the notation of the introduction):
\begin{equation}
  \label{eq:wass_contr}
  d_{W_2}\pare{\pi_{1,t}, \pi_{2,t}} \leq d_{W_2}\pare{\pi_{1,0}, \pi_{2,0}} {\rm e}^{- c_{2} t },
\end{equation}
where in the above $t \mapsto (\pi_{1,t}, \pi_{2,t}) $ are two probability flows solution of the master equation
\begin{equation*}
  \frac{\d}{ \d t} \pi_t = \calL^\ast \pi_t,
\end{equation*}
and the quadratic Wasserstein distance is as usual defined by
\begin{equation}
  \label{eq:wass_def}
  d_{W_2}\pare{\pi_{1}, \pi_{2}} \eqdef \inf_{\pi \in \Pi(\pi_{1}, \pi_{2})} \pare{ \int_{E^2} d(u,v)^2 \pi(\d u , \d v) }^{1/2},
\end{equation}
 $\Pi(\pi_{1}, \pi_{2})$ denoting the set of all possible couplings with marginal distributions $\pi_1$ and $\pi_2$. Finally, if $\pi_\infty \in \calP(E)$ denotes a stationary probability distribution for $\calL$, then~\eqref{eq:wass_contr} yields exponential convergence of the flow $t \mapsto \pi_t$ towards $\pi_\infty$ with respect to Wasserstein distance.

In the context of the present paper, contractivity estimates as~\eqref{eq:wass_contr} are too strong too hold.  We will seek for a \emph{power law trend to equilibrium} in the form
\[
\frac{\d^+}{\d t} d_{W_2}\p{\pi_t,\pi_\infty} \leq - c_\delta \p{\pi_t} d_{W_2}\p{\pi_t,\pi_\infty}^{1+1/\delta},
\]
or equivalently
\[
d_{W_2}\p{\pi_t,\pi_\infty}  \leq \p{ d_{W_2}\p{\pi_0,\pi_\infty}^{-1/\delta} + \frac{1}{\delta}\int_{0}^t c_\delta \p{\pi_s} \d s   }^{-\delta}.
\]
where we denote
\[
\frac{\d^+}{\d t} x \eqdef \liminf_{h \to 0^+} \frac{x_{t+h} - x_t }{h}.
\]

\begin{Rem}~
  \begin{enumerate}[(i)]
  \item The limit $\delta \to + \infty$ gives back the exponential trend~\eqref{eq:wass_contr}.
\item The present paper will compute precise estimates of $c_\delta(\pi)$ in terms of moments of $\pi$ of order $2q(1+\delta)$, for any $q >1$.
  \end{enumerate}
\end{Rem}

Consider now the case of an exchangeable (particle permutation symmetric) $N$-particle system as a random vector $U \in (\R^d)^N$, where $\R^d$ is euclidean. Strictly speaking, the state space is obtained by quotienting out the symmetric group ${\rm Sym}_N$:
\[
E := (\R^d)^N / {\rm Sym}_N,
\]
or equivalently considering the subset of empirical distributions:
\[
E := \calP_N(\R^d) = \set{ \pi \in \calP(\R^d) \st \exists u \in  (\R^d)^N, \quad \pi =    \frac{1}{N} \sum_{n=1}^{N} \delta_{u_{(n)}} }.
\]
The former can be endowed with the associated orbifold distance
\[
d_{\rm sym}(u,v) \eqdef \inf_{\sigma \in {\rm Sym}_N} \pare{\bracket{ \abs{\pN{u}-\pN{v}_{\sigma(\,.\,)} }^2}_N}^{1/2} ,
\]
which is by definition equivalently the quadratic Wasserstein distance induced by $\calP(\R^d)$:
\[
d_{\rm sym}(u,v) = d_{W_2, \calP_N} \pare{ \frac{1}{N} \sum_{n=1}^{N} \delta_{u_{(n)}} , \frac{1}{N} \sum_{n=1}^{N} \delta_{v_{(n)}}  }.
\]
This leads to the following definition.
\begin{Def}
Let $\calP_{\rm sym}((\R^d)^N) \simeq \calP((\R^d)^N / {\rm Sym}_N)$ the set of symmetric (exchangeable) probabilities of $(\R^d)^N$. The ``two-step'' or ``symmetric'' quadratic Wasserstein distance on $\calP_{\rm sym}((\R^d)^N)$ denoted $d_{W_2, {\rm sym}}$ is defined as the usual quadratic Wasserstein distance~\eqref{eq:wass_def} on the quotient space $(\R^d)^N / {\rm Sym}_N$ endowed with the distance $d_{\rm sym}$.\end{Def}

However, in the present paper, the Markov couplings of two particle systems in $(\R^d)^N$ \emph{won't} be constructed on the product space
\[
\pare{ \R^{d} }^N \! \!  / {\rm Sym}_N \times \pare{ \R^{d} }^N \! \!  / {\rm Sym}_N,
\]
but on the non-quotiented space $\pare{ \R^{d} \times \R^{d} }^N$. The generator $\calL_{c,N}$ of the coupled system conserve permutation invariance only \emph{globally}, and the exchangeability of particle will be broken at the initial coupling (see the proof in Section~\eqref{sub:proof_main}). This corresponds to the intuitive picture of pairing particles of two exchangeable sets once and for all. 

More precisely, we will use the symmetrized Wasserstein distance by picking an initial condition as follows
 \begin{Lem}\label{lem:couplrep}
   Let $\pi_1,\pi_2 \in \calP_{\rm sym} \p{(\R^d)^N}$ be two exchangeable probabilities. Then there exists a random variable representation $(U_0,V_0, \Sigma) \in (\R^d \times \R^d)^N \times {\rm Sym}_N$ with $\Law(U_0)=\pi_1$, $\Law({V_0}) = \pi_2$ such that:
   \begin{enumerate}[(i)]
   \item $d_{W_2, {\rm sym}}(\pi_1,\pi_2)^2 = \E \bracket{ \abs{ U_0-V_{0,\Sigma \p{.}} } ^2}_N $.
   \item If $V_0$ is almost surely centered ($\bracket{V_0}_N=0 \as $), then $\bracket{ U_0 \cdot V_{0, \Sigma \p{.} } }_N \geq 0 \as $.
   \end{enumerate}
 \end{Lem}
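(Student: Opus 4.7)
The plan is to produce the triple by lifting an optimal Wasserstein coupling from the quotient space $E := (\R^d)^N / {\rm Sym}_N$ back to $(\R^d)^N\times(\R^d)^N$, and then choosing $\Sigma$ as a measurable realization of the orbit minimization. Since $(E,d_{\rm sym})$ is Polish and $d_{W_2,{\rm sym}}$ is, by construction, the quadratic Wasserstein distance on $E$ between the quotient pushforwards $\bar\pi_1,\bar\pi_2$ of $\pi_1,\pi_2$, standard existence on Polish spaces supplies an optimal $\bar\pi\in\Pi(\bar\pi_1,\bar\pi_2)$ attaining $\int d_{\rm sym}^2\,\d\bar\pi = d_{W_2,{\rm sym}}(\pi_1,\pi_2)^2$ whenever the distance is finite (the infinite case being vacuous).

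The lift goes as follows. Fix Borel sections $s_i:E\to(\R^d)^N$, draw $(\bar U,\bar V)\sim\bar\pi$ together with two independent uniform random permutations $\Sigma',\Sigma''\in{\rm Sym}_N$, and set $U_0 := \Sigma'\cdot s_1(\bar U)$, $V_0 := \Sigma''\cdot s_2(\bar V)$. Conditionally on $\bar U$, the variable $U_0$ is uniform on the ${\rm Sym}_N$-orbit of $s_1(\bar U)$; since $\pi_1$ is the unique exchangeable law with quotient $\bar\pi_1$, one concludes $U_0\sim\pi_1$, and similarly $V_0\sim\pi_2$. Permutation-invariance of $d_{\rm sym}$ preserves the cost, $\E\, d_{\rm sym}(U_0,V_0)^2 = d_{W_2,{\rm sym}}(\pi_1,\pi_2)^2$. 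Finally I pick $\Sigma(u,v)$ as a Borel measurable selection of $\arg\min_{\sigma\in{\rm Sym}_N}\bracket{\abs{u-v_{\sigma(\cdot)}}^2}_N$ (trivial since ${\rm Sym}_N$ is finite, e.g.\ by lexicographic tie-breaking). By construction $\bracket{\abs{U_0-V_{0,\Sigma(\cdot)}}^2}_N = d_{\rm sym}(U_0,V_0)^2$ almost surely, and taking expectation gives (i).

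For (ii), expanding the square and using $\bracket{\abs{v_{\sigma(\cdot)}}^2}_N = \bracket{\abs{v}^2}_N$ for every $\sigma$ shows that the distance-minimizing $\Sigma$ is equivalently the maximizer of the functional $\sigma\mapsto\bracket{u\cdot v_{\sigma(\cdot)}}_N$. Averaging this functional over $\sigma$ uniformly distributed on ${\rm Sym}_N$ yields $\bracket{u}_N\cdot\bracket{v}_N$, which vanishes under the centering hypothesis $\bracket{V_0}_N = 0$. Hence the maximum, and therefore the selected value $\bracket{U_0\cdot V_{0,\Sigma(\cdot)}}_N$, is almost surely at least $0$, proving (ii). The only technicalities to watch for are the routine Borel measurability of the sections $s_i$ and of the $\arg\min$ selection, and the existence of quadratic Wasserstein optimizers on a Polish space; all are standard because ${\rm Sym}_N$ is a finite group acting with finite orbits, so no genuine obstacle is anticipated.
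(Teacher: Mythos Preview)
Your proof is correct and follows essentially the same route as the paper: existence of an optimal coupling on the Polish quotient $((\R^d)^N/{\rm Sym}_N,d_{\rm sym})$, lifting to exchangeable representatives by uniform randomization over ${\rm Sym}_N$, and choosing $\Sigma$ as an $\arg\min$ over the finite group; for (ii), both arguments observe that the permutation average of $\bracket{u\cdot v_{\sigma(\cdot)}}_N$ vanishes under centering, so the maximum is nonnegative. You are simply more explicit about the measurability of sections and the $\arg\min$ selection, which the paper leaves implicit.
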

 \begin{proof}
  First, $(i)$. Since ${\rm Sym}_N$ is finite, $\p{ \p{\R^{d} }^N / {\rm Sym_N}, d_{\rm sym}}$ is a Polish quotient metric space, so that existence of an optimal coupling is known to hold (see \cite{Vil08}). Then an exchangeable representative $V_0$ of the quotient can be picked uniformly at random, and then $\Sigma $ can be defined such that:
\[
\bracket{ \abs{ U-V_{\Sigma \p{.}} }^2}_N =  \inf_{\sigma \in {\rm Sym}_N} \bracket{ \abs{\pN{U}_0-\pN{V}_{0,\sigma(\,.\,)} }^2}_N   \as.
\]

Second, $(ii)$.  By the centering assumption on $V_0$:
\[
\frac{1}{ N ! } \sum_{\sigma \in {\rm Sym}_N} \bracket{  \pN{U}_0 \cdot \pN{V}_{0,\sigma(\,.\,)} }_N = 0,
\]
and the result follows by definition of quotient metric $d_{\rm sym}$.
 \end{proof}

\subsection{Results}\label{sec:results}
We can now detail the results of the present paper.

We first give the special inequality that will enable to derive coupling / coupling creation inequalities.

\begin{The}\label{the:fund_ineq}
Denote $ \kappa_{S}\eqdef \pare{1- \lambda_{\rm max}(S)}^{-1} \in [d/(d-1),+\infty] $ where $\lambda_{\rm max}(S)$ is the maximal eigenvalue of a trace~$1$ symmetric positive matrix $S$. Let $(U,V) \in \R^d \times \R^d$ be a couple of centered and normalized (with $\E \abs{U}^2 =\E \abs{V}^2 =1$) random variables in euclidean space. Let $(U_\ast,V_\ast)\in \R^d \times \R^d$ be an i.i.d. copy. Then the following inequality holds:
\begin{align}\label{eq:fund_ineq}
&1-\E \p{U \cdot V}^2  \nonumber \\
&\hspace{1cm} \leq \min\pare{ \kappa_{\E\pare{U\otimes U}},  \kappa_{\E\pare{V\otimes V}} }  \E \pare{ \abs{U-U_\ast}^2\abs{V-V_\ast}^2 - \pare{\pare{U-U_\ast} \cdot \pare{V-V_\ast}}^2}.
\end{align}
Note that $ \min\pare{ \kappa_{\E\pare{U\otimes U}},  \kappa_{\E\pare{V\otimes V}} } < + \infty $ if and only if either $E\pare{U\otimes U}$ or $E\pare{V \otimes V}$ have rank at least $2$ ({\it i.e.} are not degenerate on a line).

Moreover, a sufficient condition for the equality case in~\eqref{eq:fund_ineq} is given by the following isotropy and co-linear coupling conditions
\begin{enumerate}[(i)]
\item $
\frac{U}{\abs{U}} = \frac{V}{\abs{V}} \quad \as.
$
\item Either $\E\pare{U\otimes V} = \E\pare{U\otimes U}=\frac{1}{d} \Id$ or $\E\pare{U\otimes V} = \E\pare{V\otimes V}=\frac{1}{d} \Id$. 
\end{enumerate}
\end{The}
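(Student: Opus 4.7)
My plan rests on an $L^2$-orthogonal decomposition of $V$ against $U$. Set $c := \E(U \cdot V)$ and $W := V - c U$. Then $\E W = 0$, $\E(U \cdot W) = 0$, and by the Pythagorean identity $\E|W|^2 = 1 - c^2$, which is exactly the left-hand side of the inequality. By bilinearity of the wedge, $(U - U_\ast) \wedge (V - V_\ast) = (U - U_\ast) \wedge (W - W_\ast)$, since $c\,(U-U_\ast) \wedge (U-U_\ast) = 0$. So the inequality reduces to the cleaner claim $\E|W|^2 \leq \kappa_S \cdot \E|(U - U_\ast) \wedge (W - W_\ast)|^2$, with $W$ orthogonal to $U$ in $L^2$.

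Next, I would expand the coupling wedge. Writing $A = U \wedge W$, $B = U \wedge W_\ast$, $C = U_\ast \wedge W$, $A^\ast = U_\ast \wedge W_\ast$, we have $(U - U_\ast) \wedge (W - W_\ast) = A - B - C + A^\ast$. All six pairwise cross-expectations that mix the two independent pairs vanish because either $\E U_\ast = 0$ or $\E W_\ast = 0$ factors out. The surviving terms are $\E|A|^2 = \E|U \wedge W|^2$, $\E|B|^2 = \E|C|^2 = \E|W|^2 - \Tr(S T')$, $\E(A \cdot A^\ast) = |\E(U \wedge W)|^2 = \|R'\|_F^2 - \Tr(R'^2)$, and $\E(B \cdot C) = \|R'\|_F^2 - (\Tr R')^2 = \|R'\|_F^2$ (using $\Tr R' = 0$), where $S = \E U \otimes U$, $T' = \E W \otimes W$, $R' = \E U \otimes W$. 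Collecting,
\[
\E|(U-U_\ast)\wedge(W-W_\ast)|^2 = 2\,\E|U \wedge W|^2 + 2\bigl(\E|W|^2 - \Tr(S T')\bigr) + 2\bigl(\|R'\|_F^2 - \Tr(R'^2)\bigr) + 2\|R'\|_F^2.
\]
All terms except the middle one are manifestly non-negative: the first by the pointwise Lagrange identity, and $\|R'\|_F^2 - \Tr(R'^2) = 2\|R'^a\|_F^2 \geq 0$ where $R'^a$ is the antisymmetric part.

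The spectral heart is then the classical PSD trace bound $\Tr(S T') = \Tr(T'^{1/2} S T'^{1/2}) \leq \lambda_{\max}(S)\,\Tr(T') = \lambda_{\max}(S)\,\E|W|^2$, obtained from the operator inequality $S \preceq \lambda_{\max}(S)\,\Id$. Dropping the non-negative terms in the previous display gives $\E|(U - U_\ast) \wedge (W - W_\ast)|^2 \geq 2(1 - \lambda_{\max}(S))\, \E|W|^2$, which is the $\kappa_S$ half of the bound (in fact with a factor $2$ to spare). The symmetric decomposition swapping the roles of $U$ and $V$ yields the $\kappa_T$ half, so one finishes by taking the minimum.

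The main obstacle is the meticulous bookkeeping of cross-terms in the wedge expansion — this is the "brute force calculation" alluded to in the paper's introduction; one must carefully verify that every cross-expectation mixing the independent pairs factors through a zero mean. The sufficient equality case is essentially trivial: conditions (i) and (ii) together force both $\E(|V|/|U|) = 1$ and $\E(|V|/|U|)^2 = 1$ (using $\E U \otimes V = \E U \otimes U$ tested against $S$, combined with $\E|V|^2 = \E|U|^2 = 1$), hence $|V|/|U| = 1$ almost surely. Combined with colinearity this gives $V = U$ a.s., so $W = 0$ and both sides of the inequality vanish identically.
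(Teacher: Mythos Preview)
Your argument for the inequality is correct and follows a genuinely different route from the paper's. The paper does not orthogonalize; instead, in Lemma~\ref{lem:eq_al} it expands $\E\bigl(|U-U_\ast|^2|V-V_\ast|^2 - ((U-U_\ast)\cdot(V-V_\ast))^2\bigr)$ directly in the four covariance blocks $C_{U,U}, C_{V,V}, C_{U,V}, C_{V,U}$, obtaining three non-negative summands. The spectral step is then isolated as a separate trace inequality (Lemma~\ref{lem:trace}): working in the eigenbasis of $C_{U,U}$ and applying a weighted Cauchy--Schwarz, one shows $\Tr(C_{U,U}C_{V,V}) - \Tr(C_{U,V}C_{V,U}) \leq \lambda_{\max}(C_{U,U})\bigl(1 - (\Tr C_{U,V})^2\bigr)$. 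Your orthogonal decomposition $V = cU + W$ buys two simplifications: the left-hand side becomes simply $\E|W|^2$, and the bare operator bound $\Tr(ST') \leq \lambda_{\max}(S)\Tr(T')$ suffices in place of the Cauchy--Schwarz argument. The paper's direct expansion, on the other hand, keeps the $U\leftrightarrow V$ symmetry manifest throughout and makes the non-trivial equality case more transparent. Both arguments produce the inequality with the same factor~$2$ to spare.

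On the equality case: your observation that conditions (i) and (ii) \emph{as literally written} force $V = U$ a.s.\ is correct, and then both sides vanish. Your intermediate claim ``$\E(|V|/|U|) = 1$'' is slightly off, though --- what (ii) actually gives is $\E(|U||V|) = \Tr C_{U,V} = 1$, and then Cauchy--Schwarz against $\E|U|^2 = \E|V|^2 = 1$ yields $|U| = |V|$ a.s. Be aware, however, that the paper's own discussion (the second Remark after Lemma~\ref{lem:eq_al}) identifies a broader, \emph{non-trivial} equality family: strongly isotropic co-linear couplings in which $(|U|,|V|)$ is independent of the common uniformly distributed direction, with only $C_{U,U} = \tfrac{1}{d}\Id$ required and $C_{U,V}$ merely proportional (not equal) to $\tfrac{1}{d}\Id$. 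Under those weaker hypotheses the two sides are equal but non-zero; your orthogonalized framework hides this because it makes the equality case look forced to $W = 0$.
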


\begin{Rem}~
  \begin{enumerate}[(i)]
  \item If $\E( U \cdot V ) \geq 0$ (positive correlation condition), then 
    \begin{equation*}
      \frac12 \E \p{ \abs{U-V}^2 } \leq 1-\E \p{U \cdot V}^2.
    \end{equation*}
\item Inequality~\eqref{eq:fund_ineq} controls the averaged square \emph{coupling distance} $\abs{U-V}$ with the average \emph{parallelogram area} spanned by the pair $(U-U_\ast,V-V_\ast)$.
\item The key point to obtain Theorem~\ref{th:main} below is to apply inequality~\eqref{eq:fund_ineq} using the probability space $(\Omega, \P) \equiv ([1,N],\bracket{\, . \, }_{N}$). This yields an inequality of the form~\eqref{eq:rate}.
  \end{enumerate}
\end{Rem}
We thus obtain the main theorem on the power law trend to equilibrium with respect to the quadratic Wassertsein distance:
\begin{The}\label{th:main}
  Let $t \mapsto \pN{V}_t \in \pare{\R^d}^N$ any Kac's conservative particle system with Maxwell molecules and normalization conditions~\eqref{eq:cons}-\eqref{eq:Levy}. Denote~$\pi_t \eqdef \Law(V_t)$. For any $\delta >0, q > 1$, the following trend to equilibrium holds:
 \begin{equation*}\label{eq:ineq_particle_opt}
\fracd{\d^+}{\d t} d_{\calW_2, {\rm sym} }(\pi_t, \pi_\infty ) \leq -  c_{\delta,q,N}\p{\pi_t} d_{\calW_2, {\rm sym} }(\pi_t, \pi_\infty)^{1+ 1/\delta} ,
  \end{equation*}
where in the above
\begin{align*}
 c_{\delta,q,N}\p{\pi_t} = k_{\delta,q,N} \E\p{ \bracket{ \abs{ V_t }^{2q(1+\delta)} } }^{-1/2q\delta } > 0,
\end{align*}
with $ k_{\delta,q,N} $ a numerical constant (independent of the initial condition and of the angular kernel) satisfying $\liminf_{N \to + \infty} k_{\delta,q,N}  >0 $, and explicitly bounded below (NB: $k_{\delta,q,N} \to 0$ polynomially when $q \to 1$).

\end{The}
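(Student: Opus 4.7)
The strategy is to marry the coupling creation formula~\eqref{eq:coupl_crea} with the spectral inequality~\eqref{eq:fund_ineq_uv} via a H\"older splitting, evaluated along an instantaneously optimal Wasserstein coupling. Fix $t$. Using Lemma~\ref{lem:couplrep} applied to $(\pi_\infty, \pi_t)$, I would pick an initial representative $(\pN{U}_0, \pN{V}_0)$ with $\Law(\pN{U}_0) = \pi_\infty$, $\Law(\pN{V}_0) = \pi_t$, $\E\bracket{\abs{\pN{U}_0 - \pN{V}_0}^2}_N = d_{\calW_2, {\rm sym}}(\pi_t, \pi_\infty)^2$, and with the crucial positive correlation $\bracket{\pN{U}_0 \cdot \pN{V}_0}_N \geq 0$ a.s. Running the simultaneous parallel coupling of Definition~\ref{def:simparcoupl} from this pair, the a.s.~decay~\eqref{eq:coupl_mon} together with the coupling creation identity yields, at $h \to 0^+$,
\[
\fracd{\d^+}{\d t} d_{\calW_2, {\rm sym}}(\pi_t, \pi_\infty)^2 \leq - \E\pare{\cc(\pN{U}_0, \pN{V}_0)},
\]
so it suffices to bound $\E\pare{\cc(\pN{U}_0, \pN{V}_0)}$ from below in terms of $d_{\calW_2, {\rm sym}}(\pi_t, \pi_\infty)$ and moments of $\pN{V}_0 \sim \pi_t$.

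Set $A := \abs{u-u_\ast}\abs{v-v_\ast} - (u-u_\ast)\cdot(v-v_\ast)$ and $B := \abs{u-u_\ast}\abs{v-v_\ast} + (u-u_\ast)\cdot(v-v_\ast)$, both nonnegative, so that $\cc = \fracd{d-2}{2(d-1)} \bracket{A}_N$ while the parallelogram term on the right of~\eqref{eq:fund_ineq_uv} factors as $\bracket{AB}_N$. Combining~\eqref{eq:fund_ineq_uv} applied to $(\pN{U}_0, \pN{V}_0)$ (with $\kappa_N := \kappa_{\bracket{\pN{U}_0 \otimes \pN{U}_0}_N}$), the H\"older splitting
\[
\bracket{AB}_N \leq \bracket{A}_N^{a}\,\bracket{A B^{1/(1-a)}}_N^{1-a}, \qquad a \in (0,1),
\]
the crude bound $A, B \leq 2\abs{u-u_\ast}\abs{v-v_\ast}$, and an asymmetric H\"older split of $\bracket{(\abs{u-u_\ast}\abs{v-v_\ast})^s}_N$ (with $s := (2-a)/(1-a)$) between the $u$- and $v$-factors with conjugate exponents $q/(q-1)$ and $q$, yields a pointwise lower bound of the form
\[
\bracket{A}_N \gtrsim \kappa_N^{-1/a}\,\pare{1 - \bracket{\pN{U}_0 \cdot \pN{V}_0}_N^2}^{1/a}\,\bracket{\abs{\pN{U}_0}^{sq/(q-1)}}_N^{-(q-1)(1-a)/(qa)}\,\bracket{\abs{\pN{V}_0}^{sq}}_N^{-(1-a)/(qa)}.
\]

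The critical choice is $a = 2\delta/(2\delta+1)$, which gives simultaneously $1/a = 1 + 1/(2\delta)$ and $sq = 2q(1+\delta)$, matching the exponents of the theorem. Using the positive correlation estimate $1 - \bracket{\pN{U}_0 \cdot \pN{V}_0}_N^2 \geq \tfrac12 \bracket{\abs{\pN{U}_0 - \pN{V}_0}^2}_N$, taking expectation and applying the reverse H\"older inequality $\E[X^{1/a}/Y] \geq (\E X)^{1/a}/\E[Y^{a/(1-a)}]^{(1-a)/a}$ with $X = \bracket{\abs{\pN{U}_0 - \pN{V}_0}^2}_N$ and $Y$ collecting the $\kappa_N$ and the $u$/$v$-moment factors, followed by a three-term H\"older split of $\E Y^{2\delta}$ that isolates $\E\bracket{\abs{\pN{V}_0}^{2q(1+\delta)}}_N$ with weight exponent $1/q$, produces
\[
\E\pare{\cc(\pN{U}_0, \pN{V}_0)} \geq k_{\delta, q, N}\, d_{\calW_2, {\rm sym}}(\pi_t, \pi_\infty)^{2 + 1/\delta}\, \E\bracket{\abs{\pN{V}_0}^{2q(1+\delta)}}_N^{-1/(2q\delta)},
\]
from which the announced differential inequality follows on dividing by $2 d_{\calW_2, {\rm sym}}(\pi_t, \pi_\infty)$.

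The main obstacle is control of the spectral factor $\kappa_N$, which is \emph{a priori} random and can be arbitrarily large. The saving grace is that $\pN{U}_0 \sim \pi_\infty$ is uniformly distributed on the conservation sphere $\S^{Nd-d-1}$; by classical concentration of the empirical covariance, $\lambda_{\max}(\bracket{\pN{U}_0 \otimes \pN{U}_0}_N)$ concentrates around $1/d$ for $N$ large, so $\E[\kappa_N^{r}]$ is uniformly bounded in $N \geq N_0(r)$ for every fixed $r$; by the same principle $\E\bracket{\abs{\pN{U}_0}^{r}}_N$ is bounded uniformly in $N$ for any fixed $r$. These two uniform estimates are exactly what secures $\liminf_{N \to \infty} k_{\delta, q, N} > 0$. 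The polynomial degeneration $k_{\delta, q, N} \to 0$ as $q \to 1^+$ is intrinsic to the scheme, since the conjugate exponent $sq/(q-1)$ governing the $\pN{U}_0$-moment contribution diverges, forcing the three-term H\"older balance to deteriorate.
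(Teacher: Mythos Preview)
Your proof plan is correct and follows essentially the same route as the paper: optimal symmetric-Wasserstein coupling at time $t$ via Lemma~\ref{lem:couplrep}, the pointwise H\"older splitting of the parallelogram term $\bracket{AB}_N$ with exponent $a = 2\delta/(2\delta+1)$ combined with~\eqref{eq:fund_ineq_uv} (this is the paper's Lemma~\ref{lem:weak_ineq}, which uses the slightly sharper bound $(1+\theta)^b(1-\theta)\leq b^b(2/(b+1))^{b+1}$ in place of your crude $A,B\leq 2\abs{u-u_\ast}\abs{v-v_\ast}$), a second H\"older at the expectation level (Lemma~\ref{lem:hold_0}), and control of the random spectral factor $\kappa_N$ via the equilibrium law of $\bracket{U\otimes U}_N$ (the paper does this with explicit Wishart eigenvalue formulas, Lemmas~\ref{lem:wish}--\ref{lem:wish_int}, rather than appealing to concentration). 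The only technical step you omit is the angular cut-off approximation needed to construct the coupled process rigorously in the non-cut-off case, which the paper handles by a continuity argument in Step~2 of Section~\ref{sub:proof_main}.
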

The moment can be explicitly estimated, uniformly in $N$, in the case of order~$4$ moments.
\begin{Pro}\label{pro:order4}
  Consider the case $0 < \delta < 1$, $2q(1+\delta) = 4$, in Theorem~\ref{th:main}. We have the lower bound estimate:
\[
d_{W_2}\p{\pi_t,\pi_\infty}  \leq \p{ d_{W_2}\p{\pi_0,\pi_\infty}^{-1/\delta} + c_{N,\delta} \p{t-t_\ast}^+   }^{-\delta}.
\]
where the cut-off time depends logarithmically on the initial radial order $4$ moment and is defined by:
\[
t_\ast = 2 \p{\ln \p{ \frac{d}{d+2} \E\bracket{\abs{V_0}^4}_N - 1}  }^+.
\]
and $c_{\delta,N}$ is a numerical constant (independent of the initial condition and of the angular kernel) satisfying $\liminf_{N \to + \infty } c_{N,\delta} >0$ and explicitly bounded below\footnote{NB: for instance, we found $\lim_{\delta \to 1}\lim_{d \to +\infty} \lim_{N\to + \infty} c_{\delta,N} > 10^{-3}$.}.
\end{Pro}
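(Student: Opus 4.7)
The strategy is to bound $\E\bracket{|V_t|^4}_N$ uniformly in $t$ after a cutoff time $t_*$, then feed that bound into Theorem~\ref{th:main} applied with the specific choice $q = 2/(1+\delta) > 1$ (valid since $0 < \delta < 1$), which makes the moment appearing in the rate $c_{\delta,q,N}(\pi_t)$ exactly the fourth moment.

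The key computation is the action of $\calL_N$ on $\bracket{|v|^4}_N$. From the collision formula~\eqref{eq:collision}, the identity $a^2+b^2=\tfrac12((a+b)^2+(a-b)^2)$, and conservation of energy, one gets
\[
|v'|^4 + |v'_\ast|^4 - |v|^4 - |v_\ast|^4 = \tfrac12 |v-v_\ast|^2 \left[(n'_v \cdot (v+v_\ast))^2 - (n_v \cdot (v+v_\ast))^2\right].
\]
Averaging against ${\rm unif}_\theta(n_v,\d n'_v)\beta(\d\theta)$ using the standard spherical mean-square formula combined with the Levy normalization~\eqref{eq:Levy}, then summing over all pairs and invoking the conservation laws~\eqref{eq:cons} (in particular $\sum_n v_{(n)} = 0$), the $O(1/N)$ corrections cancel exactly and one obtains the closed identity
\[
\calL_N \bracket{|v|^4}_N = -\tfrac{1}{2}\bracket{|v|^4}_N + \tfrac{d+2}{2d} - \tfrac{1}{d-1}\left(\Tr(S^2) - \tfrac{1}{d}\right),
\]
where $S \eqdef \tfrac{1}{N}\sum_n v_{(n)}\otimes v_{(n)}$ satisfies $\Tr(S) = 1$. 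Cauchy--Schwarz yields $\Tr(S^2) \geq 1/d$, so the last term is nonpositive and Gronwall's lemma gives
\[
\E\bracket{|V_t|^4}_N \leq \tfrac{d+2}{d} + \left(\E\bracket{|V_0|^4}_N - \tfrac{d+2}{d}\right)^+ e^{-t/2}.
\]
The proposed $t_*$ is precisely the time beyond which this bound is $\leq 2(d+2)/d \leq 4$.

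Once the uniform fourth moment bound holds on $[t_*,+\infty)$, Theorem~\ref{th:main} with $q = 2/(1+\delta)$ delivers a time-uniform differential inequality $\frac{\d^+}{\d t}d_{W_2,{\rm sym}}(\pi_t,\pi_\infty) \leq - c_{N,\delta}'\,d_{W_2,{\rm sym}}(\pi_t,\pi_\infty)^{1+1/\delta}$ with $c_{N,\delta}' > 0$ explicit from $k_{\delta,q,N}$ and the factor $(2(d+2)/d)^{-(1+\delta)/(4\delta)}$. Integrating in the variable $u(t) \eqdef d_{W_2,{\rm sym}}(\pi_t,\pi_\infty)^{-1/\delta}$ produces $u(t) \geq u(t_*) + (c_{N,\delta}'/\delta)(t - t_*)$ on $[t_*,+\infty)$. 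Since the simultaneous parallel coupling is almost surely contractive and Lemma~\ref{lem:couplrep} allows the initial coupling to realize the symmetrized distance, $t \mapsto d_{W_2,{\rm sym}}(\pi_t,\pi_\infty)$ is monotone nonincreasing; hence $u(t_*) \geq u(0)$ and the announced bound follows after setting $c_{\delta,N} \eqdef c_{N,\delta}'/\delta$.

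The main technical obstacle is the bookkeeping in the generator computation: tracking the $O(1/N)$ corrections originating both from the $n\neq m$ restriction in pair sums and from the momentum constraint, and verifying that they cancel exactly to produce an autonomous ODE for $\E\bracket{|V_t|^4}_N$ with $N$-independent coefficients and the sharp relaxation rate $1/2$. Once this identity is in hand, the numerical certification $\lim_{\delta\to 1}\lim_{d\to+\infty}\lim_{N\to+\infty} c_{\delta,N} > 10^{-3}$ is a direct evaluation of the explicit expression for $k_{\delta,q,N}$ supplied by Theorem~\ref{th:main}.
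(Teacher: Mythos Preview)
Your proposal is correct and follows essentially the same route as the paper: compute the action of the generator on the radial fourth moment via the collision formula, use $\Tr(S^2)\geq 1/d$ to close a Gronwall inequality with rate $1/2$, define $t_\ast$ so that the fourth moment is bounded by $2(d+2)/d$ for $t\geq t_\ast$, and then plug this into Theorem~\ref{th:main} with $q=2/(1+\delta)$. The only organizational difference is that the paper bounds the time integral $\int_0^t \bigl(\E\bracket{|V_s|^4}_N\bigr)^{-\gamma}\,\d s$ directly from time $0$ via the estimate~\eqref{eq:cons4}, whereas you integrate the differential inequality from $t_\ast$ and invoke the almost-sure contractivity of the coupling (restarted from an optimal pairing at each time) to recover $u(t_\ast)\geq u(0)$; both yield the same final bound. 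Two small remarks: the identity you derive is not an autonomous ODE for $\E\bracket{|V_t|^4}_N$ but a differential inequality once $\Tr(S^2)\geq 1/d$ is applied (the exact equation still carries $\Tr(S^2)$), and the vanishing of the diagonal $n=m$ terms follows immediately from $L(\varphi)(v,v)=0$ for elastic collisions, so there is no genuine $O(1/N)$ bookkeeping to perform.
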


\begin{Rem}
  Finally, note that similar results as Theorem~\ref{th:main} and Proposition~\ref{pro:order4}, with the same constants, can be obtained directly on the associated non-linear kinetic equation. The sketch of proof is the following. 
  \begin{enumerate}[(i)]
  \item Construct a coupled non-linear equation in $\calP(\R^d \times \R^d)$ with kernel defined from ${\rm unif}_{c,\theta}$, and under angular cut-off (using, say, total variation distance). 
\item Prove the analogue of Theorem~\ref{th:main} using the usual Wasserstein $d_{W_2}$ in $\R^d$.
\item Prove the continuity (in Wasserstein distance) of the solution non-coupled kinetic equation with respect to the angular cut-off parameter (this is done in Section~$5$ of~\cite{TosVil99}, and typically requires an appropriate uniqueness theory).
\item Prove uniform (with respect to the angular cut-off parameter) control on higher moments. For Maxwell molecules, explicit computations of the latter can be carried out (see the classical paper~\cite{IkeTru56}). 
  \end{enumerate}
The details are left for future work.
\end{Rem}

\section{Simultaneous Parallel Coupling}\label{sec:coupl}

\subsection{Parallel coupling of collisions}\label{sub:coupl}
A coupled collision can then be described by expressing the post-collisional velocities
$ (u',u'_\ast,v',v'_\ast) \in \R^{ 2d} \times \R^{ 2d } $
using coupled collision parameters. It is sufficient in order to obtain the above coupling to express, using the \emph{same} collision random parameters, the collision and post-collisional directions $(n_u, n'_u,n_v,n'_v) \in \pare{\S^{d-1}}^2 \times \pare{\S^{d-1}}^2$. This is done using parallel coupling. We state without proof (the reader may resort to a simple drawing here) two equivalent elementary descriptions of the parallel coupling on the sphere. The latter \emph{parallel spherical coupling}.

\begin{Def}\label{def:coupling}
  Let $(n_u,n_v) \in \pare{ \S^{d-1} }^2$ satisfying $n_u \neq -n_v$. There is a unique rotation of $\R^{d}$ denoted 
 \begin{equation*}
   n'_u \mapsto n'_v  = {\rm coupl}_{n_u,n_v}(n'_u) \in \S^{d-1},
 \end{equation*}
called \emph{spherical parallel coupling}, satisfying $n'_u=n'_v$ if $n_u=n_v$, and equivalently defined as follows for $n_u \neq n_v$.
\begin{enumerate}[(i)]
\item $n'_v$ is obtained from $n'_u$ by performing the elementary rotation in $\Span(n_u,n_v)$ bringing  $n_u$ to $n_v$.
\item Denote by $t_u$ a tangent vector of $\S^{d-1}$ at base point $n_u$ of a geodesic of length $\theta$ bringing $n_u$ to $n'_u$. Generate $t_v$ from $t_u$ by using parallel transport in $\Span(n_u,n_v)$ from base point $n_u$ to base point $n_v$. Generate $n'_v$ as the endpoint of the geodesic of length $\theta$ and tangent to $t_v$ at base point $n_v$.
\end{enumerate}
Moreover, it satisfies by construction the symmetry condition
\begin{equation}
  \label{eq:sym}
   {\rm coupl}_{n_v,n_u} =  {\rm coupl}_{n_u,n_v}^{-1} .
\end{equation}
If $n_u = -n_v$ and $\sigma \in \S^{d-1}$, then we will denote by ${\rm coupl}_{n_u,n_v}^{\sigma}$ the unique rotation of $\R^{d}$ satisying $(i)-(ii)$ above, but with an elementary rotation, or a geodesic taken in the plane $\Span(n_u, \sigma) = \Span(n_v, \sigma) $.
\end{Def}

It is necessary to keep in mind that the full mapping $(n_u,n_v) \mapsto  {\rm coupl}_{n_u,n_v}$ is smooth, except at a singularity on the extremity set $\set{n_u,n_v \in \S^{d-1} \vert n_u=- n_v}$. This fact has already been pointed out (\cite{Tan78,FouMou09,FouMis13}) in slightly different contexts, and causes difficulty in order to define uniquely regular Levy generators and associated kinetic non-linear equations. However, we will avoid such technical issues by considering angular cut-off, and we will consider coupled Levy or diffusive generators only at the formal level.

Anyway, it is possible to define a coupled probability transition by randomly generating the coupling geodesic when $n_u = - n_v$. This yields:
\begin{Def}
The \emph{spherical parallel coupling} of a random collision with deviation angle $\theta \in [0,\pi]$ is defined by the following probability transition on $\S^{d-1} \times \S^{d-1}$:
\begin{align}
{\rm unif}_{c,\theta}(n_u,n_v ,\d n'_u \d n'_v ) & \eqdef \p{ \one{n_u \neq -n_v} \delta_{ {\rm coupl}_{n_u,n_v}(n'_u) }( \d n'_v) +\one{n_u = -n_v} \delta_{ {\rm coupl}^{\sigma}_{n_u,n_v}(n'_u) }( \d n'_v) {\rm unif}(\d \sigma) } \,   \nonumber \\
& \qquad \times {\rm unif}_{\theta}\pare{ n_u, \d n'_u } , \label{eq:c_coupl}
\end{align}
\end{Def}
\begin{Lem}
The probability transition~\eqref{eq:c_coupl} verifies the symmetry condition
\begin{align}\label{eq:sym2}
 {\rm unif}_{c,\theta}(n_u,n_v , \d n'_u \d n'_v ) =  {\rm unif}_{c,\theta}(n_v , n_u, \d n'_v \d n'_u ).
\end{align}
It is thus a symmetric Markov coupling of the uniform probability transition ${\rm unif}_{\theta}(n , \d n')$.
\end{Lem}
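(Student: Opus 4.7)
The plan is to verify the identity~\eqref{eq:sym2} by a direct disintegration argument, splitting into the generic (non-antipodal) and antipodal cases, and then to deduce the symmetric Markov coupling statement from the marginal structure.

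For the generic case $n_u \neq -n_v$, both sides of~\eqref{eq:sym2} are concentrated on the graph $G := \set{(n'_u,n'_v) : n'_v = {\rm coupl}_{n_u,n_v}(n'_u)}$; indeed, by the inverse relation~\eqref{eq:sym}, one also has $G = \set{(n'_u,n'_v) : n'_u = {\rm coupl}_{n_v,n_u}(n'_v)}$. It therefore suffices to check that the two expressions induce the same measure on $G$ under either parametrization, which reduces, setting $R := {\rm coupl}_{n_u,n_v}$, to the pushforward identity $R_\# \, {\rm unif}_\theta(n_u, \d n'_u) = {\rm unif}_\theta(n_v, \d n'_v)$. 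This in turn rests on two elementary facts: $R$ is a rotation of $\R^d$ sending $n_u$ to $n_v$, hence an isometry of $\S^{d-1}$ mapping the set $\set{n' : n_u \cdot n' = \cos\theta}$ bijectively onto $\set{n' : n_v \cdot n' = \cos\theta}$; and an isometry pushes the uniform probability measure on the former onto the uniform probability measure on the latter, by uniqueness of the rotation-invariant probability.

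For the antipodal case $n_u = -n_v$, the same planar rotation argument as in Definition~\ref{def:coupling} yields, for each fixed $\sigma \in \S^{d-1}$, an inverse relation ${\rm coupl}^\sigma_{n_v,n_u} = ({\rm coupl}^\sigma_{n_u,n_v})^{-1}$, so the generic argument applies fiberwise over $\sigma$. Identity~\eqref{eq:sym2} then follows by integrating against ${\rm unif}(\d\sigma)$, which is itself invariant under the swap of the roles of $u$ and $v$. Finally, to deduce the Markov coupling statement, one marginalizes in~\eqref{eq:c_coupl}: integrating out $n'_v$ trivially returns ${\rm unif}_\theta(n_u, \d n'_u)$ by construction, while integrating out $n'_u$ returns ${\rm unif}_\theta(n_v, \d n'_v)$ as a direct consequence of~\eqref{eq:sym2}.

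I do not expect any real obstacle here; the only mildly delicate point is the rank-one singularity at $n_u = -n_v$, which is handled cleanly by the uniform randomization over~$\sigma$. The whole argument is essentially bookkeeping once the defining property of parallel transport on the sphere --- namely, that it is realized by an ambient rotation preserving the scattering angle --- has been invoked.
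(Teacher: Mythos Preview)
Your proposal is correct and follows essentially the same approach as the paper: the paper's proof invokes precisely the two ingredients you use --- that ${\rm coupl}_{n_u,n_v}$ is an isometry, so that by isotropy $R^{-1}{\rm unif}_\theta(R n_v,\cdot) = {\rm unif}_\theta(n_v,\cdot)$, and the inverse relation~\eqref{eq:sym} --- and then concludes~\eqref{eq:sym2} directly. Your version is simply more explicit about the disintegration and the antipodal case, and adds the marginal verification for the Markov coupling claim, which the paper leaves implicit.
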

\begin{proof}
  By construction, ${\rm coupl}_{n_u,n_v}$ and ${\rm coupl}_{n_u,\sigma}$ are isometries. On the other hand, by isotropy, for any isometry $R$ and vector $n_v \in \S^{d-1 }$ we have $R^{-1} {\rm unif}_{\theta}( R n_v, .)= {\rm unif}_{\theta}( n_v, .)$. Finally, the symmetry condition~\eqref{eq:sym} yields~\eqref{eq:sym2}.
\end{proof}

\subsection{Coupled spherical coordinates}

We give a special description of the isotropic probability transition with scattering angle $\theta$.
\begin{Lem}\label{lem:azim}
  Let $\theta \in [0,\pi]$ be given, as well as $ (n_v,m_v)$ two orthonormal vectors in $\S^{d-1}$. Consider the spherical change of variable
\begin{equation}
  \label{eq:azim}
  n'_v= \cos \theta \, n_v + \sin \theta \cos \ph \, m_v + \sin \theta \sin \ph \,  l \in \S^{d-1}
\end{equation}
where $\ph \in [0,\pi]$ is an \emph{azimuthal angle} and $l \in \S^{d-1}$ is such that $(n_v,m_v,l)$ is an orthonormal triplet. Then the image by the transformation~\eqref{eq:azim} of the probability distribution
\begin{equation}
  \label{eq:spher_vol}
  \sin^{d-3} \ph \, \fracd{\d \ph}{ w_{d-3}} {\rm Unif}_{(n_v,m_v)^{\perp} \cap  \, \S^{d-1} }(\d l),
\end{equation}
is the isotropic probability transition ${\rm unif}_{\theta}(n_v, \d n'_v)$ with initial state $n_v$ and scattering angle $\theta$ ($w_{d-3}$ denotes the Wallis integral normalization). In particular, the latter does not depend on the choice of $m_v$.
\end{Lem}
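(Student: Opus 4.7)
The plan is to reduce the statement to standard spherical coordinates in codimension two.

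First, by rotational invariance (a defining property of $\mathrm{unif}_\theta$), the probability transition $\mathrm{unif}_\theta(n_v,\d n'_v)$ is simply the uniform probability on the $(d-2)$-dimensional sphere
\[
\calC_\theta(n_v) \eqdef \set{ n'_v \in \S^{d-1} \st n_v \cdot n'_v = \cos \theta},
\]
which lies in the affine hyperplane $\set{x \in \R^d \st n_v \cdot x = \cos \theta}$ and has radius $\sin \theta$. Writing $n'_v = \cos \theta \, n_v + \sin \theta \, w$ with $w \in n_v^\perp \cap \S^{d-1}$, the transition $\mathrm{unif}_\theta(n_v,\d n'_v)$ is the pushforward of $\mathrm{Unif}_{n_v^\perp \cap \S^{d-1}}(\d w)$.

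Next I would fix any orthonormal basis $(m_v, e_3, \ldots, e_d)$ of $n_v^\perp$ and apply the classical spherical coordinates on the unit $(d-2)$-sphere $n_v^\perp \cap \S^{d-1}$, with $m_v$ playing the role of the ``pole'': write
\[
w = \cos \ph \, m_v + \sin \ph \, l, \qquad \ph \in [0,\pi], \; l \in (n_v,m_v)^\perp \cap \S^{d-1}.
\]
This is a bijective parametrization (outside the null set $\set{\ph \in \set{0,\pi}}$). The classical Jacobian computation of spherical coordinates on a unit $k$-sphere (here $k=d-2$) yields the surface element
\[
\d \sigma_{d-2}(w) = \sin^{d-3}\ph \, \d \ph \; \d \sigma_{d-3}(l),
\]
and normalizing by the total mass gives exactly the product probability measure~\eqref{eq:spher_vol} with the Wallis constant $w_{d-3} = \int_0^\pi \sin^{d-3}\ph \, \d \ph$.

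The independence with respect to $m_v$ is then automatic: any two choices of $m_v$ differ by an isometry of $n_v^\perp$, under which both $\mathrm{Unif}_{n_v^\perp \cap \S^{d-1}}$ and the product representation are invariant. There is no real obstacle here; the only point requiring a line of care is that the parametrization is measure-preserving off the measure-zero polar set $\set{\sin \ph = 0}$, which is harmless for the pushforward of probability measures.
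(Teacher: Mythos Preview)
Your proof is correct and follows essentially the same route as the paper: identify $\mathrm{unif}_\theta(n_v,\d n'_v)$ with the pushforward of the uniform law on $n_v^\perp \cap \S^{d-1}$ via $n'_v = \cos\theta\, n_v + \sin\theta\, w$, then parametrize that $(d-2)$-sphere by polar coordinates $(\ph,l)$ with pole $m_v$ and invoke the standard volume element. The paper's own proof is a two-line sketch of exactly this argument; your version simply spells out the Jacobian, the Wallis normalization, and the rotational invariance more explicitly.
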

\begin{proof}
${\rm unif}_{\theta}(n_v, \d n_v')$ is defined as the uniform distribution induced by the euclidean structure on the submanifold of $\S^{d-1}$ defined by $n'_v \cdot n_v =\cos \theta$. Moreover the expression of volume elements in (hyper)spherical coordinates implies that for any $m_v \in \S^{d-1}$, the vector $\cos \ph \, m_v + \sin \ph \,  l \in \S^{d-1} $ is distributed (under~\eqref{eq:spher_vol}) uniformly in the $d-2$-dimensional sphere $n_v^{\perp} \cap  \S^{d-1}$. The result follows.
\end{proof}
Of course in the above, only the scattering angle $\theta$ has an intrinsic physical meaning, the azimuthal angle $\ph$ being dependent of the arbitrary choice of the pair $(m_v,l)$. This leads to the core analysis of a spherical coupling.
\begin{Lem}\label{lem:coupl}
  Let $(n_u,n_v) \in \S^{d-1} \times \S^{d-1}$ be given. A pair $(n'_u,n'_v) \in \S^{d-1} \times \S^{d-1}$ is spherically coupled (the spherical coupling mapping is defined in Definition~\ref{def:coupling}), in the sense that $n'_v={\rm coupl}_{n_u,n_v}(n'_u)$ if $n_u\neq n_v$ and $n'_v={\rm coupl}_{n_u,\sigma}(n'_u)$ for some $\sigma \in \S^{d-1}$ otherwise, if and only if
\begin{equation}
  \label{eq:sph_coupl}
  \begin{cases}
    n'_u= \cos \theta \, n_u + \sin \theta \cos \ph \, m_u + \sin \theta \sin \ph \,  l ,\\
    n'_v= \cos \theta \, n_v + \sin \theta \cos \ph \, m_v + \sin \theta \sin \ph \, l,
  \end{cases}
\end{equation}
where in the above $(n_u,m_u,l)$ and $(n_v,m_v,l)$ are both orthonormal sets of vectors such that $(n_u,m_u)$ and $(n_v,m_v)$ belong to the same plane have the same orientation with respect to $l$. Note that if $n_u\neq n_v$, the pair $(m_u,m_v)$ and the angle $\ph$ are defined uniquely up to a common involution (a change of sign of the vectors and the reflexion $\ph \to \pi - \ph$).
\end{Lem}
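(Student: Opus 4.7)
The plan is to read Definition~\ref{def:coupling} together with Lemma~\ref{lem:azim}: the spherical parallel coupling is the unique rotation $R$ of $\R^d$ that acts as the identity on $\Span(n_u,n_v)^\perp$ and rotates $n_u$ onto $n_v$ inside the plane $\Span(n_u,n_v)$ (with the obvious modification when $n_u=-n_v$, where the plane is picked via the random $\sigma$). So one direction amounts to tracking the spherical-coordinate representation of $n'_u$ through $R$, and the other direction amounts to reconstructing $R$ from the data of the decomposition~\eqref{eq:sph_coupl}.

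For the direct implication, I would start from $n'_v = R(n'_u)$, and choose an orthonormal basis beginning with $n_u$, then $m_u$ defined as a unit vector orthogonal to $n_u$ inside the plane $\Span(n_u,n_v)$, and completed arbitrarily into an orthonormal frame. Expand $n'_u$ in the spherical-coordinate form of Lemma~\ref{lem:azim} with this $m_u$ and some unit vector $l$ orthogonal to $\Span(n_u,m_u)$, with an azimuth $\varphi$. Applying $R$ and using $R(n_u)=n_v$, $R(m_u)=: m_v$ (still a unit vector of $\Span(n_u,n_v)$, now orthogonal to $n_v$), and $R(l)=l$ since $l$ lies in the fixed subspace, yields the second line of~\eqref{eq:sph_coupl}. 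The orientation statement is automatic because $R$ is a rotation, so the oriented frames $(n_u,m_u)$ and $(n_v,m_v)$ induce the same orientation of the shared plane relative to $l$.

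For the converse, given the decomposition, I would define $R$ as the identity on $l$ and on $\Span(n_u,m_u,l)^\perp$, and as the planar rotation of $\Span(n_u,m_u)=\Span(n_v,m_v)$ that sends the oriented orthonormal frame $(n_u,m_u)$ to $(n_v,m_v)$; existence and uniqueness of this planar rotation is exactly what the orientation compatibility hypothesis encodes. By linearity $R(n'_u)=n'_v$ and $R(n_u)=n_v$, which matches Definition~\ref{def:coupling} (resp.\ its extension using $\sigma$ when $n_u=-n_v$).

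Finally, the uniqueness-modulo-involution statement is a routine check of the spherical coordinates: when $n_u\neq n_v$, the plane $\Span(n_u,n_v)$ is forced, so $m_u$ is constrained to lie in a one-dimensional subspace, hence is defined up to sign; the orientation condition then forces $m_v$ to change sign together with $m_u$, and re-expanding $n'_u$ with $-m_u$ requires $\cos\varphi\mapsto -\cos\varphi$ while $\sin\varphi$ and $l$ are unchanged, i.e.\ $\varphi\mapsto\pi-\varphi$. I do not anticipate a serious obstacle: the only mildly delicate point will be the bookkeeping of this involution and of the singular case $n_u=-n_v$ via the auxiliary $\sigma$, where the plane is no longer intrinsic but is chosen by $\sigma$ and the argument goes through verbatim with $\Span(n_u,\sigma)$ in place of $\Span(n_u,n_v)$.
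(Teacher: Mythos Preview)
Your proposal is correct and follows the same idea as the paper: identify the coupling map with the elementary rotation $R$ in $\Span(n_u,n_v)$ sending $n_u$ to $n_v$ and fixing the orthogonal complement, then track the spherical-coordinate expansion through $R$ using $R(m_u)=m_v$ and $R(l)=l$. The paper's own proof is a two-line sketch of exactly this observation for the non-singular case; you have simply written out both implications, the involution bookkeeping, and the singular case $n_u=-n_v$ in more detail.
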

\begin{proof}
Assume $n_u\neq n_v$.  Denote by $R_\theta$ the unique elementary rotation bringing $n_u$ to $n_v$. By construction $R_\theta m_u = m_v$, and $R_\theta l = l$
\end{proof}
 This immediately implies that the coupled probability transition ${\rm unif}_{c,\theta}(n_u,n_v,\d n'_u \d n'_v)$ is the image using the mapping~\eqref{eq:sph_coupl} above of the uniform probability described in $(\ph,l)$-variables and given by~\eqref{eq:spher_vol}.
\begin{Lem}\label{lem:coupl_2}
Let $(n_u,n_v) \in \S^{d-1} \times \S^{d-1}$ be given. If $n_u = -n_v $ pick $(m_u,m_v)$ in the plane $\Span(n_u,n_v,\sigma)$ for some $\sigma \in \S^{d-1}$. Then the image under the mapping~\eqref{eq:sph_coupl} of the probability distribution
\[
\sin^{d-3} \ph \, \fracd{\d \ph}{ w_{d-3}} {\rm unif}_{(n_v,m_v)^{\perp} \cap  \, \S^{d-1} }(\d l) {\rm unif} (\d \sigma) ,
\]
 is given by ${\rm unif}_{c,\theta}(n_u,n_v,\d n'_u \d n'_v)$.
\end{Lem}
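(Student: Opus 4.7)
The plan is to combine Lemma~\ref{lem:coupl} (which characterizes coupled pairs $(n'_u,n'_v)$ via the coordinate representation~\eqref{eq:sph_coupl} in shared azimuthal angle $\ph$ and shared orthogonal vector $l$) with Lemma~\ref{lem:azim} (which identifies the pushforward of $\sin^{d-3}\ph \, \d\ph/w_{d-3} \otimes {\rm unif}_{(n_v,m_v)^\perp \cap \S^{d-1}}(\d l)$ under the spherical change of variables with ${\rm unif}_\theta(n_v,\d n'_v)$). Essentially, the $(\ph,l)$-parametrization decouples the scattering so that $n'_u$ becomes ${\rm unif}_\theta(n_u,\cdot)$-distributed while $n'_v$ is the deterministic image of $n'_u$ under ${\rm coupl}_{n_u,n_v}$.

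First I would treat the case $n_u \neq -n_v$. Pick $(m_u,m_v)$ in the common plane $\Span(n_u,n_v)$ with the appropriate orientation, so that the elementary rotation $R$ bringing $n_u$ to $n_v$ satisfies $R m_u = m_v$ and $R l = l$ for any $l \perp \Span(n_u,n_v)$. Applying Lemma~\ref{lem:azim} to the $n'_u$ coordinate with the orthonormal pair $(n_u,m_u)$, the pushforward of $\sin^{d-3}\ph \, \d\ph/w_{d-3} \otimes {\rm unif}_{(n_u,m_u)^\perp \cap \S^{d-1}}(\d l)$ under the first equation of~\eqref{eq:sph_coupl} equals ${\rm unif}_\theta(n_u,\d n'_u)$. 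Since $(n_u,m_u)^\perp = (n_v,m_v)^\perp$ (both equal $\Span(n_u,n_v)^\perp$), the same measure on $l$ appears in the second equation of~\eqref{eq:sph_coupl}, and by Lemma~\ref{lem:coupl} the $n'_v$ coordinate is then exactly ${\rm coupl}_{n_u,n_v}(n'_u)$. This produces the joint law
\[
\delta_{{\rm coupl}_{n_u,n_v}(n'_u)}(\d n'_v)\,{\rm unif}_\theta(n_u,\d n'_u),
\]
matching the $n_u \neq -n_v$ branch of~\eqref{eq:c_coupl}.

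Second I would handle the degenerate case $n_u = -n_v$. Here the plane $\Span(n_u,n_v)$ is only one-dimensional, so the coupling rotation is not uniquely determined; this ambiguity is exactly absorbed by the auxiliary variable $\sigma \in \S^{d-1}$ sampled uniformly. For each $\sigma$, pick $(m_u,m_v)$ in $\Span(n_u,\sigma) = \Span(n_v,\sigma)$, so that $(n_u,m_u)^\perp = (n_v,m_v)^\perp$ as before, and rerun the argument of the first case with ${\rm coupl}^\sigma_{n_u,n_v}$ in place of ${\rm coupl}_{n_u,n_v}$. Integrating over $\sigma$ with the uniform measure gives
\[
\int \delta_{{\rm coupl}^\sigma_{n_u,n_v}(n'_u)}(\d n'_v)\,{\rm unif}(\d\sigma)\,{\rm unif}_\theta(n_u,\d n'_u),
\]
matching the $n_u = -n_v$ branch of~\eqref{eq:c_coupl}.

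The only subtlety, which I view as the main checkpoint rather than a genuine obstacle, is verifying that the measure on $l$ appearing in the statement — which is written using $(n_v,m_v)$ — is really the right one for both azimuthal changes of variable simultaneously, and that Lemma~\ref{lem:azim}'s independence of the choice of $m_v$ is used correctly. This is immediate from the fact that $(n_u,m_u)$ and $(n_v,m_v)$ span the same plane in both cases, so their orthogonal complements coincide, and the shared $l$ in~\eqref{eq:sph_coupl} is legitimately distributed uniformly on that common complement. The symmetry property~\eqref{eq:sym2} then follows as a sanity check from the symmetry~\eqref{eq:sym} of the coupling map.
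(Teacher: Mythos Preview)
Your proposal is correct and matches the paper's approach exactly: the paper gives no formal proof of this lemma, stating only that it ``immediately'' follows from Lemma~\ref{lem:coupl} combined with the $(\ph,l)$-coordinate description of ${\rm unif}_\theta$ in Lemma~\ref{lem:azim}. Your write-up spells out precisely this implication, including the key observation that $(n_u,m_u)^\perp = (n_v,m_v)^\perp$ so that the shared $l$-variable is legitimately uniform on the common orthogonal complement, and handles the degenerate case $n_u=-n_v$ via the auxiliary $\sigma$ in the same way as Definition~\ref{def:coupling} and~\eqref{eq:c_coupl}.
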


\subsection{Contractivity of spherical couplings}\label{sec:contr}


We can first calculate the quadratic contractivity (''coupling creation'') equation satisfied by parallel spherical couplings.
  
\begin{Lem}\label{lem:coupl_coll}
Consider coupled collisional and post-collisional velocities $(u,u_\ast,v,v_\ast) \in \R^{2d}\times \R^{2d}$, and a parallel spherical coupling using the coordinate expression of Lemma~\ref{lem:coupl}. Then we have:
\begin{align}
& \abs{u'-v'}^2 +\abs{u'_\ast-v'_\ast}^2-\abs{u-v}^2-\abs{u_\ast-v_\ast}^2 = \nonumber\\
& \hspace{1cm} - \sin^2\theta \sin^2\ph 
\pare{\abs{u-u_\ast}\abs{v-v_\ast} - (u-u_\ast)  \cdot (v-v_\ast) } \leq 0 . \label{eq:m2var}
\end{align} 
\end{Lem}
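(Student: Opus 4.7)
The plan is to reduce the claimed identity to an explicit computation of $n'_u \cdot n'_v - n_u \cdot n_v$ using the coupled spherical coordinates of Lemma~\ref{lem:coupl}, combined with momentum conservation.

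First, I would apply the parallelogram identity
\[
\abs{a}^2 + \abs{b}^2 = \tfrac{1}{2}\abs{a+b}^2 + \tfrac{1}{2}\abs{a-b}^2
\]
to $a = u-v$, $b = u_\ast - v_\ast$, and then to the corresponding primed pair. By conservation of total momentum in~\eqref{eq:collision}, $u+u_\ast = u'+u'_\ast$ and $v+v_\ast = v'+v'_\ast$, so the ``$\abs{a+b}^2$'' contributions drop out of the difference. What remains is
\[
\tfrac12\pare{\abs{(u'-u'_\ast) - (v'-v'_\ast)}^2 - \abs{(u-u_\ast) - (v-v_\ast)}^2}.
\]
Using that $u'-u'_\ast = \abs{u-u_\ast}\, n'_u$ and $v'-v'_\ast = \abs{v-v_\ast}\, n'_v$ (conservation of the moduli of relative velocities), expanding both squared norms and invoking $\abs{u'-u'_\ast}=\abs{u-u_\ast}$, $\abs{v'-v'_\ast}=\abs{v-v_\ast}$, the ``diagonal'' $\abs{u-u_\ast}^2$ and $\abs{v-v_\ast}^2$ terms cancel and the expression collapses to
\[
-\abs{u-u_\ast}\,\abs{v-v_\ast}\,\pare{n'_u \cdot n'_v - n_u \cdot n_v}.
\]

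Second, I would evaluate this remaining scalar product difference using the parametrization~\eqref{eq:sph_coupl}. The two structural facts that make the computation transparent are: (i)~the vector $l$ is orthogonal to the common plane $\Span(n_u,m_u) = \Span(n_v,m_v)$, killing every scalar product of $l$ with an in-plane vector; (ii)~the elementary rotation $R_\theta$ bringing $n_u$ to $n_v$ also maps $m_u$ to $m_v$ (Lemma~\ref{lem:coupl}), so setting $\cos\alpha \eqdef n_u \cdot n_v$ in the common plane gives $m_u \cdot m_v = \cos\alpha$ and $n_u \cdot m_v + m_u \cdot n_v = 0$. After expansion the mixed $(\cos\theta\sin\theta)$ terms vanish, the $\sin^2\theta\cos^2\ph$ and $\cos^2\theta$ contributions combine, and a short trigonometric simplification yields
\[
n'_u \cdot n'_v - n_u \cdot n_v = \sin^2\theta\,\sin^2\ph\, \pare{1 - n_u \cdot n_v}.
\]

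Finally, plugging this back and rewriting $\abs{u-u_\ast}\,\abs{v-v_\ast}\,(n_u \cdot n_v) = (u-u_\ast) \cdot (v-v_\ast)$ gives precisely~\eqref{eq:m2var}, with non-positivity following from Cauchy--Schwarz. I do not anticipate any genuine obstacle: the argument is essentially trigonometric, and the only step that really uses the geometry of the parallel coupling is item~(ii) above, namely that choosing $(m_u,m_v)$ coherently in the common plane forces $m_u \cdot m_v = n_u \cdot n_v$ with vanishing cross terms—this is exactly what the construction in Definition~\ref{def:coupling} and Lemma~\ref{lem:coupl} was designed to guarantee.
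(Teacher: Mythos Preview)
Your proposal is correct and is essentially the paper's own argument. The parallelogram identity you apply to $a=u-v$, $b=u_\ast-v_\ast$ is exactly the paper's change of variables $s=\tfrac12(v+v_\ast)$, $d=\tfrac12(v-v_\ast)$ (since $a+b=2(s_u-s_v)$ and $a-b=2(d_u-d_v)$), and your computation of $n'_u\cdot n'_v$ via \eqref{eq:sph_coupl}, using $m_u\cdot m_v=n_u\cdot n_v$ and $n_u\cdot m_v=-m_u\cdot n_v$, matches the paper line by line.
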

\begin{proof}
We use the following change of variable:
\begin{equation*}
  \begin{cases}
    \dps  s_v \eqdef \fracd{1}{2}\pare{v+v_\ast} \\[2pt]
    \dps d_v \eqdef \fracd{1}{2}\pare{v-v_\ast}
  \end{cases}
\Leftrightarrow
\begin{cases}
    \dps  v  = s_v + d_v \\[2pt]
    \dps v _\ast = s_v - d_v
  \end{cases}.
\end{equation*}
First remark that
\begin{align}
  \abs{u-v}^2  + \abs{u_\ast-v_\ast}^2 & = \abs{s_u - s_v + d_u -d_v}^2 + \abs{s_u - s_v - d_u + d_v}^2 \nonumber \\
& = 2 \abs{s_u - s_v}^2 + 2 \abs{d_u - d_v}^2 \label{eq:c2}
\end{align}
Developing the left hand side of~\eqref{eq:m2var}, and using the conservation laws ($s'=s$ and $\abs{d'}=\abs{d}$), we obtain
\begin{align*}
&  \abs{u'-v'}^2 +\abs{u'_\ast-v'_\ast}^2-\abs{u-v}^2-\abs{u_\ast-v_\ast}^2 \nonumber \\
&\quad = 2 \abs{d_u' -d_v'}^2 - 2 \abs{d_u -d_v}^2 \nonumber \\
& \quad =  - (u'-u'_\ast) \cdot (v '- v'_\ast) + (u-u_\ast) \cdot (v - v_\ast) \nonumber \\
& \quad = - \abs{u-u_\ast}\abs{v - v_\ast} \pare{n'_u \cdot n'_v - n_u \cdot n_v }\\
\end{align*}

Next, we expand $n_u' . n_v'$ using~\eqref{eq:sph_coupl} and obtain:
\begin{equation*}
n_u' . n_v' =  \pare{\cos \theta \, n_u + \sin \theta \cos \ph \, m_u}.\pare{\cos \theta \, n_v + \sin \theta \cos \ph \, m_v }+ \sin^2 \theta \sin^2 \ph.
  \end{equation*}
Next by construction, $(m_u,m_v)$ is obtained from a $\fracd{\pi}{2}$-rotation of $(n_u,n_v)$, so that $n_u . n_v = m_u . m_v$ and  $n_u.m_v=-m_u.n_v$ and
\begin{equation*}
n_u' . n_v' =  \pare{ \cos^2 \theta + \sin^2 \theta \cos^2 \ph } \, n_u.n_v  + \sin^2 \theta \sin^2 \ph .
  \end{equation*}
Using $ 1=  \cos^2 \theta + \sin^2 \theta \cos^2 \ph  + \sin^2 \theta \sin^2 \ph$ we obtain
\begin{equation*}
 n_u' . n_v' - n_u.n_v =  -  \sin^2 \theta \sin^2 \ph \pare{  n_u.n_v  -1 },
  \end{equation*}
and the result follows.
\end{proof}

We can finally  compute the coupling creation functional for the coupled Kac's particle system.
\begin{Lem}
Consider the coupled Kac's particle particle system as defined by~\eqref{eq:full_gen_c}-\eqref{eq:coupled_gen}. Then the coupling creation functional $\cc $ is given by~\eqref{eq:coupl_crea}.
\end{Lem}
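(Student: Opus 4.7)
My plan is to compute $\cc(u,v) = -\calL_{c,N}\p{\bracket{\abs{u-v}^2}_N}(u,v)$ directly by combining the pair-generator decomposition~\eqref{eq:full_gen_c}-\eqref{eq:coupled_gen} with the pointwise identity from Lemma~\ref{lem:coupl_coll}, and then integrating out the azimuthal and scattering angles against their respective distributions.

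First, since $\calL_{c,N} = \frac{1}{2N}\sum_{n \neq m} L_{c,(n,m)}$ and each pair-generator $L_{c,(n,m)}$ affects only coordinates $(n,m)$, writing the test function as $\frac{1}{N}\sum_k \abs{u_{(k)} - v_{(k)}}^2$ we get
\[
\calL_{c,N}\p{\bracket{\abs{u-v}^2}_N}(u,v) = \frac{1}{2N^2} \sum_{n \neq m} L_{c,(n,m)}\p{\abs{u_{(n)}-v_{(n)}}^2 + \abs{u_{(m)}-v_{(m)}}^2}.
\]
For each fixed pair, apply Lemma~\ref{lem:coupl_coll} with $u \leftarrow u_{(n)}$, $u_\ast \leftarrow u_{(m)}$, $v \leftarrow v_{(n)}$, $v_\ast \leftarrow v_{(m)}$ to get the pointwise increment under a collision of angles $(\theta,\ph)$.

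Next, integrate the resulting $-\sin^2\theta\sin^2\ph\,(\abs{u_{(n)}-u_{(m)}}\abs{v_{(n)}-v_{(m)}} - (u_{(n)}-u_{(m)})\cdot(v_{(n)}-v_{(m)}))$ against the coupled transition ${\rm unif}_{c,\theta}\beta(\d\theta)$. By Lemma~\ref{lem:coupl_2}, the azimuthal angle $\ph$ carries the probability $\sin^{d-3}\ph\,\d\ph/w_{d-3}$ on $[0,\pi]$, so that
\[
\int_0^\pi \sin^2\ph\,\frac{\sin^{d-3}\ph\,\d\ph}{w_{d-3}} = \frac{w_{d-1}}{w_{d-3}} = \frac{d-2}{d-1},
\]
using the Wallis recursion $w_{d-1} = \frac{d-2}{d-1}\,w_{d-3}$. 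Combined with the Levy normalization~\eqref{eq:Levy}, $\int_0^\pi \sin^2\theta\,\beta(\d\theta) = 1$, this reduces the angular integration to the single factor $\frac{d-2}{d-1}$.

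Collecting these pieces yields
\[
L_{c,(n,m)}\p{\abs{u_{(n)}-v_{(n)}}^2 + \abs{u_{(m)}-v_{(m)}}^2} = - \frac{d-2}{d-1}\p{\abs{u_{(n)}-u_{(m)}}\abs{v_{(n)}-v_{(m)}} - (u_{(n)}-u_{(m)})\cdot (v_{(n)}-v_{(m)})}.
\]
Substituting into the sum, noting that the diagonal $n=m$ terms vanish so the sum $\sum_{n \neq m}$ can be replaced by $\sum_{n_1,n_2}$, and recognizing the $\bracket{\,\cdot\,}_N$ notation (normalized by $1/N^2$), we obtain
\[
\calL_{c,N}\p{\bracket{\abs{u-v}^2}_N}(u,v) = -\frac{d-2}{2(d-1)}\bracket{\abs{u-u_\ast}\abs{v-v_\ast} - (u-u_\ast)\cdot(v-v_\ast)}_N,
\]
and taking the opposite sign gives~\eqref{eq:coupl_crea}.

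The only nontrivial step is the Wallis identification of the azimuthal integral; once Lemma~\ref{lem:coupl_coll} and Lemma~\ref{lem:coupl_2} are granted, the rest is bookkeeping. A minor technical caveat is the singularity of ${\rm unif}_{c,\theta}$ on $\{n_u = -n_v\}$: under the angular cut-off hypothesis the computation is rigorous, and the general case follows by the continuity argument flagged in the remark just after Definition~\ref{def:simparcoupl}.
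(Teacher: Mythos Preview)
Your proof is correct and follows essentially the same route as the paper's: both reduce $\cc$ to the two-body action $\frac{1}{2}\bracket{L_c(\abs{u-v}^2+\abs{u_\ast-v_\ast}^2)}_N$, invoke Lemma~\ref{lem:coupl_coll} for the pointwise increment, and then integrate out $\ph$ via the Wallis ratio $w_{d-1}/w_{d-3}=(d-2)/(d-1)$ and $\theta$ via the Levy normalization~\eqref{eq:Levy}. Your version simply spells out the pair-sum bookkeeping and the diagonal-completion step more explicitly than the paper does.
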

\begin{proof}
By definition
\begin{align*}
  \cc(u,v)  & = \frac{1}{2} \bracket{  L_c \pare{(u,v,u_\ast,v_\ast) \mapsto \abs{u-v}^2+\abs{u_\ast-v_\ast}^2} }_N  \\
& =  - \frac{1}{2} \int_{[0,\pi]^2} \sin^2\theta \sin^2\ph  \, \beta(\d \theta)\,  \fracd{\sin^{d-3} \ph\d \ph }{w_{d-3}} \bracket{ \abs{u-u_\ast}\abs{v-v_\ast} - (u-u_\ast) \cdot (v-v_\ast) }_N
\end{align*}
and the result follows from the well known formula for Wallis integrals $w_{d-1}/w_{d-3} = (d-2)/(d-1)$.
\end{proof}

\section{The special inequality and its sharpness}\label{sec:spec}

\subsection{The special inequality}\label{sub:spec}
If $(Z_1,Z_2)$ are two centered random vectors, we will use the notation
\begin{equation*}
  C_{Z_1,Z_2} \eqdef \E \pare{Z_1 \otimes Z_2} .
\end{equation*}

Let $(U,V)$ be two centered random vectors in a Euclidean space, and $(U_\ast,V_\ast)$ an i.i.d. copy. The goal is to bound from above the quadratic \emph{coupling distance} $\E\pare{ \abs{U-V}^2 }$ with the following alignement average
\begin{equation*}
  \E\pare{ \abs{U-U_\ast}^2\abs{V-V_\ast}^2 - \pare{ \pare{U-U_\ast} \cdot \pare{V-V_\ast}}^2  }.
\end{equation*}
The latter can be interpreted as the average parallelogram squared area spanned by the two vector differences $U-U_\ast$ and $V-V_\ast$.

The main computation is based on a brute force expansion and a general trace inequality applied to the full covariance
\begin{equation*}
  C_{(U,V),(U,V)} = \bmat C_{U,U} & C_{U,V} \\ 
C_{V,U} & C_{V,V} \emat.
\end{equation*}

The trace inequality is detailed in the following lemma.
\begin{Lem}\label{lem:trace}
  Let $U$ and $V$ be two centered random vectors in $\R^d$. Then we have 
\[
\Tr\pare{C_{U,U} C_{V,V} } - \Tr \pare{C_{U,V} C_{V,U} }  \leq \min\pare{ \fracd{\lambda_{\rm max} \p{C_{U,U}}}{\Tr \pare{ C_{U,U} } },\fracd{\lambda_{\rm max} \p{C_{V,V}}}{\Tr \pare{ C_{V,V} }}} \pare{\Tr\pare{C_{U,U} } \Tr\pare{ C_{V,V} } - \Tr \pare{C_{U,V} }^2}, 
\]
where $\lambda_{\rm max} \p{.}$ denotes the spectral radius (maximal eigenvalue) of a symmetric non-negative operator. Moreover, the equality case holds if (sufficient condition) either $C_{U,U}$ and $C_{U,V}$ or $C_{V,V}$ and $C_{U,V}$ are co-linear to the identity matrix.
\end{Lem}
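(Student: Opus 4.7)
The plan is to reduce the trace inequality to a scalar inequality on matrix entries by diagonalizing $A := C_{U,U}$, then assemble the bound from a short chain of elementary Cauchy--Schwarz estimates. Fix an orthonormal eigenbasis $(e_i)_{i=1}^d$ of $A$ with eigenvalues $\lambda_1 \geq \cdots \geq \lambda_d \geq 0$, and write $B_{ij} = e_i^T B e_j$, $M_{ij} = e_i^T M e_j$ with $B := C_{V,V}$, $M := C_{U,V}$. In this basis
\[
\Tr(AB) = \sum_i \lambda_i B_{ii}, \quad \Tr(MM^T) = \sum_{i,j} M_{ij}^2, \quad \Tr(A)\Tr(B) = \Bigl(\sum_i \lambda_i\Bigr)\Bigl(\sum_j B_{jj}\Bigr), \quad \Tr(M)^2 = \Bigl(\sum_i M_{ii}\Bigr)^2,
\]
and $\kappa = \lambda_{\max}(A)/\Tr(A) = \lambda_1/\sum_k \lambda_k$. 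Using $\kappa \Tr(A) = \lambda_1$, a direct rearrangement turns the target inequality into
\[
\frac{\lambda_1}{\sum_k \lambda_k} \Bigl(\sum_i M_{ii}\Bigr)^2 \leq \Tr(MM^T) + \sum_i (\lambda_1 - \lambda_i) B_{ii}.
\]

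For the left-hand side I would apply the weighted Cauchy--Schwarz inequality $\bigl(\sum_i \sqrt{\lambda_i}\cdot M_{ii}/\sqrt{\lambda_i}\bigr)^2 \leq \bigl(\sum_i \lambda_i\bigr)\bigl(\sum_i M_{ii}^2/\lambda_i\bigr)$, producing the upper bound $\sum_i (\lambda_1/\lambda_i) M_{ii}^2$. Splitting the Frobenius norm as $\Tr(MM^T) = \sum_i M_{ii}^2 + \sum_{i\neq j} M_{ij}^2$ and dropping the off-diagonal part, it is then enough to prove
\[
\sum_i \Bigl(\frac{\lambda_1}{\lambda_i} - 1\Bigr) M_{ii}^2 \leq \sum_i (\lambda_1 - \lambda_i) B_{ii}.
\]
Here the joint positivity of the $2d \times 2d$ block covariance $\bmat A & M \\ M^T & B \emat$ enters decisively: probing it against rank-one test vectors of the form $(\alpha e_i, \beta e_i)$ gives the pointwise Cauchy--Schwarz estimate $M_{ii}^2 = (\E U_i V_i)^2 \leq \lambda_i B_{ii}$. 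Multiplying this by the non-negative factor $\lambda_1/\lambda_i - 1$ and summing in $i$ closes the argument termwise. The degenerate case $\lambda_i = 0$ is painless, since it forces $M_{ii}=0$ and the corresponding terms drop out.

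I do not anticipate a serious obstacle; the proof is a disciplined chaining of three elementary inequalities, and its sharpness yields the equality cases immediately. The $\min$ with $\lambda_{\max}(B)/\Tr(B)$ follows by symmetry, since the derivation is invariant under $(U,V) \leftrightarrow (V,U)$ (replacing $M$ by $M^T$), and the common factor $\Tr(A)\Tr(B) - \Tr(M)^2 = \E|U|^2 \E|V|^2 - (\E U\cdot V)^2$ is non-negative by Cauchy--Schwarz, so both individual bounds combine into the $\min$. For the sufficient equality condition, if $A$ and $M$ are both proportional to the identity then the $\lambda_i$ are constant, the $M_{ii}$ are constant and $M_{ij}=0$ off the diagonal, so each of the three inequalities above degenerates to a trivial equality, regardless of $B$; the analogous statement with $B$ and $M$ proportional to the identity is obtained by symmetry.
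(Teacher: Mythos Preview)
Your proof is correct and follows essentially the same approach as the paper: diagonalize $C_{U,U}$, discard the off-diagonal contribution $\sum_{j\neq k} M_{jk}^2 \geq 0$, and combine the weighted Cauchy--Schwarz $(\sum_i M_{ii})^2 \leq (\sum_i \lambda_i)(\sum_i M_{ii}^2/\lambda_i)$ with the pointwise covariance bound $M_{ii}^2 \leq \lambda_i B_{ii}$. The only cosmetic difference is the order of the steps; in fact you make the role of the pointwise bound $M_{ii}^2 \leq \lambda_i B_{ii}$ (needed to justify the paper's ``by definition of the maximal eigenvalue'' step) more explicit than the paper does.
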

\begin{proof}
First assume that $C_{U,U}$ has only strictly positive eigenvalues. In an orthonormal basis where $C_{U,U}$ is diagonal, we have the expression, for $i,j,k \in [\! [1,d]\!]$:
\begin{align*}
  \Tr\pare{C_{U,U} C_{V,V}-C_{U,V} C_{V,U} }    & =   \sum_{i}  C^{i,i}_{U,U} C^{i,i}_{V,V} -  \sum_{j,k}   C^{j,k}_{U,V} C^{k,j}_{V,U} ,\\
& = \sum_{i} \pare{ C^{i,i}_{U,U} C^{i,i}_{V,V} - \pare{ C^{i,i}_{U,V} }^2} \underbrace{-\sum_{j \neq k}  \pare{C^{j,k}_{U,V}}^2}_{\leq 0}.
\end{align*}
Then, by definition of the maximal eigenvalue of $C_{U,U}$:
\begin{align*}
  \sum_{i} C^{i,i}_{U,U} C^{i,i}_{V,V} - \pare{ C^{i,i}_{U,V} }^2 & \leq \lambda_{\rm max} \p{C_{U,U}} \pare{\Tr\pare{C_{V,V}} - \sum_{i} \fracd{\pare{ C^{i,i}_{U,V} }^2 }{C^{i,i}_{U,U} } }, \\
\end{align*}
so that by Cauchy-Schwarz inequality
\begin{align*}
 \pare{\sum_{i}  C^{i,i}_{U,V} }^2 \leq \pare{ \sum_{i} \fracd{\pare{ C^{i,i}_{U,V} }^2 }{C_{U,U}^{i,i} } } \times  \pare{ \sum_{i} C_{U,U}^{i,i}}, 
\end{align*}
and we eventually get
\begin{align*}
  \Tr\pare{C_{U,U} C_{V,V}-C_{U,V} C_{V,U} }    \leq \fracd{\lambda_{\rm max} \p{C_{U,U}}}{\Tr\pare{C_{U,U}}} \pare{\Tr\pare{C_{V,V}} \Tr\pare{C_{U,U}} - \Tr \pare{C_{U,V} }^2}.
\end{align*}
The general case of degenerate eigenvalues is obtained by density.
\end{proof}
The expansion of the average square paralellogram area is detailed in the next lemma.
\begin{Lem}\label{lem:eq_al}
Let $U$ and $V$ be two centered random vector in $\R^d$. Let $(U_\ast,V_\ast)$ be a i.i.d. copy. 
Then we have the following decomposition:

\begin{align}\label{eq:eq_al}
 & \E\pare{ \abs{U-U_\ast}^2\abs{V-V_\ast}^2 - \pare{ \pare{U-U_\ast} \cdot \pare{V-V_\ast}}^2  } \nonumber \\
& \quad  = \underbrace{\E\pare{ \abs{U}^2\abs{V}^2 - \pare{U \cdot V}^2 }}_{\geq 0} +  \underbrace{\Tr\pare{ \pare{C_{U,V} -C_{V,U}}\pare{C_{V,U} -C_{U,V}}}}_{\geq 0} \nonumber\\
& \qquad + 2\underbrace{ \pare{  \Tr\pare{C_{U,U} } \Tr\pare{ C_{V,V} } - \Tr \pare{C_{U,V} }^2 - \Tr\pare{C_{U,U} C_{V,V} } + \Tr \pare{C_{U,V} C_{V,U} } }}_{ \dps \mathop{\geq}^{(\text{Lemma~\ref{lem:trace}})} \pare{1-\min\pare{ \frac{\lambda_{\rm max} \p{C_{U,U}}}{\Tr \pare{ C_{U,U} } } ,\frac{\lambda_{\rm max} \p{C_{V,V}}}{\Tr \pare{ C_{V,V} }}} } \pare{\Tr\pare{C_{U,U} } \Tr\pare{ C_{V,V} } - \Tr \pare{C_{U,V} }^2}} .
\end{align}
\end{Lem}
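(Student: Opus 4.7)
The plan is a direct algebraic expansion followed by careful bookkeeping, culminating in an application of Lemma~\ref{lem:trace}. First I would expand
\[
\abs{U-U_\ast}^2 = \abs{U}^2 + \abs{U_\ast}^2 - 2\, U\cdot U_\ast
\]
and similarly for $\abs{V-V_\ast}^2$, and multiply to obtain $9$ monomials; in parallel, expand $(U-U_\ast)\cdot(V-V_\ast) = U\cdot V - U\cdot V_\ast - U_\ast \cdot V + U_\ast\cdot V_\ast$ and square to get $16$ monomials. Taking expectations, I would repeatedly use three structural inputs: independence of $(U_\ast, V_\ast)$ from $(U,V)$, identical distribution, and the centering $\E U = \E V = 0$. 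The effect of centering is that any monomial of odd total degree in $(U_\ast, V_\ast)$ factorizes, by independence, into a product containing $\E U_\ast$ or $\E V_\ast$ and hence vanishes; this eliminates roughly half of the terms at once.

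Next I would translate the surviving terms into traces via the elementary identities $\E\bigl((U_\ast)_i (V_\ast)_j\bigr) = (C_{U,V})_{ij}$ and $\E \abs{U_\ast}^2 = \Tr(C_{U,U})$. Three patterns appear: (a) fully separable products such as $\E\abs{U}^2 \E\abs{V_\ast}^2$, which give $\Tr(C_{U,U})\Tr(C_{V,V})$; (b) ``diagonal'' contractions such as $\E(U\cdot U_\ast)(V\cdot V_\ast) = \sum_{i,j}(C_{U,V})_{ij}^2 = \Tr(C_{U,V} C_{V,U})$; and (c) ``twisted'' contractions such as $\E(U\cdot V_\ast)(U_\ast\cdot V) = \sum_{i,j}(C_{U,V})_{ij}(C_{U,V})_{ji} = \Tr(C_{U,V}^2)$. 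Distinguishing (b) from (c) is the only spot where one can easily mis-index; apart from this it is purely mechanical.

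After substitution I would collect the surviving terms and regroup them so as to match the three underbraces in~\eqref{eq:eq_al}. The Lagrange block $\E(\abs{U}^2\abs{V}^2 - (U\cdot V)^2)$ is pointwise non-negative by Cauchy--Schwarz applied $\omega$ by $\omega$ in $\R^d$. The antisymmetric block equals $\norm{C_{U,V}-C_{V,U}}_F^2 \geq 0$, since for the antisymmetric matrix $M := C_{U,V} - C_{V,U}$ one has $\Tr(M \cdot (-M)) = -\Tr(M^2) = \Tr(M M^\top) = \norm{M}_F^2$; this also forces the identity $\Tr(C_{U,V}C_{V,U}) - \Tr(C_{U,V}^2) = \tfrac12 \norm{C_{U,V}-C_{V,U}}_F^2$ which is needed to see the block emerge from the pattern (b)$-$(c) combination above. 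The last (third) block is lower-bounded by exactly the quantity produced by Lemma~\ref{lem:trace}, which has already been established, so no further work is needed.

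The main obstacle is not conceptual but organisational: the raw expansion produces many terms, and only after regrouping ``by covariance pattern'' do the three blocks become visible. I would handle this with two bookkeeping tables, one for each expanded quantity, marking which monomials die by centering and then sorting the survivors according to whether they involve $\Tr(C_{U,U})\Tr(C_{V,V})$, $\Tr(C_{U,U}C_{V,V})$, $\Tr(C_{U,V}C_{V,U})$, $\Tr(C_{U,V}^2)$, or $(\Tr C_{U,V})^2$. Once the monomials are sorted this way, the three-block decomposition of~\eqref{eq:eq_al} drops out immediately, and the sign assertions follow from Cauchy--Schwarz, the antisymmetric-trace observation, and a direct invocation of Lemma~\ref{lem:trace}.
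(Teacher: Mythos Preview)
Your proposal is correct and follows essentially the same route as the paper: a direct expansion of the two expectations, elimination of odd-in-$(U_\ast,V_\ast)$ terms by centering and independence, translation of the survivors into the trace quantities $\Tr(C_{U,U})\Tr(C_{V,V})$, $\Tr(C_{U,U}C_{V,V})$, $\Tr(C_{U,V}C_{V,U})$, $\Tr(C_{U,V}^2)$, $\Tr(C_{U,V})^2$, and a final regrouping using the identity $\Tr\pare{(C_{U,V}-C_{V,U})(C_{V,U}-C_{U,V})}=2\Tr(C_{U,V}C_{V,U})-2\Tr(C_{U,V}^2)$. The paper's proof is terser but identical in substance, and the sign claims are justified exactly as you indicate (Cauchy--Schwarz, Frobenius norm of the antisymmetric part, and Lemma~\ref{lem:trace}).
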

\begin{proof} Before computing terms, recall that if $M,N$ are two square matrices, then
\[
\Tr\pare{M N} = \Tr\pare{NM} =\Tr\pare{M^T N^T}= \Tr\pare{N^T M^T}.
\]
Let us expand the alignement functional (the left hand side of~\eqref{eq:eq_al}). We have first,
  \begin{align*}
    \E\pare{ \abs{U-U_\ast}^2\abs{V-V_\ast}^2 } &= 2 \E\pare{\abs{U}^2\abs{V}^2} +2\E\pare{\abs{U}^2} \E \pare{\abs{V}^2}   + 4  \E\pare{U \cdot U_\ast \, V \cdot V_\ast} + 8 \times 0\\
&=  2 \E\pare{\abs{U}^2\abs{V}^2} + 2 \Tr\pare{C_{U,U}} \Tr\pare{C_{V,V}}  + 4 \Tr \pare{ C_{U,V} C_{V,U}},
  \end{align*}
and second,
\begin{align*}
   &\hspace{-1cm} \E\pare{\pare{ \pare{U-U_\ast} \cdot \pare{V-V_\ast}}^2} \\
=& 2 \E\pare{U \cdot V^2} +  2 \E\pare{U \cdot V}^2 +  2 \E \pare{U \cdot V_\ast^2} +  2 \E \pare{U_\ast \cdot  V \, U \cdot V_\ast} + 8 \times 0 \\
=& 2 \E\pare{U \cdot V^2} + 2 \Tr\pare{C_{U,V}}^{2}  + 2 \Tr \pare{C_{U,U}C_{V,V}} + 2 \Tr \pare{C_{U,V}^2}. 
  \end{align*}
On the other hand, 
\begin{align*}
    \Tr\pare{ \pare{C_{U,V} -C_{V,U}}\pare{C_{V,U} -C_{U,V}}} = -2 \Tr \pare{C^2_{U,V}} + 2\Tr \pare{C_{U,V}C_{V,U}},
  \end{align*}
and the result then follows.
\end{proof}
Two remarks.
\begin{Rem}
For \emph{normalized} ($\E(\abs{U}^2)=\E(\abs{V}^2)=1$) random vectors,
$
  \Tr \pare{ C_{U,U} } = \Tr \pare{C_{V,V} } = 1,
$
and
\begin{equation*}\label{eq:norm_coupl}
  \Tr\pare{C_{U,U} } \Tr\pare{ C_{V,V} } - \Tr \pare{C_{U,V} }^2 = 1- \E\pare{U \cdot V}^2 .
\end{equation*}
Then Lemma~\ref{lem:trace} and~\ref{lem:eq_al} immediately yield Theorem~\ref{the:fund_ineq}.
\end{Rem}
\begin{Rem}
  Assuming that  $\Tr \pare{C_{V,V} } = 1$ and $C_{U,U} = \fracd{1}{d} \Id$ (isotropy), then~\eqref{eq:eq_al} becomes
\begin{align}\label{eq:spe_2}
 &  1-\E\pare{U \cdot V}^2  \leq \frac{d}{d-1} \E\pare{ \abs{U-U_\ast}^2\abs{V-V_\ast}^2 - \pare{ \pare{U-U_\ast} \cdot \pare{V-V_\ast}}^2  }
\end{align}
Moreover, a sufficient condition for equality in~\eqref{eq:spe_2} is given by strongly istropic distributions with co-linear coupling, defined by the fact that the lengths $(\abs{U},\abs{V}) \in \R_+^2$ are independant of the identically coupled and uniformly distributed direction $ \fracd{U}{\abs{U}} (= \fracd{V}{\abs{V}} \, \as)$.
\end{Rem}

\subsection{Two counter-examples}\label{sec:cex}
Let us finally present two negative results, that demonstrates, in coupling / coupling creation inequalities, the { \bf necessity of sub-exponential estimates } on the one hand, and the  { \bf necessity to resort on higher moments of velocity distributions } on the other hand. We give the counter-examples in the form lemmas, with proofs. In both cases, we consider a coupled distribution in the form of random variables
\begin{equation}
  \label{eq:as_cex}
  (U,V)\in \R^{d}\times \R^{d}, \quad U \sim \calN(0,\fracd{1}{d} \Id), \quad \E V =0, \quad \E \abs{V}^2 =1 .
\end{equation}
$(U_{\ast},V_{\ast})$ is an i.i.d. copy.

\begin{Lem}[The necessity of higher order moments]
Let~\eqref{eq:as_cex} holds. Denote the moment of order $1<q<2$:
\begin{equation*}
 m_q \eqdef \E   \pare{   \abs{ V}^{q} }^{1/q},
  \end{equation*}
Then we have 
\[
\begin{cases}
 \lim_{m_q \to 0}   \E   \pare{   \abs{ U - V}^{2} } = 2 \\
  \lim_{m_q \to 0}  {\E\pare{ \abs{U-U_{\ast}}\abs{V-V_{\ast}} - \pare{U-U_{\ast}} \cdot \pare{V-V_{\ast}}}}= 0.
\end{cases}
\]
\end{Lem}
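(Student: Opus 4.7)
The proof plan is straightforward: both limits reduce to bounding correlations of $V$ against the fixed Gaussian $U$, and for this Hölder's inequality against the $L^{q'}$-norm of $V$ is essentially optimal, because $U$ being Gaussian means every dual $L^p$-norm of $|U|$ is finite.

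For the first limit, I would simply expand
\[
\E|U-V|^2 = \E|U|^2 + \E|V|^2 - 2\E(U\cdot V) = 2 - 2\E(U\cdot V),
\]
and apply Cauchy–Schwarz/Hölder with conjugate exponents $(q,q/(q-1))$:
\[
\abs{\E(U\cdot V)} \leq \E(|U||V|) \leq \bigl(\E|U|^{q/(q-1)}\bigr)^{(q-1)/q}\,m_q = C_{q,d}\,m_q,
\]
where the constant $C_{q,d}$ is finite because $U$ is Gaussian (all moments finite). Letting $m_q\to 0$ then gives $\E|U-V|^2 \to 2$.

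For the second limit, the scalar-product term reduces by independence of $(U,V)$ from $(U_\ast,V_\ast)$ and the centering hypotheses to
\[
\E\bigl((U-U_\ast)\cdot (V-V_\ast)\bigr) = 2\,\E(U\cdot V) - 2\,\E(U)\cdot \E(V) = 2\,\E(U\cdot V),
\]
which vanishes in the limit by the first step. For the product of norms, I apply Hölder the same way:
\[
\E\bigl(|U-U_\ast|\,|V-V_\ast|\bigr) \leq \bigl(\E|U-U_\ast|^{q/(q-1)}\bigr)^{(q-1)/q}\,\bigl(\E|V-V_\ast|^q\bigr)^{1/q},
\]
the first factor is finite by Gaussianity, and the second is controlled by the triangle inequality $|V-V_\ast|^q \leq 2^{q-1}(|V|^q + |V_\ast|^q)$, giving $\bigl(\E|V-V_\ast|^q\bigr)^{1/q} \leq 2\,m_q$. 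Hence this term also vanishes.

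Combining, both $\E|U-U_\ast||V-V_\ast|$ and $\E((U-U_\ast)\cdot(V-V_\ast))$ tend to $0$, so their difference tends to $0$ as claimed. There is no genuine obstacle here; the only thing to notice carefully is that the Gaussianity of $U$ (not merely its normalization) is what allows one to use the dual exponent $q/(q-1) > 2$, thereby converting the hypothesis on $m_q < m_2 = 1$ into an actual convergence statement. Concretely, this is the reason the counterexample \emph{only} works for heavy-tailed $V$: one constructs $V$ as a ``needle and atom'' law (for instance, mass $p$ uniformly spread on the sphere of radius $p^{-1/2}$ and mass $1-p$ at the origin), so that $m_2 = 1$ is preserved while $m_q = p^{1/q - 1/2}\to 0$ as $p\to 0^+$, and the argument above yields the degeneracy of the coupling creation against the non-vanishing coupling distance.
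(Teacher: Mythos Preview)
Your proof is correct and follows essentially the same route as the paper: both limits are obtained via H\"older's inequality, exploiting that $U$ is Gaussian so that its $q/(q-1)$-moments are finite. The only cosmetic difference is that the paper bounds the second expression in one stroke by $2\,\E\bigl(|U-U_\ast|\,|V-V_\ast|\bigr)$ (using $a - b \leq 2a$ when $|b|\leq a$) before applying H\"older, whereas you treat the product term and the scalar-product term separately; your added remark on the ``needle and atom'' construction is a useful clarification of what the limit $m_q\to 0$ actually means under the constraint $\E|V|^2=1$.
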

\begin{proof}
  Hölder inequality implies that $\lim_{m_q \to 0} \E   \pare{   \abs{ V-U }^2 }= 2 \neq 0$. Moreover, we have, with again Hölder inequality,
    \begin{align*}
&      \E \pare{ \abs{U -U_{\ast}}\abs{V-V_{\ast}} - \pare{U-U_{\ast}} \cdot \pare{V-V_{\ast}} } \leq 2  \E \pare{ \abs{U-U_{\ast}}\abs{V-V_{\ast}} }  \\
& \hspace{1cm} \leq   \E^{1/p}\pare{ \abs{U-U_{\ast}}^{p}} \E^{1/q}\pare{ \abs{V-V_{\ast}}^q } \xrightarrow[m_q \to +\infty]{} 0.
    \end{align*}
\end{proof}

\begin{Lem}[The necessity of sub-exponential rates]
 Let~\eqref{eq:as_cex} holds. Consider the co-linear coupling
\[
\fracd{V}{\abs{V}} \eqdef \fracd{U}{\abs{U} } \quad \as, 
\]
with moreover the following radial coupling perturbation on some interval $0 < r_- < r_+ < +\infty $:
\[
\abs{V} \eqdef \abs{U} \one{\abs{U} \notin [r_-,r_+]} + \E^{1/2}\pare{\abs{U}^2 \cond{} \abs{U} \in [r_-,r_+]}  \one{\abs{U} \in [r_-,r_+]}.
\]
Then (i) the moments of $\Law(V)$ are uniformly bounded in $r_-,r_+$; (ii) we have the following degeneracy of the coupling - coupling creation estimate
\[
\lim_{r_- \to + \infty}\lim_{r_+ \to r_-} \fracd{\E \pare{ \abs{U-U_{\ast}}\abs{V-V_{\ast}} - \pare{U-U_{\ast}} \cdot \pare{V-V_{\ast}}}} {\E   \pare{   \abs{ V-U }^2}} = 0 .
\]
\end{Lem}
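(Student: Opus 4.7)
For (i), set $A := \set{\abs{U} \in [r_-,r_+]}$ and $\alpha := \E(\abs{U}^2 \cond{} A)^{1/2}$, so that $V = U$ on $A^c$ and $\abs{V} = \alpha$ on $A$. For any $k > 0$,
\[
\E\abs{V}^k = \E\p{\abs{U}^k \one{A^c}} + \alpha^k \P(A), \qquad \alpha^k\P(A) = \E\p{\abs{U}^2 \one{A}}^{k/2}\P(A)^{1-k/2}.
\]
H\"older's inequality (for $k \geq 2$) or the trivial bound $\P(A)^{1-k/2} \leq 1$ (for $0 < k < 2$) gives $\alpha^k\P(A) \leq \E\abs{U}^k + (\E\abs{U}^2)^{k/2}$, and so $\E\abs{V}^k$ is bounded uniformly in $(r_-,r_+)$ because $U$ is Gaussian.

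\paragraph{Core algebraic identity.} For (ii), abbreviate $a := U - U_\ast$, $b := V - V_\ast$, $w := U - V$, so $b = a - w$. A direct expansion of $\abs{a-w}^2$ and $a\cdot(a-w)$ yields the identity
\[
\abs{a}^2\abs{b}^2 - (a \cdot b)^2 = \abs{a}^2\abs{w}^2 - (a \cdot w)^2,
\]
whence, via the standard conjugate factorization,
\[
\Delta := \abs{a}\abs{b} - a \cdot b = \fracd{\abs{a}^2\abs{w}^2 - (a\cdot w)^2}{\abs{a}\abs{b} + a\cdot b} \geq 0.
\]
I would split $\E\Delta$ into the four regions depending on whether $\abs{U}$ and $\abs{U_\ast}$ belong to $[r_-,r_+]$. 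On $A^c \cap A_\ast^c$, $w = 0$ and $\Delta = 0$; on $A \cap A_\ast$ the probability $\P(A)^2 = o(\P(A))$ as $r_+ \to r_-$ makes this region subleading; the dominant contributions come from the symmetric events $A \cap A_\ast^c$ and $A^c \cap A_\ast$. On $A \cap A_\ast^c$ one has $w = \epsilon \hat U$ with $\abs{\epsilon} \leq r_+ - r_-$ and $a = \abs{U}\hat U - U_\ast$, and the $\abs{U}$-dependent terms cancel to yield the clean formula
\[
\abs{a}^2\abs{w}^2 - (a\cdot w)^2 = \epsilon^2 \p{\abs{U_\ast}^2 - (U_\ast \cdot \hat U)^2}.
\]

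\paragraph{Taking the limit, and main obstacle.} A Gaussian tail estimate on $U_\ast$ ensures that with probability tending to $1$ as $r_- \to +\infty$, the denominator satisfies $\abs{a}\abs{b} + a\cdot b \geq c_1 r_-^2$ uniformly. Integrating against the law of $(U,U_\ast)$ gives $\E\p{\Delta \one{A \cap A_\ast^c}} \leq C(r_+-r_-)^2 r_-^{-2} \P(A) + o(\P(A))$ as $r_+ \to r_-$. The denominator of the ratio is $\E\abs{U-V}^2 = \mathrm{Var}(\abs{U}\cond{} A)\P(A)$, and strict positivity of the radial Gaussian density of $U$ near any fixed $r_- > 0$ yields $\mathrm{Var}(\abs{U}\cond{} A) \geq c_2(r_-)(r_+-r_-)^2$ as $r_+ \to r_-$. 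Combining these estimates, the limit as $r_+ \to r_-$ of the ratio is at most $C/(c_2(r_-) r_-^2)$, which vanishes as $r_- \to +\infty$. The main technical difficulty is establishing the uniform lower bounds on $\abs{a}\abs{b}+a\cdot b$ and $\mathrm{Var}(\abs{U}\cond{} A)$ so that the inner limit $r_+ \to r_-$ can be taken rigorously and cleanly before letting $r_- \to +\infty$.
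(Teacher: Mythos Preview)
Your approach differs from the paper's. The paper does not estimate $\E|U-V|^2$ directly; instead it invokes the \emph{equality} case of the special inequality~\eqref{eq:fund_ineq} (valid for strongly isotropic co-linear couplings) to replace the denominator by a constant times $\E\bigl(|a|^2|b|^2(1-A^2)\bigr)$, where $A := a\cdot b/(|a||b|)$. Since both $1-A$ and $1-A^2$ vanish identically on $\{|U|\notin[r_-,r_+]\}\cap\{|U_\ast|\notin[r_-,r_+]\}$, numerator and denominator can be bounded by the same indicator weights, and the limit $r_+\to r_-$ collapses cleanly to a ratio of moments of $|R_U - U_\ast|$ with $R_U$ uniform on the sphere of radius $r_-$. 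This sidesteps the variance computation you flag as the main difficulty.

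Your direct route can be made to work, but two points need repair. First, a slip: $b = a - w$ is false in general, since $b = a - w + w_\ast$ with $w_\ast := U_\ast - V_\ast$; your ``core algebraic identity'' thus holds only on $A_\ast^c$, which is where you actually use it, but the presentation should reflect that. Second and more seriously, your treatment of $A\cap A_\ast$ is incomplete. The observation $\P(A)^2 = o(\P(A))$ does \emph{not} by itself make this region subleading: the naive bound $\Delta \leq 2|a||b| = O(r_-^2)$ yields a ratio contribution of order $r_-^2\P(A)/(r_+-r_-)^{2}\cdot (r_+-r_-)\sim r_-^2 f(r_-)/(r_+-r_-)$, which diverges as $r_+\to r_-$. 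You need the refined bound on $A\cap A_\ast$: there $V = \alpha\hat U$, $V_\ast = \alpha\hat U_\ast$, and a short computation gives $|a|^2|b|^2 - (a\cdot b)^2 = \alpha^2\bigl(1-(\hat U\cdot\hat U_\ast)^2\bigr)(|U|-|U_\ast|)^2 \leq r_+^2(r_+-r_-)^2$, so (using $a\cdot b\geq 0$ there) $\Delta \leq r_+(r_+-r_-)$; the ratio contribution is then $O(r_- f(r_-))$, which does vanish as $r_-\to\infty$. A minor point: $\E|U-V|^2$ is not literally $\mathrm{Var}(|U|\mid A)\,\P(A)$ because $\alpha = \E(|U|^2\mid A)^{1/2} \neq \E(|U|\mid A)$, though the two expressions agree to leading order as $r_+\to r_-$.
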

\begin{proof}
  First, for such isotropic ($U$ is normally distributed) and co-linear couplings, the key inequality~\eqref{eq:fund_ineq} is in fact an equality. Denoting:
\[
A \eqdef \fracd{ \pare{U-U_\ast} \cdot \pare{V-V_\ast} }{\abs{U-U_\ast}\abs{V-V_\ast}},
\]
we obtain 
\[
\calR(r_-,r_+) \eqdef \fracd{\E \pare{ \abs{U-U_{\ast}}\abs{V-V_{\ast}} - \pare{U-U_{\ast}} \cdot \pare{V-V_{\ast}}}}{E   \pare{   \abs{ V-U }^2}}  \leq  \fracd{2d-2}{d} \fracd {\E\pare{\abs{U-U_\ast} \abs{V-V_\ast} \, (1-A)}}{\E\pare{\abs{U-U_\ast}^2 \abs{V-V_\ast}^2 \, (1-A^2) }}.
\]
Since $\abs{A} \leq 1$ and $A=1$ when both $\abs{U} \notin [r_-,r_+]$ and $\abs{U_\ast} \notin [r_-,r_+] $, we have
\begin{align*}
  (1-A) \leq  (1-A^2) \pare{ \one{\abs{U} \in [r_-,r_+]  } + \one{\abs{U_\ast} \in [r_-,r_+]  }} \, \as, \\
2 (1-A^2) \geq  (1-A^2) \pare{ \one{\abs{U} \in [r_-,r_+]  } + \one{\abs{U_\ast} \in [r_-,r_+]  }} \, \as,
\end{align*}
and the smoothness of Gaussian density yields
\[
\lim_{r_+ \to r_-} \calR(r_-,r_+) \leq  \fracd{4(d-1)}{d} \fracd{\E\pare{\abs{R_U-U_\ast} \abs{R_U-U_\ast}}}{\E\pare{\abs{R_U-U_\ast}^2 \abs{R_U-U_\ast}^2} },
\]
where $R_U$ is distributed uniformly on the sphere with radius $r_-$ and is independant of $U_\ast$. In the limit $r_- \to + \infty$, dominated convergence implies $\lim_{r_+ \to r_-} \calR(r_-,r_+)  = O(r_{-}^{-2} )$, hence the result.
\end{proof}

\section{Proofs}\label{sec:proofs}

\subsection{Applying Hölder's inequality}\label{sub:holder}
In this section, we prepare the trend to equilibrium analysis. For this puprose, we will apply Hölder's inequality two times to the special inequality~\eqref{eq:fund_ineq}; a first time with respect to particle averaging $\bracket{\, . \, }_N$, and second time with respect to the expectation $ \E(\, . \,) $.

\begin{Lem}\label{lem:weak_ineq}
  Let $(\pN{u},\pN{v}) \in (\R^d\times \R^d)^N$ satisfying the centering and normalization condition~\eqref{eq:cons} ($\bracket{u}_{N} = \bracket{v}_{N} =0$ and $ \bracket{\abs{u}^2}_{N} = \bracket{\abs{v}^2}_{N} =1$); as well as the positive correlation assumption $\bracket{ u \cdot v }_{N} \geq 0$. For any $\delta >0 $, and $p,q >1$ with $1/p + 1/q =1$, we have the inequality:
\begin{align*}
c_{\delta,p}(u,v)  \leq \frac{1}{2} \frac{ \cc\p{u,v} } { \bracket{ \abs{\pN{u}-\pN{v}}^2  }_N^{1 + 1/2\delta} } ,
\end{align*}
with
 \begin{align*}
& c_{\delta,p}(u,v)  =  \\ 
& \quad k_{1,\delta } \min\pare{\kappa_{ \bracket{\pN{u} \otimes \pN{u}}_N}  ,\kappa_{ \bracket{\pN{v} \otimes \pN{v}}_N}  }^{-1-1/2\delta} \bracket{ \abs{\pN{u}-\pN{u}_\ast}^{2p(1+\delta)} }_N ^{-1/2p\delta} \bracket{ \abs{\pN{v}-\pN{v}_\ast}^{2q(1+\delta)} }_N ^{-1/2 q \delta},
  \end{align*}
and constant
\[
k_{1,\delta} = 2^{-3 -1/2\delta} \frac{d-2}{d-1} \frac{\p{ 1 + \delta}^{1 + 1/\delta}}{\p{ 1 + 2 \delta}^{1+1/2\delta}} .
\]
Note that $c_{\delta,p}(u,v)  = 0 $ if and only if $\bracket{\pN{u} \otimes \pN{u}}_N $ and $\bracket{\pN{v} \otimes \pN{v}}_N $ are of (minimal) rank $1$.
\end{Lem}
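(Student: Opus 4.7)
The plan is to chain three ingredients: the specialized fundamental inequality~\eqref{eq:fund_ineq_uv}, an elementary factorization that isolates the coupling creation, and a two-stage H\"older interpolation with exponents tuned to produce simultaneously the power $\alpha := 1 + 1/(2\delta)$ on the coupling distance and moments of orders $2p(1+\delta)$ and $2q(1+\delta)$. Under the normalization and the positive correlation $\bracket{u \cdot v}_N \geq 0$, Cauchy--Schwarz on the particle average gives $\bracket{u \cdot v}_N \in [0,1]$, hence $\bracket{\abs{u-v}^2}_N = 2(1-\bracket{u \cdot v}_N) \leq 2(1-\bracket{u \cdot v}_N^2)$. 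I would then apply~\eqref{eq:fund_ineq_uv} (that is, Theorem~\ref{the:fund_ineq} lifted to the particle probability space $([1,N], \bracket{\cdot}_N)$) and factor $a^2 - b^2 = (a-b)(a+b) \leq 2 a W$ (pointwise, since $\abs{b} \leq a$) with $a := \abs{u-u_\ast}\abs{v-v_\ast}$, $b := (u-u_\ast) \cdot (v-v_\ast)$, $W := a - b \geq 0$, to obtain
\[
\bracket{\abs{u-v}^2}_N \leq 4 \kappa \bracket{a W}_N, \qquad \kappa := \min\pare{\kappa_{\bracket{u \otimes u}_N}, \kappa_{\bracket{v \otimes v}_N}},
\]
where crucially $\bracket{W}_N = \frac{2(d-1)}{d-2}\cc(u,v)$ by~\eqref{eq:coupl_crea}.

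The key step is then a two-stage H\"older interpolation, designed so that $\bracket{W}_N$ ends up at the power $1$ after the final rescaling. Setting $\theta := 1/\alpha = 2\delta/(2\delta+1)$, one checks the algebraic identity $(2-\theta)/(1-\theta) = 2(1+\delta)$; the pointwise bound $W \leq 2a$ yields $a W \leq 2^{1-\theta} W^\theta a^{2-\theta}$, and H\"older with the conjugate pair $(1/\theta, 1/(1-\theta))$ gives
\[
\bracket{a W}_N \leq 2^{1-\theta} \bracket{W}_N^\theta \bracket{a^{2(1+\delta)}}_N^{1-\theta}.
\]
A second H\"older with conjugate exponents $p, q$ decouples $a^{2(1+\delta)} = \abs{u-u_\ast}^{2(1+\delta)} \abs{v-v_\ast}^{2(1+\delta)}$ into the two single-variable moment factors featuring in $c_{\delta,p}(u,v)$. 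Raising the chained inequality to the power $\alpha$ and using $\alpha(1-\theta) = \alpha - 1 = 1/(2\delta)$ converts $\bracket{W}_N^\theta$ into $\bracket{W}_N$ and the moment exponents into $1/(2p\delta)$ and $1/(2q\delta)$, exactly as stated; substituting $\bracket{W}_N = \frac{2(d-1)}{d-2}\cc(u,v)$ and rearranging produces the inequality in the claimed form.

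The hard part is essentially the bookkeeping of the H\"older exponents, together with sharpening the numerical prefactor to the stated $k_{1,\delta}$: this will require replacing the crude pointwise estimate $W \leq 2a$ by a one-parameter Young-type refinement (a weighted splitting $W^{1-\theta} a \leq \lambda\,(\,\cdot\,) + (1-\lambda)\,(\,\cdot\,)$) and optimizing over its weight, producing the elementary factor $(1+\delta)^{1+1/\delta}/(1+2\delta)^{1+1/(2\delta)}$ as the resulting maximum. The degenerate case $\kappa = +\infty$ (both empirical covariances $\bracket{u \otimes u}_N, \bracket{v \otimes v}_N$ of rank one) makes $c_{\delta,p}(u,v) = 0$, so the inequality is then trivially satisfied.
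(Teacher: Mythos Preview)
Your approach is essentially the paper's: apply the fundamental inequality~\eqref{eq:fund_ineq_uv} on the particle probability space, then interpolate via H\"older with the conjugate pair $(1+2\delta,\,1+1/(2\delta))$ to isolate $\bracket{W}_N=\bracket{\alpha_-}_N$, and finish with a second H\"older in $(p,q)$. Your exponent bookkeeping ($\theta=2\delta/(2\delta+1)$, $(2-\theta)/(1-\theta)=2(1+\delta)$, $\alpha(1-\theta)=1/(2\delta)$) is correct and matches the paper.

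The only substantive discrepancy is in how the sharp constant $k_{1,\delta}$ arises. You make the lossy replacement $\alpha_+=(a+b)\leq 2a$ \emph{before} applying H\"older, and then propose to recover the factor $(1+\delta)^{1+1/\delta}/(1+2\delta)^{1+1/(2\delta)}$ by a Young-type refinement of the later bound $W\leq 2a$. That will not work: the information lost in $\alpha_+\leq 2a$ cannot be restored downstream. The paper instead keeps $\alpha_+$ intact, writes $I=\bracket{\alpha_+\alpha_-}_N\leq\bracket{\alpha_+^{b}\alpha_-}_N^{1/b}\bracket{\alpha_-}_N^{1/a}$ with $b=1+2\delta$, and then applies the sharp one-variable inequality
\[
(1+\tau)^{b}(1-\tau)\;\leq\;\frac{2^{\,b+1}\,b^{\,b}}{(b+1)^{\,b+1}},\qquad \tau\in[-1,1],
\]
(optimum at $\tau=(b-1)/(b+1)$) with $\tau=(u-u_\ast)\cdot(v-v_\ast)/(\abs{u-u_\ast}\abs{v-v_\ast})$, which yields $\alpha_+^{b}\alpha_-\leq C\,a^{\,b+1}=C\,a^{2(1+\delta)}$ pointwise. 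This single optimization is precisely what produces $(1+\delta)^{1+1/\delta}/(1+2\delta)^{1+1/(2\delta)}$; no additive Young splitting is involved. With this correction your argument coincides with the paper's.
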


\begin{proof}
  We first apply~\eqref{eq:fund_ineq} on the pair $(\pN{u},\pN{v})$ with respect to the probability space generated by the particle averaging operator $\bracket{ \quad }_N$. We obtain using the positive correlation condition~$\bracket{u \cdot v}_{N} \geq 0 $:

\begin{align*}
& \min \p{\kappa_{\bracket{\pN{u} \otimes \pN{u}_\ast}_N} ,\kappa_{\bracket{\pN{v} \otimes \pN{v}_\ast}_N} }^{-1} \frac12 \bracket{ \abs{\pN{u}-\pN{v}}^2}_N \leq 
 \bracket{ \abs{\pN{u}-\pN{u}_\ast}^2\abs{\pN{v}-\pN{v}_\ast}^2 - \pare{ \pare{\pN{u}-\pN{u}_\ast} \cdot \pare{\pN{v}-\pN{v}_\ast}}^2  }_N \eqdef I.
\end{align*}
Next, let us denote for $\eps \in \set{+1,-1}$
\[
\al_{\eps} = \abs{\pN{u}-\pN{u}_\ast} \abs{\pN{v}-\pN{v}_\ast} + \eps  \pare{\pN{u}-\pN{u}_\ast} \cdot \pare{\pN{v}-\pN{v}_\ast},
\]
and introduce $b = 1 + 2 \delta $, $a =1+1/2 \delta$ so that $1/a +1/b = 1$. Using Hölder inequality yields
\begin{align*}
&  I = \bracket{  \al_{+} \al_{-}  }_N  = \bracket{  \al_{+}\al_{-}^{1/b}  \al_{-}^{1/a}  }_N  \leq \bracket{ \al_{+}^{b} \al_{-}}_N^{1/b}  \bracket{  \al_{-}}_N ^{1/a} .
\end{align*}
The elementary (sharp) inequality 
\[
(1+\theta)(1-\theta)^{1/b}\leq b\pare{\fracd{2}{b+1}}^{1/b+1} \qquad \forall \theta \in [-1,1],
\]
used for $\theta = \pare{\pN{u}-\pN{u}_\ast} \cdot \pare{\pN{v}-\pN{v}_\ast} / \abs{\pN{u}-\pN{u}_\ast}\abs{\pN{v}-\pN{v}_\ast} $ then yields
\begin{align*}
I  \leq  b\pare{\fracd{2}{b+1}}^{1/b+1} \bracket{\abs{\pN{u}-\pN{u}_\ast}^{1+b}\abs{\pN{v}-\pN{v}_\ast}^{1+b} }_N^{1/b} \bracket{\al_{-}}^{1/a}_N.
\end{align*}
Finally, remarking that $b \pare{\fracd{2}{b+1}}^{1/b + 1} = \fracd{1+2 \delta}{(1+\delta)^{\frac{2+2\delta}{1+2\delta}}}$, and applying again Hölder inequality with $1/p+1/q =1$ yields
\[
I^{b=1+2\delta}  \leq \frac{(1+2 \delta)^{1+2\delta}}{(1+\delta)^{{2+2\delta}}} \bracket{\abs{\pN{u}-\pN{u}_\ast}^{2p(1+\delta)} }_N^{1/p} \bracket{ \abs{\pN{v}-\pN{v}_\ast }^{2q(1+\delta)} }_N^{1/p} \bracket{\al_{-}}^{b/a = 2\delta }_N.
\]
The result follows.
\end{proof}

The latter lemma implies the following corollary on random vectors with positive correlations.

\begin{Lem}\label{lem:hold_0}
 Let $(U,V) \in (\R^d)^N \times (\R^d)^N $ satisfying the centering and normalization condition (conservation laws)~\eqref{eq:cons}, as well as the positive correlation assumption $\bracket{ U \cdot V }_{N} \geq 0 \as$. Denote $\Law(U)  = \pi_{1} $, $\Law(V)  = \pi_2$. Let $\delta > 0 $ and $ q > 1$ be given. We have:
\begin{equation}
  \label{eq:cc_plaw}
  c_{\delta,p}(\pi_1,\pi_2) \leq  \frac{1}{2} \frac{ \E \, \cc\p{U,V} } { \E \p{ \bracket{ \abs{\pN{U}-\pN{V}}^2  }_N }^{1 + 1/2\delta} } ,
\end{equation}
where in the above:
\begin{align*}
 c_{\delta,p}(\pi_1,\pi_2)  =  k_{2,\delta} \E \p{ \kappa_{ \bracket{\pN{U} \otimes \pN{U}}_N} ^{p(1+2\delta)} \bracket{ \abs{\frac{U-U_\ast}{\sqrt{2}}}^{2p(1+\delta)} }_N }^{-1/2p\delta}  \E \p{  \bracket{ \abs{\pN{V}}^{2q(1+\delta)} }_N }^{-1/2 q \delta},
  \end{align*}
with constant
\[
k_{2,\delta} = {2^{-9/2 - 2/\delta}}\frac{d-2}{d-1} \frac{\p{ 1 + \delta}^{1 + 1/\delta}}{\p{ 1 + 2 \delta}^{1+1/2\delta}} .
\]
\end{Lem}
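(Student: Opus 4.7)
The plan is to invoke Lemma~\ref{lem:weak_ineq} pointwise, take the ambient expectation, and then apply a three-factor H\"older inequality that redistributes the $U$- and $V$-dependent weights into the independent moments $M_U$ and $M_V$ appearing in $c_{\delta,p}(\pi_1,\pi_2)$.

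First, using the crude bound $\min(\kappa_{\bracket{\pN{U}\otimes\pN{U}}_N}, \kappa_{\bracket{\pN{V}\otimes\pN{V}}_N}) \leq \kappa_{\bracket{\pN{U}\otimes\pN{U}}_N}$, Lemma~\ref{lem:weak_ineq} implies the pointwise inequality $\cc(U,V) \geq 2 k_{1,\delta}\, A(U) B(V) W^{1+1/(2\delta)}$, where $W \eqdef \bracket{\abs{U-V}^2}_N$, and the two factors $A(U) \eqdef \kappa_{\bracket{\pN{U}\otimes\pN{U}}_N}^{-(1+1/(2\delta))} \bracket{\abs{U-U_\ast}^{2p(1+\delta)}}_N^{-1/(2p\delta)}$ and $B(V) \eqdef \bracket{\abs{V-V_\ast}^{2q(1+\delta)}}_N^{-1/(2q\delta)}$ depend respectively only on $U$ or $V$. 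Taking the expectation yields $\E\,\cc(U,V) \geq 2 k_{1,\delta}\, \E[A(U) B(V) W^{1+1/(2\delta)}]$.

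The key combinatorial step is the identity, with $\alpha \eqdef 2\delta/(2\delta+1) = 1/(1+1/(2\delta))$,
\[
W = \bigl(A B W^{1+1/(2\delta)}\bigr)^{\alpha} \cdot A^{-\alpha} \cdot B^{-\alpha},
\]
which holds tautologically since $\alpha(1+1/(2\delta)) = 1$ and the $A, B$ exponents cancel. A three-factor H\"older inequality with exponents $r_1 \eqdef (2\delta+1)/(2\delta)$, $r_2 \eqdef p(2\delta+1)$, $r_3 \eqdef q(2\delta+1)$---whose reciprocals sum to $[2\delta + 1/p + 1/q]/(2\delta+1) = 1$ by conjugacy of $p,q$, all larger than $1$---then gives
\[
\E W \leq \E\bigl[ABW^{1+1/(2\delta)}\bigr]^{\alpha} \E[A^{-2p\delta}]^{1/(p(2\delta+1))} \E[B^{-2q\delta}]^{1/(q(2\delta+1))},
\]
and raising both sides to the power $1/\alpha = 1+1/(2\delta)$ produces the cleaner
\[
(\E W)^{1+1/(2\delta)} \leq \E\bigl[ABW^{1+1/(2\delta)}\bigr]\cdot \E[A^{-2p\delta}]^{1/(2p\delta)} \E[B^{-2q\delta}]^{1/(2q\delta)}.
\]

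The negative-power moments of $A$ and $B$ are then identified with the data in $c_{\delta,p}(\pi_1,\pi_2)$. Direct expansion yields $A^{-2p\delta} = 2^{p(1+\delta)} \kappa_{\bracket{\pN{U}\otimes\pN{U}}_N}^{p(1+2\delta)} \bracket{\abs{(U-U_\ast)/\sqrt{2}}^{2p(1+\delta)}}_N$, so $\E[A^{-2p\delta}]$ is exactly $2^{p(1+\delta)} M_U$. For $B$, the convexity bound $\abs{v-v_\ast}^r \leq 2^{r-1}(\abs{v}^r + \abs{v_\ast}^r)$ (valid for $r = 2q(1+\delta) \geq 1$) combined with particle-averaging gives $\bracket{\abs{V-V_\ast}^r}_N \leq 2^r \bracket{\abs{V}^r}_N$, hence $\E[B^{-2q\delta}] \leq 2^{2q(1+\delta)} M_V$. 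Assembling everything, one obtains the stated inequality with $k_{2,\delta} = k_{1,\delta}/2^{3(1+\delta)/(2\delta)}$, and the arithmetic $-3 - 1/(2\delta) - 3(1+\delta)/(2\delta) = -9/2 - 2/\delta$ reproduces the announced constant.

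The main obstacle is not any single calculation but finding the right H\"older decomposition: a direct three-factor H\"older applied to $\E[ABW^{1+1/(2\delta)}]$ with the ``natural'' exponents $2p\delta, 2q\delta$ would leave a residual exponent $1 - 1/(2\delta)$ on $\cc$, positive only for $\delta > 1/2$. The trick of instead applying H\"older to the first-order expectation $\E[W]$, with the compensating exponent $r_1 = 1 + 1/(2\delta)$ absorbing the surplus, and powering up only at the end, handles all $\delta > 0$ in a uniform way.
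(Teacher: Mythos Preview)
Your proof is correct and follows essentially the same strategy as the paper: invoke Lemma~\ref{lem:weak_ineq} pointwise, then redistribute via H\"older to separate the $\cc$, $U$-moment, and $V$-moment factors. The only organizational difference is that the paper applies a two-factor H\"older first (writing $\E W \leq \E[W^{1+2\delta}\cc^{-2\delta}]^{1/(1+2\delta)} (\E\cc)^{2\delta/(1+2\delta)}$), bounds $W^{1+2\delta}\cc^{-2\delta}$ pointwise via Lemma~\ref{lem:weak_ineq}, and then applies a second H\"older with exponents $p,q$ to split $A$ from $B$; you instead collapse these two steps into a single three-factor H\"older with exponents $r_1=1+1/(2\delta)$, $r_2=p(2\delta+1)$, $r_3=q(2\delta+1)$. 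The resulting inequality and the constant $k_{2,\delta}=k_{1,\delta}\cdot 2^{-3(1+\delta)/(2\delta)}=2^{-9/2-2/\delta}\frac{d-2}{d-1}\frac{(1+\delta)^{1+1/\delta}}{(1+2\delta)^{1+1/(2\delta)}}$ are identical.
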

\begin{proof}
  Hölder's inequality with $1/p + 1/q =1$ implies
\[
\E\p{d(U,V)^{2}} \leq \E\p{d(U,V)^{2q} \cc(U,V)^{-q/p}}^{1/q} \E\p{\cc(U,V)}^{1/p}.
\]
Then the result follows by taking $p=1+1/2\delta$ and $q=1 + 2 \delta$, together with the standard inequality for $n \geq 1$:
\[
\abs{v-v_\ast}^n \leq 2^{n-1} \p{ \abs{v}^n + \abs{v_\ast}^n  }.
\]
\end{proof}

\subsection{Moments of a Wishart's eigenvalues ratio}

In this section, we detail an estimate of the moments of the random condition number $\kappa(\bracket{U \otimes U}_N) = (1- \lambda_{\rm max}(\bracket{U \otimes U}_N))^{-1} $, when $U$ is distributed uniformly on the sphere $\S^{dN-d-1}$ defined by the centering and normalization condition used in the present paper: $\bracket{U \otimes U}_N=0$ , and $\Tr \p{\bracket{U \otimes U}_N} =1 $.

We start with a well-known result from random matrix theory:
\begin{Lem}\label{lem:wish}
  Let $U \sim {\rm unif}_{\S^{dN-d-1}}$, the uniform probability distribution on the sphere $\S^{dN-d-1}$ defined by $\bracket{U \otimes U}_N=0$ , and $\Tr \p{\bracket{U \otimes U}_N} =1 $. The order statistics $ 0 \leq L^N_1 \leq \ldots \leq L^N_d \leq 1$ of the eigenvalues of $ \bracket{U \otimes U}_N $ are distributed according to
  \begin{equation}
    \label{eq:wish_ens}
    \frac{1}{Z(N-1,d)} \prod_{i=1}^d l_i^{(N-2-d)/2} \prod_{1\leq i < j \leq d} \p{l_j - l_i } \one{ 0 \leq l_1 \leq \ldots \leq l_d \leq 1} {\rm vol}_{l_1+ \ldots + l_d =1}(\d l),
  \end{equation}
where ${\rm vol}$ is induced by the canonical euclidean structure of $\R^d$, and the normalization constant satisfies:
\[
Z(N-1,d) \eqdef \pi^{d/2} \Gamma\p{(N-1)d/2}  / \prod_{i=0}^{d-1} \Gamma\p{ (d-i)/2} \Gamma\p{ (N-1-i)/2} .
\]
\end{Lem}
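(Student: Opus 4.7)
The plan is to realize $U$ as the normalization of a Gaussian vector projected onto the centered subspace, then to reduce to the classical Wishart eigenvalue density.

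\textbf{Step 1 (Gaussian representation).} I would let $V=(V_{(1)},\ldots,V_{(N)})$ be i.i.d.\ standard Gaussians in $\R^d$, and set $\tilde V_{(n)} \eqdef V_{(n)} - \bracket{V}_N$. The vector $\tilde V$ is supported on the $d(N-1)$-dimensional subspace $H = \{v \in (\R^d)^N : \bracket{v}_N = 0\}$, and by orthogonal invariance its law is the isotropic Gaussian on $H$. Conditionally on the rescaled norm $\sqrt{\bracket{|\tilde V|^2}_N}$, the direction $\tilde V / \sqrt{\bracket{|\tilde V|^2}_N}$ is uniform on the unit sphere of $H$ for the rescaled euclidean structure, which is precisely $\S^{dN-d-1}$. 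Hence one has the equality in distribution
\[
U \overset{d}{=} \frac{\tilde V}{\sqrt{\bracket{|\tilde V|^2}_N}}, \qquad \bracket{U \otimes U}_N \overset{d}{=} \frac{W}{\Tr(W)}, \quad W \eqdef \frac{1}{N}\sum_{n=1}^N \tilde V_{(n)} \otimes \tilde V_{(n)}.
\]

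\textbf{Step 2 (Wishart identification).} Viewing $V$ as an $N\times d$ array, the centering is $\tilde V = P V$ with $P = \Id_N - \frac{1}{N}\mathbf{1}\mathbf{1}^T$, a rank $N-1$ orthogonal projector. Writing $P = Q Q^T$ with $Q$ of size $N\times(N-1)$ having orthonormal columns, and setting $Z = Q^T V$, the matrix $Z$ has i.i.d.\ standard Gaussian entries and
\[
N W = V^T P V = Z^T Z,
\]
so $N W$ is a real Wishart $W_d(N-1, \Id)$ matrix. Assuming $N-1 \geq d$ (the degenerate cases are recovered by taking support considerations into account), the classical joint density of the ordered eigenvalues $0 \leq w_1 \leq \ldots \leq w_d$ of $Z^T Z$ (whose derivation uses an orthogonal $QR$-decomposition and the Weyl integration formula, yielding the Vandermonde factor) is
\[
\frac{1}{\widetilde Z(N-1,d)} \prod_{i=1}^d w_i^{(N-d-2)/2} e^{-w_i/2} \prod_{i<j}(w_j - w_i) \,\one{0 \leq w_1 \leq \ldots \leq w_d} \, \d w_1 \cdots \d w_d,
\]
the exponent $(N-1-d-1)/2$ matching the statement.

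\textbf{Step 3 (Radial/angular disintegration).} To pass from $Z^T Z$ to $NW/\Tr(NW)$, I would perform the change of variables $w_i = t l_i$ with $t \eqdef \sum_j w_j > 0$ and $\sum_i l_i = 1$. Using the disintegration of Lebesgue measure on $\R^d_+$ into a radial part $\d t$ and the euclidean surface measure on $\{\sum l_i = t\}$ with a Jacobian scaling as $t^{d-1}$, and using the homogeneities $\prod_i w_i^{(N-d-2)/2} = t^{d(N-d-2)/2}\prod_i l_i^{(N-d-2)/2}$ and $\prod_{i<j}(w_j-w_i) = t^{d(d-1)/2}\prod_{i<j}(l_j-l_i)$, the joint density factorizes. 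The only $t$-dependent factor after collecting powers is $t^{(N-1)d/2 -1} e^{-t/2}$, so $t$ is independent of $(l_1,\ldots,l_d)$ and $\chi^2_{(N-1)d}$-distributed; this independence is exactly what allows one to drop $t$ and read off the conditional density~\eqref{eq:wish_ens} of the normalized eigenvalues.

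\textbf{Step 4 (Normalization constant).} Finally, $Z(N-1,d)$ is obtained by integrating the density $\prod_i l_i^{(N-d-2)/2}\prod_{i<j}(l_j-l_i)$ against the euclidean volume on the simplex $\{\sum l_i = 1\}$; this is a Selberg-type integral whose closed form gives the Gamma-function product displayed in the lemma. The main (mild) obstacle is the careful bookkeeping of these normalizations, including the factor $\sqrt{d}$ arising from the induced metric on the simplex and the various $\pi^{d/2}$ and half-integer Gamma terms coming from the Weyl formula; the geometric content of the proof lies entirely in Step 1, which is a routine application of isotropy.
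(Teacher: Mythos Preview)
Your proof is correct and follows essentially the same approach as the paper: realize $U$ as the normalization of a centered Gaussian sample (the paper invokes Cochran's theorem where you write out the projector $P=Q Q^T$), identify the resulting sample covariance as Wishart $\calW_d(N-1)$, and then pass to the normalized eigenvalues. The paper is actually terser than you are --- its Step~(ii) simply cites the classical random-matrix literature for the Wishart eigenvalue density and the normalization constant, whereas you spell out the radial/angular disintegration and the Selberg-type evaluation of $Z(N-1,d)$.
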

\begin{proof}
Step~(i). The point is to rewrite the distribution of $\bracket{U \otimes U}_N $ as a rescaled Wishart distribution, using a sample co-variance matrix associated to normal idependent random variables. For this purpose, denote:
\[
(G_{(1)}, \cdots , G_{(N)}) \sim \calN(d,N),
\]
a normal random matrix of size $d \times N$ with centered and normalized i.i.d. entries. Then Cochran's theorem ensures that
\[
(G_{(1)} - \bracket{G}_N, \cdots , G_{(N)} - \bracket{G}_N),
\]
is a normal vector with identity co-variance in the sub-vector space defined by $\bracket{g}_N = 0$, so that 
\[
 \frac{1}{\bracket{\abs{ G- \bracket{G}_N }^2}_N}  (G_{(1)} - \bracket{G}_N, \cdots , G_{(N)} - \bracket{G}_N) \in \S^{dN-d-1},
\]
is uniformly distributed.

On the other hand,  $\bracket{ \p{ G- \bracket{G}_N} \otimes \p{ G- \bracket{G}_N} } $ (called a sample co-variance of a multivariate normal distribution), and is well-known to be distributed according to a Wishart distribution of dimension $d$ with $N-1$ degrees of freedom, denoted $\calW_d(N-1)$ see for instance~\cite{And58}. 

As a consequence, the spectrum of $\bracket{U \otimes U}_N$, $0 \leq L^N_1 \leq \ldots \leq L^N_d \leq 1$, is distributed according to
\[
 0 \leq \frac{M^N_1}{\sum_i M^N_i} \leq \ldots \leq \frac{M^N_d}{\sum_i M^N_i} \leq 1 ,
\]
where $0 \leq M^N_1 \leq \ldots \leq M^N_d \leq 1$ is the spectrum of a Wishart distribution $\calW_d(N-1)$. 

Step~(ii). The spectrum of Wishart distributions, and related quantities, can be computed explicitly. It is a classical topic of random matrix theory (see e.g.~\cite{And58,AndGuiZei10,SchKriChat73}.
\end{proof}
This leads to the following property
\begin{Lem}\label{lem:wish_int}
  For any $p \geq 1 $, and $L_d^N$ as in Lemma~\ref{lem:wish}. Then,
\[
\frac{d-1}{d} \geq \E \p{ (1 - L^N_d )^{- p}}^{-1 / p} \mathop{\sim}_{N \to + \infty} \frac{d-1}{d}.
\]
\end{Lem}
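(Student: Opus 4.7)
The upper bound $\frac{d-1}{d} \geq \E((1-L_d^N)^{-p})^{-1/p}$ follows directly from the trace constraint, with no probabilistic input needed. Since $\bracket{\pN{U} \otimes \pN{U}}_N$ is a $d \times d$ symmetric positive matrix of trace $\bracket{\abs{U}^2}_N = 1$, at least one of its eigenvalues is $\geq 1/d$, so $L_d^N \geq 1/d$ almost surely. Therefore $1 - L_d^N \leq (d-1)/d$ a.s., and raising to the $-p$-th power before integration gives the desired bound.

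For the asymptotic equivalence, my plan is to combine convergence in probability with uniform integrability. Convergence in probability comes directly from the Wishart representation established in the proof of Lemma~\ref{lem:wish}: in distribution, $L_i^N = M_i^N / \sum_j M_j^N$ where $(M_j^N)_{j=1}^d$ are the ordered eigenvalues of $\calW_d(N-1)$, realized as the sample covariance of $N-1$ i.i.d. standard Gaussian vectors in $\R^d$. The strong law of large numbers then yields $\tfrac{1}{N-1} \calW_d(N-1) \to \Id_d$ almost surely, so $M_j^N/(N-1) \to 1$ for each $j$, giving $L_d^N \to 1/d$ and $1 - L_d^N \to (d-1)/d$ in probability.

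To promote this to convergence of the negative moment $\E((1-L_d^N)^{-p})$, I would establish uniform integrability of the family $((1-L_d^N)^{-p})_{N \geq N_0}$. Since $1 - L_d^N = \sum_{i<d} L_i^N$, the issue is quantifying how fast the smaller eigenvalues can cluster near zero. The explicit joint density~\eqref{eq:wish_ens} of the $(L_i^N)$ on the simplex contains the factor $\prod_i l_i^{(N-2-d)/2}$, which strongly suppresses mass near any hyperplane $\{l_i = 0\}$ with an exponent growing linearly in $N$. Integrating this density over the region $\{l_d \geq 1-\eps\}$, on which necessarily $l_i \leq \eps$ for every $i < d$, should yield $\P(1 - L_d^N \leq \eps) = O(\eps^{\alpha(N)})$ with $\alpha(N) \to +\infty$. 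This gives $\sup_{N \geq N_0} \E((1 - L_d^N)^{-q}) < + \infty$ for any $q > p$ and some $N_0 = N_0(q)$, hence uniform integrability, hence convergence of the $-p$-th moment to the deterministic limit $((d-1)/d)^{-p}$.

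The main technical step is this uniform integrability estimate, which reduces to controlling a multivariate Beta-type integral on the rescaled simplex $\{\sum_{i<d} l_i = 1-l_d\}$; alternatively, it can be deduced from classical sharp bounds on the lower tail of the smallest Wishart eigenvalue $M_1^N$ combined with the almost sure lower bound $\sum_j M_j^N \geq M_d^N \geq (N-1)(1 - o(1))$, which is available through concentration of the Wishart trace. Either approach is direct but requires some care in tracking the $N$-dependence of the constants, which is the only delicate point.
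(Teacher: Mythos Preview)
Your upper bound argument is correct and is exactly what the paper leaves implicit. For the asymptotic equivalence your strategy (convergence in probability via the Wishart law of large numbers, plus uniform integrability from the $\prod_i l_i^{(N-2-d)/2}$ suppression in the density) is sound, but the paper takes a shorter and more explicit route. Instead of separating limit and integrability, the paper bounds $\E\p{(1-L_d^N)^{-p}}$ directly in closed form: by AM--GM applied to $1-L_d^N=\sum_{i<d}L_i^N$,
\[
(1-L_d^N)^{-p}\leq (d-1)^{-p}\prod_{i=1}^{d-1}(L_i^N)^{-p/(d-1)}=(d-1)^{-p}(L_d^N)^{p/(d-1)}\prod_{i=1}^{d}(L_i^N)^{-p/(d-1)},
\]
and the factor $\prod_i l_i^{-p/(d-1)}$ is absorbed into the explicit density~\eqref{eq:wish_ens} by shifting the exponent $(N-2-d)/2$. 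The expectation thus becomes a ratio of normalizing constants $Z(N_{\rm eff}-1,d)/Z(N-1,d)$ (with effective sample size $N_{\rm eff}=N-2p/(d-1)$) times a bounded moment of $L_d^{N_{\rm eff}}$, and Stirling's formula gives the limit $\p{d/(d-1)}^{p}$. This approach yields an explicit finite-$N$ upper bound in one stroke, which is consistent with the paper's emphasis on explicitable constants; your route is more conceptual and would survive densities without the product structure, but here it leaves the key integral estimate as a sketch where the paper has an exact computation.
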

\begin{proof}
  By Jensen's inequality
\[
(1 - L^N_d )^{- p} =( L^N_2 + \cdots + L^N_d )^{- p}  \leq \frac{1}{(d-1)^{p}}\underbrace{\prod_{i=2}^d (L_i^N)^{-p/(d-1)}}_{= (L_d^N)^{p/(d-1)}\prod_{i=1}^d (L_i^N)^{-p/(d-1)}},  
\]
so that, using the explicit formula~\eqref{eq:wish_ens},
\[
\E \p{ (1 - L^N_d )^{- p}} \leq  \frac{Z(N_{\rm eff},d)}{Z(N,d)} \E \p{(L_d^{N_{\rm eff} })^{ p/(d-1)}},
\]
where $N_{\rm eff} = N -2p / (d-1) $ is an effective sample size. Then by dominated convergence
\[
\E \p{(L_d^{N_{\rm eff} })^{ p/(d-1)}}  \limop{\to}_{N \to + \infty} d^{-p/(d-1)};
\]
and by Stirling's formula:
\[
\frac{ \Gamma\p{(N-1)d/2}}{ \Gamma\p{(N_{\rm eff}-1)d/2}} \mathop{\sim}_{N \to + \infty} \p{(N-1)d/2}^{(N-N_{\rm eff})d/2},
\]
as well as
\[
\frac{ \Gamma\p{(N-1)/2 -i/2}}{ \Gamma\p{(N_{\rm eff}-1)/2 -i/2}} \mathop{\sim}_{N \to + \infty} \p{(N-1)/2}^{(N-N_{\rm eff})/2},
\]
so that
\[
\frac{Z(N_{\rm eff}-1,d)}{Z(N-1,d)} \limop{\to}_{N \to + \infty}  d^{p d/(d-1)}.
\]
The result follows.
\end{proof}

\subsection{Proof of Theorem~\ref{th:main}}\label{sub:proof_main}



{\bf Step 1: Initial condition.} Let $\pi_t$ denotes the distribution of the Kac's particle system at time $t \geq 0$. We fix $t \geq 0$, and following Lemma~\ref{lem:couplrep}, we take as a new initial condition
\[
\p{\opt{U}_{t,(1)}, \opt{V}_{t,(1)}, \ldots , \opt{U}_{t,(N)}, \opt{V}_{t,(N)} } = \p{U_{t,(\Sigma(1))}, {V}_{t,(\Sigma(1))}, \ldots ,{U}_{t,(\Sigma(N))}, {V}_{t,(\Sigma(N))} }  \in (\R^d \times \R^d)^N,
\]
where $(U_t,V_t,\Sigma) \in (\R^d\times \R^d)^N \times {\rm Sym}_N$ is a representative of the $d_{{\rm sym}, W_2}$-optimal coupling between $\pi_\infty = \Law(U_t)$ and $\pi_t = \Law(V_t)$. We denote $\opt{\pi}_t = \Law(\opt{V}_t)$ and remark that although $\pi_t = \tilde{\pi}_t$, the distributions $ \opt{\pi}_t $ and $\pi_t $ have the same permutation invariant moments or observables. Finally, Lemma~\ref{lem:couplrep} implies the positive correlation assumption $\bracket{\opt{U}_{t} \cdot \opt{V}_{t} }_N \geq 0 \as $.

{\bf Step 2: Propagation.} Let $b_0 = \int \beta(\d \theta) < + \infty$ a given angular cut-off. Consider the solution $h \mapsto (\opt{U}_{t+h},\opt{V}_{t+h})$ of the coupled Kac's particle system with the initial condition~$(\opt{U}_{t}, \opt{V}_{t} )$ and angular cut-off $b_0$. We get:
\begin{align*}
  d_{{\rm sym},W_2} \pare{\pi_{t} ,\pi_\infty}^2 = \E \bracket{ \abs{\opt{U}_{t}-\opt{V}_{t}}^2 }_N = \E \bracket{ \abs{\opt{U}_{t+h}-\opt{V}_{t+h}}^2 }_N  + \int_{0}^{h} \E \p{ \cc \p{\opt{U}_{t+h'},\opt{V}_{t+h'}} } \d h' .
\end{align*}
 Next, by definition of the Wasserstein distance:
\[
\E \bracket{ \abs{\opt{U}_{t+h}-\opt{V}_{t+h}}^2 }_N \geq d^2_{W_2} \pare{\opt{\pi}_{t+h} ,\pi_\infty};
\]
Now, since the coupling distance is almost surely decreasing (and energy is conserved), the positive correlation condition propagtes so that 
\[
\bracket{\opt{U}_{t+h'} \cdot \opt{V}_{t+h'} }_N \geq 0 \as \quad \forall h' \geq 0,
\] 
and we can apply Lemma~\ref{lem:hold_0} to get
\begin{align*}
  \label{eq:5}
  \E \p{ \cc \p{\opt{U}_{t+h'},\opt{V}_{t+h'}} } & \geq 2 c_{\delta,q,N}(\opt{\pi}_{t+h'},\pi_\infty) \p{  \E \bracket{ \abs{\opt{U}_{t+h'}-\opt{V}_{t+h'}}^2 }_N }^{1 +1/2\delta} \\
& \geq 2 c_{\delta,q,N}(\opt{\pi}_{t+h'},\pi_\infty)  d_{W_2} \pare{\opt{\pi}_{t+h} ,\pi_\infty} ^{2 +1/\delta}.
\end{align*}

Then, it is possible take the limit $b_0 \to +\infty $ of vanishing cut-off in the above inequality. Indeed, it is known (see for instance~\cite{EthKur85}) that any well-defined Makov process (including diffusion and Levy processes) on a manifold can be approximated by a bounded jump process, in the sense of convergence of distribution on trajectory (Skorokhod) space. As a consequence, we can assume that if $\opt{\pi}_t$ is a solution without angular cut-off, it is possible to construct a sequence $\opt{\pi}^{b_{0,n}}_t$ with angular cut-off such that $\opt{\pi}^{b_{0,n}}_t \to \opt{\pi}_t$ in distribution. Since the Wasserstein distance $d_{W_2}$ metrizes weak convergence, the state space $\S^{Nd-N-1}$ is compact, and moments are continuous observables, it is possible to remove the angular cut-off in the above inequality.

Finally, using the inequality
\[
d_{{\rm sym},W_2} \pare{\pi_{t+h} ,\pi_\infty} = d_{{\rm sym},W_2} \pare{\opt{\pi}_{t+h} ,\pi_\infty} \leq d_{W_2} \pare{\opt{\pi}_{t+h} ,\pi_\infty} ,
\]
we obtain the result:
\[
\frac{\d }{\d t} d^2_{{\rm sym},W_2} \pare{\pi_{t} ,\pi_\infty} \leq - c_{\delta,q,N}(\pi_t,\pi_\infty) d^2_{{\rm sym},W_2} \pare{\pi_{t} ,\pi_\infty} .
\]

{\bf Step 3: $N$-uniform control of the constant.}

Let us introduce the notation
\[
c_{\delta,q,N}(\pi_t,\pi_\infty) = k_{\delta , q , N} \E \p{  \bracket{ \abs{\pN{V_t}}^{2q(1+\delta)} }_N }^{-1/2 q \delta} ,
\]
with 
\begin{align}
k_{\delta , q , N} = {2^{-9/2-2\delta}} \frac{d-2}{d-1} \frac{\p{ 1 + \delta}^{1 + 1/\delta}}{\p{ 1 + 2 \delta}^{1+1/2\delta}}  \times \E \p{ \kappa_{ \bracket{\pN{U} \otimes \pN{U}}_N} ^{p(1+2\delta)} \bracket{ \abs{\frac{U-U_\ast}{\sqrt{2}}}^{2p(1+\delta)} }_N }^{-1/2p\delta} , \label{eq:k_main}
\end{align}
where $U$ is distributed uniformly on sphere $\S^{Nd-N-1}$. Using the integrability property in Lemma~\ref{lem:wish_int}, and the well-known convergence with large dimension of the uniform distribution on spheres towards Gaussian distributions, we obtain:
\[
\lim_{N \to + \infty} \E \p{ \kappa_{ \bracket{\pN{U} \otimes \pN{U}}_N} ^{p(1+2\delta)} \bracket{ \abs{\frac{U-U_\ast}{\sqrt{2}}}^{2p(1+\delta)} }_N }^{-1/2p\delta} = \p{ \frac{d-1}{d} }^{1+1/2\delta}
\E (\abs{G_d}^{2p(1+\delta)})^{-1/2p \delta}.
\]
where $G_d$ a $d$-dimensional centered normal distribution with $\E (\abs{G_d}^2)=1$. The result follows.

\subsection{Order~$4$ moment control}
In this section, the evolution of the radial order~$4$ moment of the Kac's particle system is calculated.
\[
(v,v_\ast,v',v'_\ast) \in (\R^d)^4
\]
denotes a solution of the collision mapping~\eqref{eq:collision}, and we will use again the following quantities:
\begin{align*}
  \begin{cases}
\dps n_v \eqdef (v-v_\ast)/\abs{v-v_\ast} , \\[6pt]
 \dps    2 s_v \eqdef v + v_\ast  , \\[6pt]
\dps 2 d_v \eqdef v - v_\ast . \\[6pt]
  \end{cases}
\end{align*}
 By Lemma~\eqref{lem:azim}, it is possible to: (i) pick $(n_v,m_v) \in (S^{d-1})^2$, an orthonormal pair such that $s_v \in \Span\p{n_v,m_v} $; (ii) consider spherical coordinates such that:
\[
n'_v = \cos \theta \, n_v + \sin \theta \cos \ph \, m_v + \sin \theta \sin \ph \,  l_v \in \S^{d-1},
\]
where $l_v \in \S^{d-1} \cap \Span\p{n_v,m_v}^{\perp} $; (iii) and write the collision kernel in $(\theta,\ph,l_v)$-coordinates as:
\[
b(n_v, \d n_v') \equiv {\rm unif}_{\S^{d-1} \cap \Span\p{n_v,m_v}^{\perp}  }(\d l_v) \sin^{d-3}(\ph) \frac{\d \ph}{ w_{d-3}} \beta( \d \theta).
\]

\begin{Lem} Under the normalized Levy condition~\eqref{eq:Levy}, the the post-collisional order~$4$ radial  moment satisfies:
  \begin{align}\label{eq:mom4}
\Delta_4(v,v_\ast )&\eqdef \frac{1}{2}\int_{\S^{d-1}} \abs{v'}^4+ \abs{v'_\ast}^4 - \abs{v}^4 - \abs{v_\ast}^4 b(n_v, \d n_v') \nonumber \\
& = - \frac14 \p{ \abs{v}^4 + \abs{v_\ast}^4 } + \frac{d+1}{2(d-1)}\abs{v}^2 \abs{v_\ast}^2 - \frac{1}{d-1} \p{v \cdot v_\ast }^2.
  \end{align}
\end{Lem}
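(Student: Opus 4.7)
My plan is to do this by direct computation using the Carleman parametrization already fixed in the statement. Writing $r := \abs{v-v_\ast}/2 = \abs{d_v}$, the collision mapping gives $v' = s_v + r n'_v$ and $v'_\ast = s_v - r n'_v$, while the pre-collisional pair satisfies $v = s_v + r n_v$ and $v_\ast = s_v - r n_v$. Expanding $\abs{v'}^2 = \abs{s_v}^2 + 2 r \, s_v \cdot n'_v + r^2$ (and similarly for the other three quantities), squaring, and using that the cross terms in $s_v \cdot n'_v$ cancel between $\abs{v'}^4$ and $\abs{v'_\ast}^4$, one finds the key identity
\[
\abs{v'}^4 + \abs{v'_\ast}^4 - \abs{v}^4 - \abs{v_\ast}^4 = 8 r^2 \p{ (s_v \cdot n'_v)^2 - (s_v \cdot n_v)^2 }.
\]

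Next I would plug in the angular decomposition supplied just before the lemma. Since $(n_v, m_v)$ is chosen so that $s_v \in \Span(n_v,m_v)$, we may write $s_v = a \, n_v + b \, m_v$ with $a = s_v \cdot n_v$ and $a^2 + b^2 = \abs{s_v}^2$; then $s_v \cdot l_v = 0$, and
\[
s_v \cdot n'_v = a \cos\theta + b \sin\theta \cos\ph.
\]
Integrating against $\sin^{d-3}\ph \, \d\ph / w_{d-3}$ kills the cross term $2ab \cos\theta \sin\theta \cos\ph$ (by the $\ph \mapsto \pi - \ph$ symmetry), while $\int_0^\pi \cos^2\ph \, \sin^{d-3}\ph \, \d\ph / w_{d-3} = 1 - w_{d-1}/w_{d-3} = 1/(d-1)$ from the Wallis recursion. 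Hence
\[
\int (s_v \cdot n'_v)^2 \, b(n_v,\d n'_v) = \int \Bigl[ a^2 \cos^2\theta + \frac{b^2}{d-1} \sin^2\theta \Bigr] \beta(\d\theta),
\]
and subtracting $(s_v \cdot n_v)^2 = a^2$ and invoking the Levy normalization $\int \sin^2\theta \, \beta(\d\theta) = 1$ collapses the $\theta$ integral to
\[
\int \p{ (s_v \cdot n'_v)^2 - (s_v \cdot n_v)^2 } b(n_v,\d n'_v) = -a^2 + \frac{b^2}{d-1}.
\]
Combining with the first identity gives $\Delta_4(v,v_\ast) = -4 r^2 a^2 \, \tfrac{d}{d-1} + \tfrac{4 r^2}{d-1} \abs{s_v}^2$, after using $b^2 = \abs{s_v}^2 - a^2$.

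The last step is purely algebraic: substitute $4 r^2 a^2 = (\abs{v}^2-\abs{v_\ast}^2)^2 / 4$ (since $a = (\abs{v}^2-\abs{v_\ast}^2)/(2\abs{v-v_\ast})$) and $4 r^2 \abs{s_v}^2 = \abs{v-v_\ast}^2 \abs{v+v_\ast}^2 / 4 = [(\abs{v}^2 + \abs{v_\ast}^2)^2 - 4(v\cdot v_\ast)^2]/4$, use $(\abs{v}^2 - \abs{v_\ast}^2)^2 = (\abs{v}^2 + \abs{v_\ast}^2)^2 - 4 \abs{v}^2 \abs{v_\ast}^2$, and regroup to land on $-\tfrac14(\abs{v}^4 + \abs{v_\ast}^4) + \tfrac{d+1}{2(d-1)} \abs{v}^2 \abs{v_\ast}^2 - \tfrac{1}{d-1}(v\cdot v_\ast)^2$. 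I do not expect any genuine obstacle here; the only place where a sign or combinatorial slip is easy is in the final regrouping of the three quadratic pieces, so I would double-check that step by evaluating on a convenient special case (e.g.\ $v_\ast = 0$, which must give $\Delta_4 = -\tfrac14 \abs{v}^4 + \tfrac{d-2}{4(d-1)} \abs{v}^4$... let me recompute: with $v_\ast=0$, the formula gives $-\tfrac14 \abs{v}^4$, and one checks directly that $\Delta_4$ reduces to $4 r^2 (- a^2 \tfrac{d}{d-1} + \tfrac{b^2}{d-1}) = -\abs{v}^4$ since then $s_v = v/2$, $n_v = v/\abs{v}$, $a = \abs{v}/2$, $b=0$, giving $-\abs{v}^4 \cdot \tfrac{d}{d-1} \cdot \tfrac14 \cdot 4 = -\tfrac{d}{d-1}\abs{v}^4/... $; so I would verify that arithmetic carefully when writing the full proof).
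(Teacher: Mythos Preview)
Your proposal is correct and is precisely the ``straightforward calculation'' the paper invokes (the paper gives no argument beyond that phrase and a reference to Ikenberry--Truesdell). Every step checks out: the identity $\abs{v'}^4+\abs{v'_\ast}^4-\abs{v}^4-\abs{v_\ast}^4 = 8r^2\bigl((s_v\cdot n'_v)^2-(s_v\cdot n_v)^2\bigr)$, the $\varphi$-average via the Wallis ratio, the collapse through the L\'evy normalization, and the final algebra.

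Your sanity check at $v_\ast=0$ actually works fine once the arithmetic is done cleanly: there $b=0$, so $\Delta_4 = 4r^2(-a^2+0) = -4\cdot\tfrac{\abs{v}^2}{4}\cdot\tfrac{\abs{v}^2}{4} = -\tfrac14\abs{v}^4$, matching the formula. (In your $\abs{s_v}^2$-form, the $-\tfrac{d}{d-1}$ and $+\tfrac{1}{d-1}$ contributions combine to $-1$ since $\abs{s_v}^2=a^2$.) So there is no issue; just finish the regrouping as you outlined.
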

\begin{proof}
 Straightforward calculation (that can be double-checked using the stationarity under the Boltzmann kernel $b$ of product Gaussian distributions of the form $\calN( \d v) \otimes \calN(\d v_\ast) $). See also the classical paper~\cite{IkeTru56} for a general treatment of moments for Maxwell molecules.
\end{proof}
It then follows:
\begin{Lem}
  Let $(V_t)_{t \geq 0} \in \p{\R^{d}}^N$ a centered and normalized conservative Kac's particle system. Then for any $t \geq 0$:
  \begin{equation*}
    \E\bracket{\abs{V_t}^4}_N \leq \e^{-  t / 2 } \p{ \E\bracket{\abs{V_0}^4}_N -\frac{d+2}{d} } + \frac{d+2}{d}.  
  \end{equation*}
Moreover, denoting (convention: $ \ln a = -\infty$ if $a < 0$) :
\[
t_\ast \eqdef  2 \p{\ln \p{ \frac{d}{d+2} \E\bracket{\abs{V_0}^4}_N - 1}  }^+
\]
then for any $\gamma , t >0$:
  \begin{equation}
    \label{eq:cons4}
    \int_0^{t} \p{ \E \bracket{\abs{V_s}^4}_N }^{-\gamma} \d s \geq \p{\frac{2d+4}{d} }^{-\gamma}  \p{ t- t_\ast}^+ .
  \end{equation}
\end{Lem}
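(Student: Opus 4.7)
The plan is to use the generator $\calL_N$ acting on the observable $v \mapsto \bracket{\abs{v}^4}_N$, derive a closed linear ODE for $t \mapsto \E\bracket{\abs{V_t}^4}_N$ using the conservation laws, solve it, and then use the resulting bound on a tail window $t \geq t_\ast$ to lower bound the integral.

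First, since $\bracket{\abs{v}^4}_N$ is an observable of particle-average type, the generator formula established in the paper gives $\calL_N \bracket{\abs{v}^4}_N = \bracket{ L\p{\abs{v}^4 \otimes \one{}}(v,v_\ast)}_N = \bracket{\Delta_4(v,v_\ast)}_N$ with $\Delta_4$ given by~\eqref{eq:mom4}. Then I apply the particle-average operator $\bracket{\,\cdot\,}_N$ to~\eqref{eq:mom4} and exploit the conservation laws~\eqref{eq:cons}: the cross term gives $\bracket{\abs{v}^2 \abs{v_\ast}^2}_N = \bracket{\abs{v}^2}_N^2 = 1$, and the quadratic inner product term gives $\bracket{(v\cdot v_\ast)^2}_N = \Tr\p{\bracket{v \otimes v}_N^2}$, the Frobenius norm squared of the trace-$1$ symmetric positive semidefinite matrix $\bracket{v\otimes v}_N$. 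Using the elementary spectral lower bound $\Tr(S^2) \geq (\Tr S)^2 / d = 1/d$ for such $S$, I obtain
\[
\calL_N \bracket{\abs{v}^4}_N \leq -\tfrac{1}{2}\bracket{\abs{v}^4}_N + \tfrac{d+1}{2(d-1)} - \tfrac{1}{d(d-1)} = -\tfrac{1}{2}\bracket{\abs{v}^4}_N + \tfrac{d+2}{2d},
\]
after simplifying $(d^2+d-2)/(2d(d-1)) = (d+2)/(2d)$.

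Second, taking expectations and applying Gronwall (equivalently, computing the derivative of $t \mapsto \e^{t/2}\p{\E\bracket{\abs{V_t}^4}_N - (d+2)/d}$ to see it is non-increasing) yields the first inequality
\[
\E\bracket{\abs{V_t}^4}_N - \tfrac{d+2}{d} \leq \e^{-t/2}\p{\E\bracket{\abs{V_0}^4}_N - \tfrac{d+2}{d}}.
\]

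Third, to obtain the integral lower bound, I distinguish two cases. If $\E\bracket{\abs{V_0}^4}_N \leq (d+2)/d$, then $t_\ast = 0$ and the exponential decay bound shows $\E\bracket{\abs{V_t}^4}_N \leq (d+2)/d \leq (2d+4)/d$ for all $t \geq 0$. Otherwise, solving $\e^{-t/2}(\E\bracket{\abs{V_0}^4}_N - (d+2)/d) \leq (d+2)/d$ for $t$ yields precisely $t \geq 2\ln\p{\tfrac{d}{d+2}\E\bracket{\abs{V_0}^4}_N - 1} = t_\ast$, after which $\E\bracket{\abs{V_t}^4}_N \leq 2(d+2)/d$. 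Restricting the integration domain to $[t_\ast, t]$ and inserting this uniform bound gives~\eqref{eq:cons4}.

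The whole argument is computationally routine; the only mildly delicate step is the spectral bound $\bracket{(v \cdot v_\ast)^2}_N \geq 1/d$, which requires recognizing the particle-average as a Frobenius norm and using $\Tr(S) = \bracket{\abs{v}^2}_N = 1$. Note also that the right-hand side $(d+2)/d$ appearing in the exponential bound is exactly the radial order $4$ moment of the equilibrium $\pi_\infty = {\rm unif}_{\S^{Nd-d-1}}$ in the large-$N$ limit (since uniform on the sphere becomes Gaussian), which provides a useful consistency check of the computation.
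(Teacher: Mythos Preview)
Your proof is correct and follows essentially the same route as the paper: compute $\frac{\d}{\d t}\E\bracket{\abs{V_t}^4}_N$ via the generator and the explicit $\Delta_4$ formula~\eqref{eq:mom4}, use the spectral lower bound $\Tr\p{\bracket{v\otimes v}_N^2} \geq 1/d$ (the paper invokes Cauchy--Schwarz for the same purpose), apply Gronwall, and then restrict the integral to $[t_\ast,t]$ where the moment is bounded by $2(d+2)/d$. Your identification of the spectral step as the only non-trivial ingredient, and your Gaussian consistency check on $(d+2)/d$, are both apt.
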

  \begin{proof}
    The order~$4$ formula~\eqref{eq:mom4} implies that
    \begin{align*}
 \frac{\d}{\d t} \E \bracket{\abs{V_t}^4}_N &= \bracket{\Delta_4(V_t,V_{t,\ast})}_N \\
&= - \frac12 \E \bracket{\abs{V_t}^4}_N + \frac{d+1}{2(d-1)}  - \frac{1}{d-1} \E  \underbrace{ \Tr\p{ \bracket{V_t \otimes V_t }^2_N } }_{ \dps \mathop{\geq}^{\text{Cauchy-Schwarz}} \Tr (\Id)= d  } \\
& \leq - \frac12 \E \bracket{\abs{V_t}^4}_N  + \frac{d+2}{2d},
    \end{align*}
so that the first result follows by Gronwall's argument.

Now if $t_\ast =0$, then
$
\E\bracket{\abs{V_0}^4}_N  \leq \frac{2d+4}{d} .
$
Otherwise, since $\e^{- t_\ast / 2 } \p{ \E\bracket{\abs{V_0}^4}_N -\frac{d+2}{d} } = \frac{d+2}{d}$ we obtain
\[
\E \bracket{\abs{V_t}^4}_N \leq \frac{2d+4}{d}.
\] 
The result follows.
\end{proof}

\subsection{Proof of Proposition~\ref{pro:order4}}
We can then apply Gronwall's lemma to Theorem~\ref{th:main} with the estimate~\eqref{eq:cons4}. Since $2\delta(1+p) =4 $, we take $\gamma=1/2p\delta =  1/4+1/4\delta $, and find:
\begin{align*}
  c_{\delta,N} = k_{\delta,\frac{2}{1-\delta},N} \p{\frac{2d+4}{d} }^{-1/2 -1/2\delta },
\end{align*}
where in the above $k_{\delta,\frac{2}{1-\delta},N} $ is defined by~\eqref{eq:k_main}. The result follows.






\section*{Acknowledegement}
I thank C. Mouhot, N. Fournier, F. Bolley, and A. Guillin for their helpful comments.
\bibliographystyle{plain}
\bibliography{boltzmann}
\end{document}